\documentclass[a4paper,12pt]{article}
\usepackage[english]{babel}
\usepackage{hyperref}
\usepackage{amssymb,amsmath,amsthm}
\usepackage{enumerate,enumitem}
\usepackage{graphicx,color}
\usepackage{float}
\usepackage{epsfig}
\usepackage{tikz}
\usetikzlibrary{calc,decorations.pathmorphing,decorations.text,decorations.markings,matrix}
\usepackage{caption,subcaption}
\usepackage{refcount}
\usepackage[hmargin=2.5cm,vmargin=2.5cm]{geometry}
\usepackage{thmtools, mathtools, braket}
\usepackage[normalem]{ulem}
\usepackage{authblk}
\usepackage{xcolor} 
\usepackage{natbib}
\usepackage[nameinlink]{cleveref}
\usepackage{booktabs}
\usepackage{array}

\theoremstyle{plain}
\newtheorem{thm}{Thm}[section]

\newtheorem{claim}{Claim}[section]
\newtheorem{theorem}[thm]{Theorem}
\newtheorem{lemma}[thm]{Lemma}
\newtheorem{corollary}[thm]{Corollary}

\newtheorem{conjecture}[thm]{Conjecture}

\newenvironment{proof*}{\noindent \emph{Proof of the claim.}}{\hfill$\Diamond$}

\begin{document}

\title{Positive and negative $3$-energies of graphs}
    \author[1]{Zhengbo Chen}
    \author[2]{Zhouningxin Wang}
    \author[3]{Xiao-Dong Zhang}

	\affil[1]{\small School of Mathematical Sciences, Shanghai Jiao Tong University, Shanghai 200240, China}
    \affil[2]{\small School of Mathematical Sciences and LPMC, Nankai University, Tianjin 300071, China}
    \affil[3]{\small School of Mathematical Sciences, MOE-LSC, SHL-MAC, Shanghai Jiao Tong University, Shanghai 200240, China
	\linebreak Email:\{czb911, xiaodong\}@sjtu.edu.cn;
    wangzhou@nankai.edu.cn.}

\date{ }	
\maketitle

\begin{abstract}
For a simple $n$-vertex graph $G$, let $A_G$ be the adjacency matrix of $G$ and let $\lambda_{1}(G) \geq \lambda_{2}(G)\geq  \cdots \geq \lambda_{n}(G)$ denote the eigenvalues of $A_G$. For an integer $p\geq 2$, the positive $p$-energy and negative $p$-energy of a graph $G$, denoted $\mathcal{E}^+_p(G)$ and $\mathcal{E}^+_p(G)$, are defined as follows: $\mathcal{E}^+_p(G)=\sum_{\lambda_{i}(G)>0} |\lambda_{i}(G)|^{p}$ and $\mathcal{E}^-_p(G)=\sum_{\lambda_{i}(G)<0}|\lambda_{i}(G)|^{p}$. Q. Tang, Y. Liu, and W. Wang, and S. Akbari, H. Kumar, and B. Mohar, two groups conjectured that for any integer $p\geq 2$, every connected $n$-vertex graph $G$ satisfies that $\mathcal{E}^+_p({G}) \geq \mathcal{E}^+_p({P_{n})}$ and $\mathcal{E}^-_p({G}) \geq \mathcal{E}^-_p({K_{n})}$. The negative $p$-energy conjecture is so-far known to be true for $p \geq 4$ [Linear Algebra Appl., 724:96–107, 2025]. Improving upon this result, in this paper we establish the following two main results: 
(1) For any integer $p\geq 3$, every connected $n$-vertex graph $G$ satisfies $\mathcal{E}^-_p({G}) \geq \mathcal{E}^-_p({K_{n})}$. 
(2) Every connected $n$-vertex graph, other than $K_1$, $K_2$, and $P_3$, satisfies $\mathcal{E}_3^{+}(G)\geq\frac{\sqrt{5}}{2}n$.
\end{abstract}

\maketitle

\section{Introduction}
Let $A_G$ be the adjacency matrix of a simple graph $G$. Let $\lambda_{1}(G), \lambda_{2}(G), \ldots, \lambda_{n}(G)$ be the eigenvalues of $A_G$ such that $\lambda_{1}(G) \geq \lambda_{2}(G)\geq  \cdots \geq \lambda_{n}(G)$. 
For an integer $p\geq 2$, the \emph{positive $p$-energy} and \emph{negative $p$-energy} of a graph $G$, denoted $\mathcal{E}^+_p(G)$ and $\mathcal{E}^+_p(G)$, are defined as follows: 
$$\mathcal{E}^+_p(G)=\sum_{\lambda_{i}(G)>0} |\lambda_{i}(G)|^{p},~~~\text{and}~~~
\mathcal{E}^-_p(G)=\sum_{\lambda_{i}(G)<0}|\lambda_{i}(G)|^{p}.$$

Positive and negative $p$-energies are useful parameters related to other graph characteristics, such as chromatic number and fractional chromatic number; We refer interested readers to a recent work~\cite{ELPHICK2026104252}.

\begin{conjecture}[\cite{elphick2016conjectured}]\label{conj p=2}
For every connected $n$-vertex graph $G$, $$\min\{\mathcal{E}_2^+({G}), ~\mathcal{E}_2^-({G})\} \geq n - 1.$$
\end{conjecture}

\Cref{conj p=2} has been verified for many classes of graphs: bipartite graphs, regular graphs, complete $q$-partite graphs, hyper-energetic graphs, and barbell graphs in \cite{elphick2016conjectured}, and extended barbell graphs, unicyclic graphs, graphs with two positive eigenvalues, graphs with a certain fraction of positive or negative eigenvalues, cactus graphs in \cite{Abiad2023}, and $d$-regular disconnected graphs with no clique component in \cite{Elphick2024}. S. Zhang~\cite{Zhang2024} proved that 
$\min\{\mathcal{E}^+_2(G),\, \mathcal{E}^-_2(G)\} \ge n-\gamma,$ where $\gamma$ is the domination number of $G$. In the same paper, Zhang also showed that $
\mathcal{E}^{+}_2(G) \ge m^{6/7 - o(1)}$ and $\mathcal{E}^{-}_2(G) = \Omega(m^{1/2})$ where $m=|E(G)|$ with exponents in both bounds being optimal. The best-known bound towards ~\Cref{conj p=2} was obtained by S. Akbari, H. Kumar, and B. Mohar \cite{akbari2025linear} in 2025, who utilized a super-additivity property of square energies to prove that $$\min\{\mathcal{E}^{+}_2(G), \, \mathcal{E}^{-}_2(G)\} \ge \frac{3n}{4}.$$ Nevertheless, \Cref{conj p=2} remains open and continues to stimulate active research interest.

Let $P_n$ denote the path on $n$ vertices,  $K_n$ denote the complete graph on $n$ vertices, and $K_{1,n}$ denote a star graph with $n$ vertices of degree $1$. 
Note that for each integer $p\geq 2$, $\mathcal{E}_p^-(K_n)=n-1$ and $\mathcal{E}_p^+(P_n)=\sum\limits_{k=1}^{\lfloor\frac{n}{2}\rfloor} 2^p\cos^{p}\!\Big(\frac{k\pi}{n+1}\Big)$; In particular, $\mathcal{E}_2^+(P_n)=n-1$. 
Following the same line of study, the next two conjectures can be seen as natural generalizations of \Cref{conj p=2}.

\begin{conjecture}[\cite{TLW2025}]\label{conj:positive}
Let $n$ and $p$ be two positive integers with $p\geq 2$. For every connected $n$-vertex graph $G$, $$\mathcal{E}^+_p({G}) \geq \mathcal{E}^+_p({P_{n})}.$$
\end{conjecture}

\begin{conjecture}[\cite{AKBARI202596}]\label{conj:negative}
Let $n$ and $p$ be two positive integers with $p\geq 2$. For every connected $n$-vertex graph $G$, $$\mathcal{E}^-_p({G}) \geq \mathcal{E}^-_p({K_{n})}.$$
\end{conjecture}

For connected graphs $G$ and all $p\ge 2$, S. Akbari, H. Kumar, B. Mohar, and S. Pragada~\cite{AKBARI202596} obtained upper bounds on the positive and negative $p$-energies: $\mathcal{E}_p^{+}(G)\le (n-1)^{p}$ and $\mathcal{E}_p^{-}(G)\le (n/2)^{p}$, showing that neither part can grow faster than a fixed polynomial in $n$. In a complementary direction, C. Elphick, Q. Tang, and S. Zhang~\cite{ELPHICK2026104252} conjectured that for every connected $n$-vertex graphs $G$ and every $p$ with $0\le p\le 2$, 
$\min\{\mathcal{E}_p^{+}(G),\, \mathcal{E}_p^{-}(G)\}\ge (n-1)^{p/2},$
and they verified this bound for $p$ with $0\leq p\leq1$. Another conjecture concerning $\mathcal{E}_p(G)=\mathcal{E}_p^{+}(G)+ \mathcal{E}_p^{-}(G)$ proposed by V. Nikiforov~\cite{NIKIFOROV201682} is that, for every tree $T$, $\mathcal{E}_p(K_{1,{n-1}})\le \mathcal{E}_p(T)\le \mathcal{E}_p(P_n)$ holds for $1\le p\le 2$, while $\mathcal{E}_p(P_n)\le \mathcal{E}_p(T)\le \mathcal{E}_p(K_{1,n-1})$ holds for $p>2$; for some related results see ~\cite{Arizmendi2023TheGE,Arizmendi2023}.

In a very recent work \cite{AKBARI202596} by S. Akbari, H. Kumar, B. Mohar, and S. Pragada, \Cref{conj:negative} has been verified for $p \geq 4$. 

\begin{theorem}[\cite{AKBARI202596}]
Let $n$ and $p$ be two positive integers with $p\geq 4$. For every connected $n$-vertex graph $G$, $\mathcal{E}^-_p({G}) \geq \mathcal{E}^-_p({K_{n})}.$
\end{theorem}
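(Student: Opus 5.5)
The statement to prove is that every connected $n$-vertex graph $G$ satisfies $\mathcal{E}^-_p(G)\ge n-1$ for every integer $p\ge 4$. Recall $\mathcal{E}^-_p(K_n)=n-1$.

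My plan is to reduce the problem to $p=4$ by a power-mean/H\"older argument, and then handle $p=4$ with trace identities. For $p>4$ one can also argue directly as follows. Every connected graph with $n\ge 2$ vertices contains $K_2$ as an induced subgraph, so $\lambda_n(G)\le -1$. By Cauchy interlacing applied to induced subgraphs, one might hope to show that the negative eigenvalues are "large". A cleaner route, however, is H\"older's inequality. Write $\mu_1,\dots,\mu_k$ for the absolute values of the negative eigenvalues. Then
$$\sum \mu_i^4\le \Big(\sum\mu_i^2\Big)^{(p-4)/(p-2)}\Big(\sum\mu_i^p\Big)^{2/(p-2)}.$$
This needs $\mathcal{E}_2^-\ge n-1$, which is open. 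So instead I would use monotonicity in $p$ directly whenever each $\mu_i\ge 1$. In general I would split the sum as
$$\sum\mu_i^p\ge \sum_{\mu_i\ge1}\mu_i^p\ge\sum_{\mu_i\ge 1}\mu_i^4,$$
which reduces everything to proving $\sum_{\mu_i\ge1}\mu_i^4\ge n-1$. That truncated statement is what I would aim for.

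\textbf{Key step.} The sum $\sum_i\lambda_i^3$ equals $6t$, where $t$ is the number of triangles, and $\sum_i\lambda_i^4$ counts closed 4-walks. Hence
$$\mathcal{E}_4^+-\mathcal{E}_4^-\ \text{is not controlled, but}\quad \mathcal{E}_3^+-\mathcal{E}_3^-=6t\ge0.$$
I would therefore combine the inequality
$$\mathcal{E}^-_4\ge \frac{(\mathcal{E}^-_3)^2}{\mathcal{E}^-_2}$$
(Cauchy--Schwarz) with lower bounds on $\mathcal{E}^-_3$ and upper bounds on $\mathcal{E}^-_2\le m$.

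Since this looks weak, the main route I would actually follow is induction on $n$ via a vertex-deletion/interlacing argument. Choose a non-cut vertex $v$ (for example a leaf of a spanning tree), so that $G-v$ is connected. Interlacing gives $\lambda_i(G)\le\lambda_i(G-v)$ for each $i$, so the negative eigenvalues of $G$ dominate those of $G-v$ in absolute value, and the count of negative eigenvalues does not drop. This yields $\mathcal{E}^-_p(G)\ge\mathcal{E}^-_p(G-v)\ge n-2$ by induction. The increment of $1$ is then the crux: I would need to show that the negative spectrum gains at least $1$ in $p$-th power. Here I would use the trace identity
$$\sum_i\big(\lambda_i(G)^2-\lambda_i(G-v)^2\big)=2\deg v\ge 2,$$
together with the interlacing structure, to distribute this increase between the positive and negative parts. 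For $p\ge4$, convexity of $x\mapsto x^{p/2}$ with base values $\mu_i\ge1$ should amplify the increase.

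\textbf{Main obstacle.} The hardest step is guaranteeing that enough of the squared-eigenvalue increase lands on the negative side. I would try to bound the positive-side increase using $\lambda_1$ and the eigenvector entry at $v$, choosing $v$ to minimize its Perron entry. Failing that, I would settle the remaining cases with an induced-subgraph argument on $K_{1,2}$ and $K_3$ configurations.
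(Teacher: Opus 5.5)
Your proposal does not contain a proof. Every route you list stops at the step that carries all of the content.

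\textbf{The inductive route.} The bound $\mathcal{E}^-_p(G)\ge\mathcal{E}^-_p(G-v)$ is fine, but the interlacing you need is $\lambda_{i+1}(G)\le\lambda_i(G-v)\le\lambda_i(G)$ (\Cref{thm:alternate}), not $\lambda_i(G)\le\lambda_i(G-v)$. It is the index shift that pairs each negative eigenvalue of $G-v$ with a distinct negative eigenvalue of $G$ of at least the same modulus. The real problem is that the gain of $1$ is the whole theorem, and nothing you write establishes it:
\begin{itemize}
\item The identity $\operatorname{tr}A_G^2-\operatorname{tr}A_{G-v}^2=2\deg v$ gives no control over how the increase splits between positive and negative eigenvalues.
\item The proposed convexity amplification assumes negative eigenvalues have modulus at least $1$. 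This fails in general: $P_4$ has eigenvalue $-0.618$ and $H_2$ has $-0.539$ (\Cref{table:3neg-energy}).
\item Choosing $v$ to minimize its Perron entry, and ``settling the remaining cases'' with $K_{1,2}$ and $K_3$ configurations, are placeholders rather than arguments.
\end{itemize}

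\textbf{The Cauchy--Schwarz route.} This cannot work even in principle. Grant $\mathcal{E}^-_3\ge n-1$ (itself the main theorem of this paper) and $\mathcal{E}^-_2\le m$. You still only get $\mathcal{E}^-_4\ge (n-1)^2/m$, which is below $n-1$ for every connected graph that is not a tree.

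\textbf{The H\"older step.} You discarded H\"older for the wrong reason. To pass from a lower bound at exponent $r$ to one at $r'>r$, the moment you interpolate against must be bounded \emph{above}. The natural choice is the zeroth moment, namely the number $\theta$ of negative eigenvalues, which satisfies $\theta\le n-1$ because $\lambda_1>0$. H\"older then gives $\mathcal{E}^-_{r'}\ge(\mathcal{E}^-_r)^{r'/r}\theta^{-(r'-r)/r}\ge n-1$ whenever $\mathcal{E}^-_r\ge n-1$ (\Cref{lem:r to r'}). So no truncation to $\mu_i\ge1$ is needed, and the whole range of exponents collapses to a single base case.

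\textbf{How the paper gets the statement.} The statement is quoted from \cite{AKBARI202596}, and the paper recovers it from \Cref{thm:3-energy-main} together with \Cref{lem:r to r'}. That theorem gives $\mathcal{E}^-_3(G)\ge n$ for $G\notin\{K_n,P_3\}$, and $K_n$ and $P_3$ still give at least $n-1$. The real work is this base case. The paper does it not by deleting a single vertex but by a minimal-counterexample argument built on the block super-additivity of \Cref{thm: additivity}:
\begin{itemize}
\item It repeatedly exhibits an induced subgraph $H$ whose negative $3$-energy is at least $|H|$ plus the number of components of $G-H$. By minimality, each component of $G-H$ loses at most $1$.
\item This is backed by computer verification for $n\le 10$.
\item A structural analysis around a maximum-degree vertex $v$ forces $G-v$ to be a disjoint union of cliques and copies of $P_3$, after which \Cref{lem:P3+cliques} finishes.
\end{itemize}
Your proposal has no substitute for this base-case argument, so as written it does not establish the theorem.
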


In this paper, we take a step towards resolving \Cref{conj:negative} by showing that it holds for $p\geq 3$. 

\begin{theorem}\label{thm:3-energy-main}
Let $n$ be a positive integer. If $G$ is a connected $n$-vertex graph, not isomorphic to $K_n$ or $P_3$, then $\mathcal{E}_3^{-}(G) \geq n.$
\end{theorem}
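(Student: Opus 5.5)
Our plan is to exploit the trace identity $\operatorname{tr}(A_G^3)=6t(G)$, where $t(G)$ is the number of triangles, which gives
\[
\mathcal{E}_3^{+}(G)-\mathcal{E}_3^{-}(G)=6t(G).
\]
For bipartite graphs the spectrum is symmetric, so $\mathcal{E}_3^-(G)=\mathcal{E}_3^+(G)\ge \lambda_1^3$. In general, then, it suffices to bound $\mathcal{E}_3^-$ from below either directly or by a super-additivity argument. We will combine two ingredients: an induced-subgraph monotonicity and a decomposition of $V(G)$ into small pieces, each contributing at least its size.

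\textbf{Step 1 (interlacing/super-additivity).} We first show that if $V(G)$ is partitioned into sets $V_1,\dots,V_k$ with induced subgraphs $H_i=G[V_i]$, then $\mathcal{E}_3^-(G)\ge \sum_i \mathcal{E}_3^-(H_i)$. We will try to establish this as in Akbari--Kumar--Mohar. Write $A_G^-$ for the negative part of $A_G$ in its spectral decomposition. The compression of $A_G$ to $V_i$ is $A_{H_i}$. For the convex function $f(x)=(\max\{-x,0\})^3$, the Schur-complement/pinching inequality gives $\operatorname{tr} f(A_G)\ge \operatorname{tr} f\bigl(\bigoplus_i A_{H_i}\bigr)$, since pinching to a block diagonal is a convex combination of unitary conjugates. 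This yields super-additivity over vertex partitions with no edge condition at all, which is much stronger than needing connectivity of the pieces.

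\textbf{Step 2 (base pieces).} Next we partition $V(G)$ into induced subgraphs $H_i$ and require $\mathcal{E}_3^-(H_i)\ge |V_i|$ for each piece. Single vertices contribute $0$, so we need pieces that compensate. Useful computations:
\begin{itemize}
\item $\mathcal{E}_3^-(K_2)=1$, which falls short of $2$.
\item $\mathcal{E}_3^-(P_3)=2\sqrt2\approx2.83<3$.
\item $\mathcal{E}_3^-(K_m)=m-1$.
\item For $P_4$, $\mathcal{E}_3^-=\varphi^3+\varphi^{-3}=2\sqrt5\approx4.47\ge4$.
\item For $K_{1,3}$, $\mathcal{E}_3^-=3\sqrt3\ge4$.
\item For $K_{1,m}$, $\mathcal{E}_3^-=m^{3/2}\ge m+1$ for $m\ge3$.
\end{itemize}
Pieces alone lose, so Step 1 must be sharpened using the edges between parts. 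The deficits for $K_2$ and $K_m$ pieces have to be recovered from the off-diagonal blocks.

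We will therefore proceed by induction on $n$ instead. Choose a vertex (or a small set $S$) whose removal keeps $G-S$ connected and not $K$ or $P_3$. Then show that adding $S$ back increases $\mathcal{E}_3^-$ by at least $|S|$. For this we use a perturbation estimate: if $G'=G-v$ and $v$ is joined to a neighbor set $N$, the gain in $\mathcal{E}_3^-$ is at least controlled by $\sum_{\lambda<0}|\lambda|^3$ changes. We plan to use the rank-two/bordered-matrix Cauchy interlacing, $\lambda_{i+1}(G)\le\lambda_i(G')$, plus the trace identities. The relevant identities are $\sum\lambda^2=2e$ and $\sum\lambda^3=6t$: adding $v$ increases $2e$ by $2d(v)$ while $6t$ increases by $6e(N)$. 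Together with $\mathcal{E}_3^+$ being bounded via $\lambda_1$, these should give a lower bound on the new negative mass.

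\textbf{Main obstacle.} The hard part is the inductive increment when $v$'s neighborhood is nearly a clique, i.e.\ graphs close to $K_n$. There $\mathcal{E}_3^-$ is barely above $n-1$ and the trace bookkeeping is tight. We will treat this regime separately. First we handle graphs with a universal vertex or with $\lambda_n$ close to $-1$ by explicit spectral analysis: a $K_n$ minus a few edges, or $K_n$ with pendant vertices. Then we argue that if $G\ne K_n$ is connected, it contains an induced $P_3$. Taking $P_3$ together with further vertices, we build induced subgraphs of type $P_4$, $K_{1,3}$, paw, diamond, or $K_m$ minus an edge, each having slack $\mathcal{E}_3^-\ge |V|$. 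These are checked by direct computation. Finally we extend them by the inductive step, where the near-clique obstacle no longer consumes the slack.
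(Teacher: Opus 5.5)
You have a genuine gap exactly where the difficulty lies. Your Step 1 is fine: pinching is a valid proof of the super-additivity the paper imports from Akbari--Kumar--Mohar, and your small-graph values are correct. But you never show that re-inserting $S$ raises $\mathcal{E}_3^-$ by at least $|S|$, and the tools you name cannot give this.

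\textbf{Interlacing and traces do not give the increment.} Cauchy interlacing only yields $\mathcal{E}_3^-(G)\ge\mathcal{E}_3^-(G-v)$, or $\mathcal{E}_3^-(G)\ge\mathcal{E}_3^-(G-v)-|\lambda_{n-1}(G-v)|^3+|\lambda_n(G)|^3$. That is a gain of at least $0$, not $1$. The trace identity gives $\mathcal{E}_3^-(G)-\mathcal{E}_3^-(G-v)=\bigl(\mathcal{E}_3^+(G)-\mathcal{E}_3^+(G-v)\bigr)-6e(N(v))$. The only lower bound you offer for the bracket is $\lambda_1(G)^3-\lambda_1(G-v)^3$. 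This is exact for $K_m\to K_{m+1}$ but worthless in general. Suppose $v$ completes a small clique $K_d$ hanging by one edge off a much larger clique. Then $\lambda_1$ essentially does not move, while $6e(N(v))=3d(d-1)$. So your bound is about $-3d(d-1)$, although the true gain is positive (roughly $1.5$ for $d=2$, by a large-$M$ approximation). The trouble is therefore not confined to near-complete graphs: the positive-side increase is spread over many eigenvalues, and the cubic trace bookkeeping is global.

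A per-vertex gain of $1$ is also genuinely exponent-sensitive. For $p=2$ it fails: attaching a pendant vertex to the Petersen graph raises $\mathcal{E}_2^-$ from $16$ only to about $16.88$. So any such inequality for $p=3$ needs a real argument, which your outline does not supply.

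\textbf{The slack-subgraph fallback misses the component accounting.} Super-additivity gives $\mathcal{E}_3^-(G)\ge\mathcal{E}_3^-(H)+\mathcal{E}_3^-(G-H)$. But $G-H$ may split into several components that are cliques or copies of $P_3$, each of which can fall short of its order by up to $1$. In a minimal counterexample one only has $\mathcal{E}_3^-(G')\ge|G'|-c(G')$. The slacks you list (paw $\approx0.25$, $P_4\approx0.47$, diamond $\approx0.81$, $K_{1,3}\approx1.2$) do not pay for even one such component. So ``extending by the inductive step'' is not available.

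The paper handles this by a structural reduction. In a minimal counterexample, whenever $H$ and $G-H$ are both connected, one of them must be a clique or $P_3$. Taking $v$ of maximum degree, the paper shows $G-N[v]$ is a disjoint union of cliques. It then analyses how these cliques and $N(v)$ attach. Each time it either exhibits a test subgraph ($C_4$ with slack $4$, $C_5$, $H_{12}$, $H_{15}$--$H_{17}$, etc.) whose slack exceeds the number of components it leaves behind, or bounds $\Delta$ so that $n\le10$. This forces $G-v$ to be a disjoint union of $P_3$'s and cliques, which is settled by interlacing against an induced subdivided star. The dominating-vertex case and the case ``$G-v$ is a union of cliques'' also need their own arguments, beyond the explicit families ``$K_n$ minus few edges'' and ``$K_n$ plus pendants''.

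Your proposal contains neither this reduction nor a proof of a per-vertex gain inequality, so it does not establish the theorem.
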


Note that $\mathcal{E}_3^{-}(P_3)=2\sqrt{2}>2=\mathcal{E}_3^{-}(K_3)$, and recall that $\mathcal{E}_p^-(K_n)=n-1$ for every integer $p\geq 2$. Our result thus implies that \Cref{conj:negative} is true for $p=3$.

We further remark that proving \Cref{conj:negative} for a smaller value of $p$ is more challenging than for a larger one. Note that \Cref{conj:positive} does not exhibit this phenomenon. For any real number $r\geq 2$, let $\mathcal{E}^-_r(G)=\sum\limits_{\lambda_{i}(G)<0}|\lambda_{i}(G)|^{r}$.

\begin{lemma}\label{lem:r to r'}
Let $r$ and $r'$ be two real numbers such that $2\leq r\leq r'$.  Assume that $G$ is a connected graph. 
If $\mathcal{E}_r^-(G)\geq\mathcal{E}_r^-(K_n)$, then $\mathcal{E}_{r'}^-(G)\geq\mathcal{E}_{r'}^-(K_n)$.
\end{lemma}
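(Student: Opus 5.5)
Write $\mu_1,\dots,\mu_k$ for the absolute values of the negative eigenvalues of $G$, so that $\mathcal{E}^-_r(G)=\sum_{j}\mu_j^{r}$. Recall that $\mathcal{E}^-_r(K_n)=n-1$ for every real $r\ge 2$, since the only negative eigenvalue of $K_n$ is $-1$ with multiplicity $n-1$. The hypothesis is therefore $\sum_j \mu_j^r\ge n-1$, and the goal is $\sum_j\mu_j^{r'}\ge n-1$. The plan is to compare the two power sums term by term after splitting the negative eigenvalues according to whether $\mu_j\ge 1$ or $\mu_j<1$.

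For $\mu_j\ge 1$ we have $\mu_j^{r'}\ge \mu_j^{r}$. For $\mu_j<1$ the inequality reverses, so a loss must be compensated. The naive approach uses the number of negative eigenvalues: if $G$ has at least $n-1$ negative eigenvalues, $G$ would essentially have to be $K_n$. This does not work directly, because small $\mu_j$ can be numerous.

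A better approach is convexity in the exponent. Let $f(t)=\sum_j \mu_j^{t}$. Then $\log f$ is convex in $t$ by Hölder's inequality, and we have an anchor point, since $f(2)=\sum_j\mu_j^2\ge$ something. For $t=2$ we have $\sum_j \mu_j^2=\sum_i\lambda_i^2-\sum_{\lambda>0}\lambda^2$, which is not directly controlled. Instead I would use the power-mean inequality. Since $r\le r'$,
\[
\Bigl(\tfrac1k\sum_j\mu_j^{r'}\Bigr)^{1/r'}\ge\Bigl(\tfrac1k\sum_j\mu_j^{r}\Bigr)^{1/r},
\]
so $\sum_j\mu_j^{r'}\ge k\,\bigl(\tfrac{S}{k}\bigr)^{r'/r}$ with $S=\sum_j\mu_j^r\ge n-1$. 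If $k\le n-1\le S$, then $S/k\ge 1$, hence $(S/k)^{r'/r}\ge S/k$, and we get $\sum_j\mu_j^{r'}\ge S\ge n-1$.

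So it remains to check that $k\le n-1$, i.e.\ that $G$ has at most $n-1$ negative eigenvalues. This holds for any graph, because the eigenvalues sum to $\operatorname{tr}A_G=0$: if all $n$ eigenvalues were negative, their sum would be negative, which is impossible. For $n=1$ there are no negative eigenvalues at all, both sides are $0$, and the statement is trivial; connectivity is not even needed.

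The only delicate point is this power-mean step, where the exponent ratio $r'/r\ge1$ combined with $S/k\ge1$ gives the monotonicity. I expect no real obstacle beyond writing this out carefully, including the case $k=0$, where the hypothesis forces $n-1\le 0$ and the conclusion is immediate.
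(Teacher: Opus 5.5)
Your argument is correct and is essentially the paper's. Your power-mean step is the same Hölder inequality $(\mathcal{E}_{r'}^-(G))^{r/r'}\theta^{(r'-r)/r'}\ge \mathcal{E}_r^-(G)$ that the paper uses, combined with the bound $\theta\le n-1$ on the number of negative eigenvalues; the only differences are cosmetic: you justify $k\le n-1$ via $\operatorname{tr}A_G=0$ (valid for any graph) rather than via $\lambda_1>0$, and you state the slightly stronger conclusion $\mathcal{E}_{r'}^-(G)\ge\mathcal{E}_r^-(G)$ explicitly.
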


Let $\theta$ denote the number of negative eigenvalues of $G$. Since $\lambda_1 >0$, we have $\theta \leq n-1$. By Hölder's inequality, for $r \leq r'$, we have the following inequality for $\mathcal{E}_r^-(G)$ and $\mathcal{E}_{r'}^-(G)$: $$(\mathcal{E}_{r'}^-(G))^{\frac{r}{r'}} \, \theta^{\frac{r'-r}{r'}} \;\geq\; \mathcal{E}_r^-(G) \;\geq\; \mathcal{E}_r^-(K_n) = n-1.$$ Therefore, we obtain $\mathcal{E}_{r'}^-(G) \geq (\frac{(n-1)^{r'}}{\theta^{r'-r}})^{\frac{1}{r}}.$ Since $\theta \leq n-1$, it follows that $\mathcal{E}_{r'}^-(G) \geq n-1.$ Thus, we have shown that $\mathcal{E}_{r'}^-(G) \geq \mathcal{E}_{r'}^-(K_n)$, completing the proof of the lemma.

\medskip
In particular, \Cref{lem:r to r'} holds for $r=p$ where $p$ is a positive integer. Combining all discussion above, we have the following corollary.

\begin{corollary}
Let $n$ and $p$ be two positive integers with $p\geq 3$. For every connected $n$-vertex graph $G$, $\mathcal{E}^-_p({G}) \geq \mathcal{E}^-_p({K_{n})}.$
\end{corollary}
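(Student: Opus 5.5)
The corollary is assembled from \Cref{thm:3-energy-main} and \Cref{lem:r to r'}, with the cited result of \cite{AKBARI202596} available for $p\ge 4$ but not needed. I would first settle the base case $p=3$ and then propagate it upward with \Cref{lem:r to r'} taking $r=3$.

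\textbf{Base case $p=3$.} Let $G$ be a connected $n$-vertex graph. There are three cases.
\begin{enumerate}
\item If $G\cong K_n$, then equality $\mathcal{E}_3^-(G)=\mathcal{E}_3^-(K_n)=n-1$ holds trivially.
\item If $G\cong P_3$, then $n=3$. The spectrum of $P_3$ is $\{\sqrt2,0,-\sqrt2\}$, so
\[
\mathcal{E}_3^-(P_3)=2\sqrt2>2=\mathcal{E}_3^-(K_3),
\]
as already noted after \Cref{thm:3-energy-main}.
\item Otherwise, \Cref{thm:3-energy-main} gives $\mathcal{E}_3^-(G)\ge n>n-1=\mathcal{E}_3^-(K_n)$.
\end{enumerate}
The degenerate case $n=1$ falls under the first case, since $G=K_1$ and both sides are $0$. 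In every case $\mathcal{E}_3^-(G)\ge\mathcal{E}_3^-(K_n)$.

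\textbf{Integers $p\ge 4$.} Apply \Cref{lem:r to r'} with $r=3$ and $r'=p$. The hypothesis $\mathcal{E}_3^-(G)\ge\mathcal{E}_3^-(K_n)$ was just established, so $\mathcal{E}_p^-(G)\ge\mathcal{E}_p^-(K_n)$ follows.

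One small point needs checking in the Hölder argument inside \Cref{lem:r to r'}. It divides by $\theta$, the number of negative eigenvalues, so we need $\theta\ge1$. For $n\ge2$, a connected graph has at least one edge, so $\operatorname{tr}A_G=0$ with $A_G\neq0$ forces a negative eigenvalue; hence $\theta\ge1$. For $n=1$ both sides are $0$ and there is nothing to prove.

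The genuine difficulty lies entirely in \Cref{thm:3-energy-main}, which we may assume. The corollary itself is routine bookkeeping; the only steps needing care are the exceptional graphs $K_n$ and $P_3$ and the trivial case $n=1$.
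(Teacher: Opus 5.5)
Your proposal is correct and follows the paper's route: the case $p=3$ comes from \Cref{thm:3-energy-main} plus direct checks of the exceptions $K_n$ (equality) and $P_3$ (where $\mathcal{E}_3^-(P_3)=2\sqrt{2}>2$), and the case $p\ge 4$ follows from \Cref{lem:r to r'} with $r=3$. Your check that $\theta\ge 1$ when $n\ge 2$, and your handling of $n=1$ separately, supply a small detail that the paper's Hölder argument leaves implicit.
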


Also we note that the $p=2$ case of \Cref{conj:negative} is the most changeling one. Together with \Cref{lem:r to r'}, our result of \Cref{thm:3-energy-main} suggests a natural direction for future work: investigating the real-valued graph energy, in particular, the $r$-energy of $G$ for rational $r \in (2, 3)$. 

Compared to the previous work \cite{AKBARI202596}, which established the result for $p \geq 4$, our main effort is devoted to handling a somewhat ``bad'' graph: the path $P_3$. This is due to the fact that the inequality $2<\mathcal{E}_3^-(P_3)<3$ holds, which introduces complications in the inductive argument. In this work, when analyzing the negative $3$-energy, we carefully examine the structure possibly containing an induced $P_3$ and employ some technical trick to proceed further. We note that this approach is effective here because the only obstruction is $P_3$, aside from complete graphs $K_n$'s. In contrast, the case $p=2$ is considerably more complicated, as all paths $P_n$ constitute challenging base cases for induction.

\medskip
Besides, regarding~\Cref{conj:positive}, we establish the following main result concerning the positive $3$-energy of connected graphs. 

\begin{theorem}\label{thm:positive-main}
Let $n$ be a positive integer. If $G$ is a connected $n$-vertex graph, not isomorphic to $K_1,K_2$, and $P_3$, then $\mathcal{E}_3^{+}(G)  \geq \frac{\sqrt{5}}{2}n$.
\end{theorem}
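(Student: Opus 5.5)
We will prove \Cref{thm:positive-main} by induction on $n$, using a super-additivity property of positive $p$-energies under vertex partitions. The basic tool is the following inequality, in the spirit of the square-energy super-additivity of \cite{akbari2025linear}. If $V(G)=V_1\cup\cdots\cup V_k$ is a partition into parts inducing subgraphs $G_1,\dots,G_k$, then
\[
\mathcal{E}_3^+(G)\ \geq\ \sum_{i=1}^k \mathcal{E}_3^+(G_i).
\]
To justify it, write $A_G=P-N$ with $P,N\succeq 0$ the positive and negative parts, so that $\mathcal{E}_3^+(G)=\operatorname{tr}P^3$. Let $\Pi_i$ be the coordinate projections. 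By interlacing/convexity (Cauchy interlacing applied to compressions and the convexity of $t\mapsto (t^+)^3$), the map $X\mapsto \operatorname{tr}f(X)$ with $f(t)=\max(t,0)^3$ satisfies
\[
\sum_i \operatorname{tr}f(\Pi_i A_G\Pi_i)\le \operatorname{tr}f(A_G).
\]
This is the pinching inequality for convex $f$ with $f(0)=0$: the block-diagonal pinching of $A_G$ is majorized by $A_G$. So the inequality holds for \emph{any} partition, connected or not.

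The induction then reduces to finding a partition of $V(G)$ into connected pieces, each of which has $\mathcal{E}_3^+\ge\frac{\sqrt5}{2}|V_i|$. The exceptional graphs are exactly those falling below this ratio:
\[
\mathcal{E}_3^+(K_1)=0,\qquad \mathcal{E}_3^+(K_2)=1<\sqrt5,\qquad \mathcal{E}_3^+(P_3)=2\sqrt2<\tfrac{3\sqrt5}{2}.
\]
Next we check the small "good" blocks:
\begin{itemize}
\item $P_4$: $\mathcal{E}_3^+=\varphi^3+\varphi^{-3}=2\sqrt5$, which equals $\frac{\sqrt5}{2}\cdot 4$ exactly. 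This explains the constant.
\item $K_3$: $8\ge \frac{3\sqrt5}{2}$.
\item $K_{1,3}$: $3\sqrt3\ge 2\sqrt5$.
\item $C_4$: $8$.
\item All connected graphs on $4$ and $5$ vertices, checked by a short computation or by bounding $\lambda_1^3$ via $\lambda_1\ge$ average degree and $\lambda_1\ge\sqrt{\Delta}$. For most small graphs, $\lambda_1^3$ alone suffices.
\end{itemize}

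The core combinatorial step is to show that every connected graph $G$ on $n\ge 4$ vertices has a vertex partition into connected induced subgraphs, each of order at least $4$ or isomorphic to $K_3$ (or another good small graph), and each satisfying the bound by induction or direct check. Since parts of order $\ge4$ are never exceptional, the induction hypothesis applies to them. To build the partition, take a spanning tree $T$ of $G$ and use a standard tree decomposition: repeatedly cut off a subtree of size between $4$ and $7$ hanging from a deepest suitable vertex. The trouble is the leftover part, which may have size $1$, $2$ or $3$. In that case we either merge it with an adjacent part, which keeps the part connected and of order $\ge4$, or use $G$ directly when $n\le 7$. For the direct range we need the statement for all connected graphs of order $4$ to $7$. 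Rather than handle these by computation, we will organize the cutting so that every final part has order $\ge4$, and treat $n\le 7$ by taking the whole graph as one part. This requires a separate argument for orders $4$ to $7$.

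The main obstacle is the base range $4\le n\le7$, and more generally graphs where $\lambda_1^3$ alone is too small, namely sparse graphs such as paths and trees close to paths. For these we plan to do the following:
\begin{itemize}
\item Split off $P_4$'s, for which equality holds, using the super-additivity inequality above.
\item For trees, note that $\mathcal{E}_3^+(T)=\frac12\mathcal{E}_3(T)$ by bipartite symmetry, and compare with the path via Nikiforov-type bounds.
\item For a general sparse graph of order $5$, $6$ or $7$, bound $\lambda_1^3+\lambda_2^3$ from below using the Rayleigh quotient on two disjoint induced paths or stars.
\end{itemize}
We expect the tight cases to be near-paths such as $P_5$, $P_6$ and $P_7$, whose values $\mathcal{E}_3^+(P_n)$ must be checked explicitly against $\frac{\sqrt5}{2}n$. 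These path values are computable from the cosine formula in the introduction and exceed the bound; for example, $P_5$ gives $3\sqrt3+1\approx 6.196>5.59$.
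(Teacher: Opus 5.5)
\textbf{There is a genuine gap: the decomposition step cannot handle an infinite family of hub graphs, and that family is the real obstacle, not the range $4\le n\le 7$.}

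Your super-additivity inequality is fine: the pinching of $A_G$ is majorized by $A_G$, which is exactly \Cref{thm: additivity}. Your small-graph values are also correct.

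The problem is the \emph{core combinatorial step}. Taken literally it is vacuous, since $\{V(G)\}$ is always such a partition. The induction needs a partition into \emph{at least two} good parts, and for arbitrarily large $n$ none exists. Suppose $G$ has a vertex $w$ such that every component of $G-w$ is $K_1$, $K_2$ or $P_3$. Examples are $K_{1,n-1}$, the friendship graphs ($k$ triangles sharing a vertex), spiders with all legs of length at most $2$, or $w$ joined to many disjoint $P_3$'s. Then every connected part avoiding $w$ lies inside one such component and is therefore exceptional, so $\{V(G)\}$ is the only admissible partition. The same examples show that \emph{cut off a subtree of order between $4$ and $7$} fails. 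The deepest vertex whose subtree has at least $4$ vertices may have arbitrarily many child subtrees of order at most $3$, and merging the leftovers just returns $G$.

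Nothing in your plan covers this family. The inequality $\mathcal{E}_3(T)\ge\mathcal{E}_3(P_n)$ for trees is part of a conjecture of Nikiforov quoted in the introduction, not a theorem. It would not reach non-trees such as friendship graphs anyway. Your Rayleigh-quotient bound is only proposed for orders $5$--$7$.

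The missing ingredient is a spectral treatment of exactly this family, which is how the paper finishes. Once the super-additivity splits are exhausted, $G-w$ is a disjoint union of $n_1$ copies of $K_1$, $n_2$ of $K_2$, $n_{3^-}$ of $P_3$ and $n_3$ of $K_3$, with $s=n_1+n_2+n_{3^-}+n_3$ components. Interlacing (\Cref{thm:alternate}) gives
\[
\mathcal{E}_3^+(G)\ \ge\ \mathcal{E}_3^+(G-w)+\lambda_1(G)^3-\lambda_1(G-w)^3 ,
\]
with $\lambda_1(G)\ge\sqrt{s}$ from a $K_{1,s}$ subgraph and $\lambda_1(G-w)\le 2$ (or $\le\sqrt2$ if $n_3=0$). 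Since $n=s+n_2+2n_{3^-}+2n_3+1$, the target becomes a linear inequality in $(n_2,n_{3^-},n_3)$. It holds for $s\ge5$ and is checked by hand for $s=4$; the cases $n\le 10$ are done by computer.

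Your $\lambda_1\ge\sqrt{\Delta}$ instinct is the right one here. Already $\lambda_1(G)^3\ge s^{3/2}\ge\bigl(\tfrac{n-1}{3}\bigr)^{3/2}\ge\tfrac{\sqrt5}{2}n$ for $n\ge 37$, but the intermediate range needs the interlacing bookkeeping.

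For the reduction itself, proceed as follows.
\begin{enumerate}
\item Take a spanning tree $T$. If some edge of $T$ has both sides of order $\ge4$, split there and apply induction.
\item Otherwise, orient each edge of $T$ toward its side of order $\ge 4$ and let $w$ be a sink. Then every component of $T-w$ has at most $3$ vertices.
\item If some component of $G-w$ still has at least $4$ vertices, grow inside it a connected union of components of $T-w$ until it has between $4$ and $6$ vertices. Split it off: the remainder is connected through $w$ and has order at least $n-6$.
\end{enumerate}
The paper's write-up instead takes $w$ of maximum degree and only excludes $P_4$-subgraphs of $G-w$. By itself, that neither bounds the components of $T-w$ nor excludes star components $K_{1,m}$, $m\ge3$, of $G-w$; the version above avoids both issues.
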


In the end, we state some useful results. 

\begin{theorem}{\rm \cite{AKBARI202596}}\label{thm: additivity}
    Let $A = [A_{ij}]_{i,j=1}^k$ be a Hermitian matrix partitioned into $k^2$ blocks, where $A_{ii}$ are square matrices. For any real number $r \geq 1$, the following hold: $$\mathcal{E}_r^+(A) \geq \sum_{i=1}^k \mathcal{E}_r^+(A_{ii}) \quad\text{~and~}  \quad\mathcal{E}_r^-(A) \geq \sum_{i=1}^k \mathcal{E}_r^-(A_{ii}).$$
\end{theorem}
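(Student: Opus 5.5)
The plan is to derive both inequalities from a single trace inequality for convex functions, applied to the function $f_r(x)=(\max\{x,0\})^r$. For $r\geq 1$ this function is convex and nonnegative on $\mathbb{R}$. For any Hermitian matrix $X$ we have $\mathcal{E}_r^+(X)=\operatorname{Tr} f_r(X)=\sum_j f_r(\mu_j)$, where the $\mu_j$ are the eigenvalues of $X$. The negative part reduces to the positive part, since $\mathcal{E}_r^-(X)=\mathcal{E}_r^+(-X)$ and $-A$ is partitioned with diagonal blocks $-A_{ii}$. So it suffices to prove $\operatorname{Tr} f_r(A)\ge \sum_i \operatorname{Tr} f_r(A_{ii})$.

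\textbf{Step 1 (block-supported eigenbasis).} Let $M=\operatorname{diag}(A_{11},\dots,A_{kk})$. Each $A_{ii}$ is Hermitian, so I diagonalize each block separately. Padding the eigenvectors of $A_{ii}$ with zeros outside the $i$-th block yields an orthonormal eigenbasis $v_1,\dots,v_N$ of $M$ in which every vector is supported on a single block. For such a vector $v$ lying in block $i$, the off-diagonal blocks contribute nothing, so $v^*Av=v^*A_{ii}v=v^*Mv$. Therefore
$$\sum_i \operatorname{Tr} f_r(A_{ii})=\operatorname{Tr} f_r(M)=\sum_{j} f_r(v_j^*Mv_j)=\sum_j f_r(v_j^*Av_j).$$

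\textbf{Step 2 (Jensen).} Write $A=\sum_\ell \mu_\ell u_\ell u_\ell^*$ spectrally. For a unit vector $v$, the weights $w_\ell=|u_\ell^*v|^2$ are nonnegative and sum to $1$, and $v^*Av=\sum_\ell w_\ell\mu_\ell$. Convexity of $f_r$ then gives
$$f_r(v^*Av)\le \sum_\ell w_\ell f_r(\mu_\ell)=v^*f_r(A)v.$$
Summing over the orthonormal basis $\{v_j\}$ gives $\sum_j f_r(v_j^*Av_j)\le \sum_j v_j^*f_r(A)v_j=\operatorname{Tr} f_r(A)=\mathcal{E}_r^+(A)$. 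Combined with Step 1 this proves the first inequality, and applying it to $-A$ proves the second.

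\textbf{Main obstacle.} Almost nothing here is hard. The one point that needs care is Step 1: the eigenbasis of $M$ must be chosen block-supported, since only then do the cross terms of $A$ vanish in $v^*Av$. One also has to check that the convexity of $f_r$ is exactly what the hypothesis $r\ge 1$ is used for. An alternative route is the pinching identity $M=\frac1k\sum_{j=0}^{k-1}U^jAU^{-j}$ with $U=\operatorname{diag}(I,\omega I,\dots,\omega^{k-1}I)$ and $\omega=e^{2\pi i/k}$, combined with convexity and unitary invariance of $X\mapsto\operatorname{Tr} f_r(X)$. However, proving that convexity requires the same Jensen argument, so I would present the direct proof above.
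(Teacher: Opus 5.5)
The paper gives no proof of this statement: it is quoted from \cite{AKBARI202596} and used as a black box, so there is no internal argument to compare yours with. Your proof is correct and self-contained.

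\begin{itemize}
\item The function $f_r(x)=(\max\{x,0\})^r$ is convex for $r\ge 1$, being a convex nondecreasing function of a convex function.
\item It vanishes on $(-\infty,0]$, so $\operatorname{Tr} f_r(X)=\mathcal{E}_r^+(X)$.
\item The block-supported eigenbasis of $\operatorname{diag}(A_{11},\dots,A_{kk})$ makes the off-diagonal blocks drop out of $v_j^*Av_j$.
\item Step 2 is Peierls' inequality $\sum_j f(v_j^*Av_j)\le\operatorname{Tr} f(A)$, valid for convex $f$ and any orthonormal basis.
\item Passing to $-A$ handles $\mathcal{E}_r^-$.
\end{itemize}

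Your argument actually proves $\operatorname{Tr} f(A)\ge\sum_i\operatorname{Tr} f(A_{ii})$ for every convex $f$. The hypothesis $r\ge 1$ is used only to make $f_r$ convex.

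An alternative route, common in the energy literature, splits the matrix instead of pinching it.
\begin{itemize}
\item Write $A=P-N$ with $P,N\succeq 0$ and $PN=0$, so that $\mathcal{E}_r^+(A)=\operatorname{Tr}P^r$.
\item Since $N_{ii}\succeq 0$, Weyl monotonicity gives $\lambda_j(A_{ii})\le\lambda_j(P_{ii})$, hence $\mathcal{E}_r^+(A_{ii})\le\operatorname{Tr}P_{ii}^r$.
\item Finally, use $\sum_i\operatorname{Tr}P_{ii}^r\le\operatorname{Tr}P^r$ for $P\succeq 0$.
\end{itemize}
That version is nearly trivial at $r=2$, where the last step is just $\sum_i\|P_{ii}\|_F^2\le\|P\|_F^2$. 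For general $r$, however, its last step is itself a pinching/majorization inequality, proved by exactly your Jensen argument. Your approach skips the splitting and the Weyl step entirely and treats both signs uniformly.
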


\begin{theorem}[Eigenvalue Interlacing Theorem~{\cite{CRS2010}}]\label{thm:alternate} 
For any $n$-vertex graph $G$ and a subset $A\subset V(G)$, for $i \in \{1, 2, \ldots, n-|A|\}$, we have $$\lambda_i(G)\geq \lambda_i(G-A)\geq\lambda_{i+|A|}(G).$$
\end{theorem}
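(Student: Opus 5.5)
The statement is the classical Cauchy interlacing theorem for the principal submatrix $A_{G-A}$ of $A_G$. I would prove it directly from the Courant--Fischer min-max characterization, without induction on $|A|$. Write $m=|A|$ and order the vertices so that those of $A$ come last. Then
$$A_G=\begin{pmatrix} B & C\\ C^{T} & D\end{pmatrix},$$
where $B=A_{G-A}$ is the $(n-m)\times(n-m)$ adjacency matrix of $G-A$. Let $J\colon\mathbb{R}^{n-m}\to\mathbb{R}^n$ be the isometric embedding $x\mapsto (x,0)$. For every $x$ we have $x^{T}Bx=(Jx)^{T}A_G(Jx)$ and $\|Jx\|=\|x\|$, so Rayleigh quotients of $B$ are exactly Rayleigh quotients of $A_G$ restricted to the subspace $W=J(\mathbb{R}^{n-m})$, which has dimension $n-m$.

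\emph{Upper bound.} By Courant--Fischer,
$$\lambda_i(G)=\max_{\dim U=i}\ \min_{0\ne y\in U}\frac{y^{T}A_Gy}{y^{T}y}.$$
Take $U'\subseteq\mathbb{R}^{n-m}$ with $\dim U'=i$ attaining the max-min for $\lambda_i(B)$, and set $U=J(U')$, which has dimension $i$. Then the minimum of the Rayleigh quotient of $A_G$ over $U$ equals $\lambda_i(B)$. Hence $\lambda_i(G)\ge\lambda_i(G-A)$.

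\emph{Lower bound.} Use the dual form
$$\lambda_{j}(M)=\min_{\dim U=N-j+1}\ \max_{0\ne y\in U}\frac{y^{T}My}{y^{T}y}$$
for an $N\times N$ symmetric matrix $M$. Apply it to $B$ with $N=n-m$ and $j=i$: there is a subspace $U'$ of dimension $n-m-i+1$ on which the maximum Rayleigh quotient of $B$ equals $\lambda_i(B)$. Its image $J(U')\subseteq\mathbb{R}^n$ has dimension $n-(i+m)+1$. By the dual form for $A_G$ with $j=i+m$, we get $\lambda_{i+m}(G)\le\max_{J(U')}R_{A_G}=\lambda_i(B)$.

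The only point needing care is the index bookkeeping in the dual form, namely that $\dim J(U')=n-(i+m)+1$. This is also exactly why the range $1\le i\le n-m$ is required: it guarantees that $i+m\le n$, so both $\lambda_i(G-A)$ and $\lambda_{i+m}(G)$ are defined. Since both bounds follow at once from the two forms of Courant--Fischer, I expect no genuine obstacle.
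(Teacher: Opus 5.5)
Your proof is correct: on the coordinate subspace $J(\mathbb{R}^{n-m})$ the Rayleigh quotients of $A_G$ are exactly those of $A_{G-A}$. So the max--min form gives $\lambda_i(G)\ge\lambda_i(G-A)$, and the min--max form with the dimension count $n-m-i+1=n-(i+m)+1$ gives $\lambda_i(G-A)\ge\lambda_{i+m}(G)$. The paper gives no proof of this statement and simply quotes it as the classical Cauchy interlacing theorem from \cite{CRS2010}; your Courant--Fischer argument is a standard proof of that result, so nothing is missing.
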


\begin{lemma}[\cite{CRS2010}]\label{lem:complete graph+star+cycle} 
The following claims hold.
\begin{enumerate}[label=(\arabic*)]
\setlength{\itemsep}{0em}
    \item\label{Kn} $\operatorname{Spec}(K_n)=\{n-1,(-1)^{n-1}\}$, $\mathcal{E}_3^-(K_n)=n-1$, and $ \mathcal{E}_3^+(K_n)=(n-1)^3$.
    \item\label{K1n} $\operatorname{Spec}(K_{1,n})=\{-\sqrt{n},\, 0^{n-1},\, \sqrt{n}\}$ and  $\mathcal{E}_3^-(K_{1,n})=\mathcal{E}_3^+(K_{1,n})=n\sqrt{n}$.
\end{enumerate}
\end{lemma}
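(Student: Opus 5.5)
Both parts are direct spectral computations; I would handle each by writing the adjacency matrix in a convenient form and then read off the $3$-energies.

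For \ref{Kn}, I write $A_{K_n}=J-I$, where $J$ is the $n\times n$ all-ones matrix. The matrix $J=\mathbf{1}\mathbf{1}^{T}$ has rank one. Its eigenvalue $n$ has eigenvector $\mathbf{1}$, and its eigenvalue $0$ has multiplicity $n-1$ on the orthogonal complement $\mathbf{1}^{\perp}$. Subtracting $I$ shifts every eigenvalue by $-1$, so $\operatorname{Spec}(K_n)=\{n-1,(-1)^{n-1}\}$. Then
$$\mathcal{E}_3^-(K_n)=(n-1)\cdot|-1|^3=n-1 \quad\text{and}\quad \mathcal{E}_3^+(K_n)=(n-1)^3,$$
where the latter needs $n\ge 2$ so that $n-1>0$; for $n=1$ both sides are $0$ and the formula is trivially consistent.

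For \ref{K1n}, the star $K_{1,n}$ has $n+1$ vertices. Its adjacency matrix has the form $\begin{pmatrix}0&\mathbf{1}^{T}\\ \mathbf{1}&0\end{pmatrix}$, with $\mathbf{1}\in\mathbb{R}^{n}$. This matrix has rank $2$: its nonzero columns are the first column and $n$ identical copies of $e_1$. Hence $0$ is an eigenvalue of multiplicity $(n+1)-2=n-1$. Let $\pm$-candidates $\mu_1,\mu_2$ be the two remaining eigenvalues. Since the trace is $0$, $\mu_1+\mu_2=0$. Since $\operatorname{tr}(A^2)=2|E|=2n$, we get $\mu_1^2+\mu_2^2=2n$. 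Together these give $\{\mu_1,\mu_2\}=\{\sqrt n,-\sqrt n\}$. Equivalently, one can check directly that $(\pm\sqrt n,1,\dots,1)^{T}$ are eigenvectors for $\pm\sqrt n$. Consequently
$$\mathcal{E}_3^-(K_{1,n})=\mathcal{E}_3^+(K_{1,n})=(\sqrt n)^3=n\sqrt n.$$

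There is no real obstacle here. The only points requiring care are the multiplicity count via the rank, and the fact that $K_{1,n}$ has $n+1$ (not $n$) vertices, which fixes the multiplicity of $0$ at $n-1$.
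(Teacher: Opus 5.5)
Your argument is correct: writing $A_{K_n}=J-I$ settles part (1), and for the star the rank-$2$ observation, combined with $\operatorname{tr}A=0$ and $\operatorname{tr}A^2=2n$, forces the two nonzero eigenvalues to be $\pm\sqrt n$ (symmetry makes the multiplicity of $0$ equal to the nullity $n-1$). The paper gives no proof of its own and simply cites the lemma from the textbook of Cvetkovi\'c, Rowlinson, and Simi\'c; your standard computation supplies the justification the citation stands in for.
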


The remainder of the paper is organized as follows: In the next section, we prove \Cref{conj:positive} for $p=3$ and graphs with $n$ vertices and $1.2n$ edges, and then present the proof of \Cref{thm:positive-main}. In \Cref{sec:negative-energy}, we first characterize some special graphs that satisfy \Cref{conj:negative} for $p=3$ and serve as the base case for the induction. Finally, in \Cref{sec:mainproof}, we provide the proof of \Cref{thm:3-energy-main}.

\section[positive]{On the positive $3$-energy of a graph}

We use a program to enumerate and compute all connected $n$-vertex graphs $G$ with $n \leq 10$; the algorithm used is provided in 
\href{https://github.com/chenzb911/graph-energy-verification}{https://github.com/chenzb911/graph-energy-verification}.

\begin{lemma}\label{lem:PositiveUpToTenVertices}
Let $n$ be an integer such that $n\leq 10$. Every connected $n$-vertex graph $G$ satisfies that $\mathcal{E}_3^+(G)\geq \mathcal{E}_3^+(P_n)$. Moreover, if $G$ is not isomorphic to any of $K_1,K_2$, and $P_3$, then $\mathcal{E}_3^{+}(G)  \geq \frac{\sqrt{5}}{2}n$.
\end{lemma}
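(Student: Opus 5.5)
This statement is a finite verification, so the plan is an exhaustive computer check rather than a structural argument. I would enumerate, for each $n\le 10$, all connected $n$-vertex graphs up to isomorphism. The natural tool is \texttt{geng -c} from the nauty package, which gives $1,1,2,6,21,112,853,11117,261080,11716571$ graphs for $n=1,\dots,10$, so about $1.2\times 10^7$ graphs in total. For each graph $G$ I would compute the spectrum of $A_G$ numerically with a symmetric eigensolver, and evaluate $\mathcal{E}_3^+(G)=\sum_{\lambda_i>0}\lambda_i^3$. I would then compare this value with the closed form
\[
\mathcal{E}_3^+(P_n)=\sum_{k=1}^{\lfloor n/2\rfloor}8\cos^{3}\!\Big(\frac{k\pi}{n+1}\Big)
\]
recalled in the introduction, and, when $G\notin\{K_1,K_2,P_3\}$, also with $\frac{\sqrt5}{2}n$.

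Two reductions keep the computation honest and fast. First, the positive eigenvalues determine everything needed, and the trace identity $\sum_i\lambda_i^3=6t(G)$, where $t(G)$ is the number of triangles, gives $\mathcal{E}_3^+(G)=6t(G)+\mathcal{E}_3^-(G)$. This can serve as an independent consistency check on the numerics. Second, by \Cref{thm: additivity} applied to the principal submatrix on the vertex set of any subgraph, $\mathcal{E}_3^+(G)\ge\mathcal{E}_3^+(H)$ for every induced subgraph $H$. Consequently any graph containing an induced subgraph already exceeding the target thresholds can be accepted without further work. In practice the direct computation is cheap enough that this pruning is optional.

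The main obstacle is rigor near equality rather than running time. Eigenvalues near $0$ must be classified correctly as positive or zero, but a near-zero eigenvalue contributes only about $\varepsilon^3$, so a misclassification cannot flip any inequality. The real danger is graphs where $\mathcal{E}_3^+(G)$ is very close to the bound. For the first claim, $G=P_n$ gives exact equality. For the second, the smallest cases, such as $K_{1,3}$ with value $3\sqrt3\approx5.196$ against $2\sqrt5\approx4.472$, and $P_4$ with value about $4.47$ close to $2\sqrt5$, need care. For $P_n$ itself one must confirm $\mathcal{E}_3^+(P_n)\ge\frac{\sqrt5}{2}n$ for $4\le n\le 10$. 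I would handle this by reporting the minimal slack over all graphs for each $n$ and checking it against a floating-point error bound, of order $10^{-10}$ for $10\times10$ matrices. Any graph whose slack falls below a tolerance such as $10^{-6}$ would be recomputed in exact arithmetic, using the characteristic polynomial over $\mathbb{Z}$ and isolating its real roots, or recognized as $P_n$. For $P_4$ specifically, the positive eigenvalues are the golden ratio $\phi$ and $\phi^{-1}$, so $\phi^3+\phi^{-3}=2\sqrt5$ exactly. This equality case should be certified by hand.
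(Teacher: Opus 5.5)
Your proposal is correct and is essentially the paper's own argument: the paper also proves this lemma by exhaustive computer enumeration and spectral computation for all connected graphs on at most $10$ vertices, with the code posted on GitHub. Your extra attention to numerical rigor, in particular certifying the exact equality cases $G=P_n$ and $\mathcal{E}_3^+(P_4)=\phi^3+\phi^{-3}=2\sqrt{5}$, is a sensible refinement that the paper leaves implicit.
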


Remark that $\mathcal{E}_3^{+}(P_4) = 2\sqrt{5} = \frac{\sqrt{5}}{2} \times |P_4|$, suggesting that $\frac{\sqrt{5}}{2} n$ is the tightest bound we can currently achieve (if we do not exclude $P_4$).

\medskip
\noindent
{\bf \Cref*{thm:positive-main}.} \emph{Let $n$ be a positive integer. If $G$ is a connected $n$-vertex graph, not isomorphic to $K_1,K_2$, and $P_3$, then $\mathcal{E}_3^{+}(G)  \geq \frac{\sqrt{5}}{2}n$.}

\begin{proof}
Let $n$ be a positive integer. Let $G$ be a connected $n$-vertex graph, and we proceed by induction on $n$. The assertion has been verified using a computer for all graphs up to $10$ vertices as shown in~\Cref{lem:PositiveUpToTenVertices}. From now on, we assume that $n\geq 11$.

Let $\Delta$ be the maximum degree of $G$ and 
let $w$ be a vertex with $d_G(w)=\Delta$. Assume that $T$ is a spanning tree of $G$ rooted at the vertex $w$. If there exists an edge of $E(T)$ incident to the vertex $w$, say $wu$, such that each of the two connected components $T_{a}$ and $T_{b}$ of $T-wu$ has at least $4$ vertices, then neither $G[V(T_{a})]$ nor $G[V(T_{b})]$ is isomorphic to any of $\{K_1, K_2, P_3\}$. By~\Cref{thm: additivity} and the induction hypothesis, $\mathcal{E}_3^{+}(G)\geq \mathcal{E}_3^{+}(G[V(T_{1})])+ \mathcal{E}_3^{+}(G[V(T_{2})]) \geq \frac{\sqrt{5}}{2}n$. Therefore, for each edge $wu$ of $E(T)$ incident to $w$, one of the two connected components of $T-wu$ has at most $3$ vertices.

We next consider the subgraph $G-w$. Let $T_1, \ldots, T_s$ denote the connected trees in $T-w$. 
We first claim that $G-w$ contains no subgraph isomorphic to $P_4$. Assume, to the contrary, that such a $4$-path $Q$ exists, and we choose one that intersects the minimum number of $T_i$'s. If there are $i,j\in [s]$ such that $V(Q)\subset V(T_i)\cup V(T_j)$, then as $|Q|=4$, $|T_i|+|T_j|\leq \max\{3+1, 3+2, 3+3, 2+2\}= 6$; Otherwise, there are either $V(Q)\subset V(T_i)\cup V(T_j)\cup V(T_k)$ with $|T_i|=2, |T_j|=1,$ and $|T_k|=1$, or $V(Q)\subset V(T_i)\cup V(T_j)\cup V(T_k)\cup V(T_\ell)$ with $|T_i|=|T_j|=|T_k|=|T_\ell|=1$, in both cases, $\sum |T_d|\leq \max\{2+1+1, 1+1+1+1\}=4$. Let $T^*$ denote the union of trees $T_i$'s which intersect the chosen $Q$. Since $Q\subset T^*$ and by the analysis above, we know that $4\leq |T^*|\leq 6$. Furthermore, $|T-T^*|\geq 11-6\geq 5$. By~\Cref{thm: additivity} and the induction hypothesis,
$\mathcal{E}^+_3(G)\geq\mathcal{E}^+_3(G[V(T^*)])+\mathcal{E}^+_3(G[V(T-T^*)]) \geq |V(T^*)|+|V(G)-V(T^*)|=\frac{\sqrt{5}}{2}n.$

Therefore, $G-w$ is a disjoint
union of $K_1, K_2, P_3,$ and $K_3$. Let $n_i$ denote the number of $K_i$'s in $G-w$ for $i\in [3]$, and let $n_{3^-}$ denote the number of $P_3$'s in $G-w$. Let $$s:= n_1+n_2+n_3+n_{3^-}.$$ Note that $s\leq d_G(w)\leq n-1$ and $n=n_1+2n_2+3n_3+3n_{3^-}+1$. Note that as $n\geq 11$, we have that $s\geq 4$.
Since $G$ contains $K_{1,s}$ as a subgraph, $\lambda_1(G)\geq\lambda_1(K_{1,s})=\sqrt{s}$. We consider two possibilities based on the value of $n_3$.

\medskip
\noindent
{\bf Case 1}. \emph{Assume that $n_3=0$.} 
\medskip

In this case, since $G-w$ contains no $K_3$, $\lambda_1(G-w)\leq \max\{\lambda_1(K_1),\lambda_1(K_2),\lambda_1(P_3)\}=\sqrt{2}$. 
By~\Cref{thm:alternate}, we have that
\[
\begin{aligned}
\mathcal{E}^+_3(G) 
&\geq \mathcal{E}^+_3(G-w)+\lambda_1(G)^3-\lambda_1(G-w)^3\\
&=\big(n_2\mathcal{E}^+_3(K_2)+n_{3^-}\mathcal{E}^+_3(P_3))+\lambda_1(G)^3-\lambda_1(G-w)^3\\
&\geq n_2+2\sqrt{2}n_{3^-}+s\sqrt{s}-2\sqrt{2}.
\end{aligned}
\]
Let $f_s(n_1,n_2,n_{3^-}):=n_2+2\sqrt{2}n_{3^-}+s\sqrt{s}-2\sqrt{2}- \frac{\sqrt{5}}{2}n$. It suffices to show that $f_s(n_1,n_2,n_{3^-})\geq 0$.
Substituting $n_1 = s - n_2 - n_{3^-}$ into $n$ gives
$n = s + n_2 + 2 n_{3^-} + 1.$ Thus $$f_s(n_1,n_2,n_{3^-})=(\sqrt{s}- \frac{\sqrt{5}}{2})s+(1-\frac{\sqrt{5}}{2})n_2+(2\sqrt{2}-\sqrt{5})n_{3^-}-2\sqrt{2}- \frac{\sqrt{5}}{2}.$$ Note that $1 - \frac{\sqrt{5}}{2} < 0,$ and $2\sqrt{2} -\sqrt{5} > 0.$ Hence, for a fixed $s = n_1 + n_2 + n_{3^-}$, a decrease in $n_{3^-}$ leads to a corresponding increase in $n_1$ or $n_2$, and consequently, a decrease in $f_s(n_1, n_2, n_{3^-})$; and also a decrease in $n_1$ leads to a corresponding increase in $n_2$, and consequently, again a decrease in $f_s(n_1, n_2, n_{3^-})$.  
Under the constraints of
$\min\{n_1, n_2, n_{3^-}\}\geq 0$ and $n_1 + n_2 + n_{3^-} = s$, we have that $$f_s(n_1, n_2, n_{3^-})\geq f_s(n_1+n_{3^-}, n_2,0 )\geq f_s(0, n_1+n_2+n_{3^-},0)=f_s(0,s,0),$$
where
$$
f_s(0,s,0)=s\bigl(\sqrt{s} + 1 - \sqrt{5}\bigr) - 2\sqrt{2} - \frac{\sqrt{5}}{2}.
$$

Note that $f_s(0,s,0)\geq 0$ when $s\geq 5$. Recall that $n\geq 11$, $s\geq 4$, and $n=n_1+2n_2+3n_{3^-}+1$. It remains to consider the case when $s=4$. We only have the following four possibilities: $(n_1,n_2,n_{3^-})\in \{(0,0,4),~(0,1,3),~(0,2,2),~(1,0,3)\}$. Note that $$f_4(n_1,n_2,n_{3^-})=8-2\sqrt{2}-\frac{5\sqrt{5}}{2}+(1-\frac{\sqrt{5}}{2})n_2+(2\sqrt{2}-\sqrt{5})n_{3^-}.$$ By the monotonicity of $f_4(n_1,n_2,n_{3^-})$ and some numerical computation, we obtain the following result: $f_4(0,1,3)\geq f_4(0,2,2)= 10 + 2\sqrt{2} - \frac{11\sqrt{5}}{2} \approx 0.53,~\text{and} ~f_4(0,0,4)\geq f_4(1,0,3) = 8 + 4\sqrt{2} - \frac{11\sqrt{5}}{2} \approx 1.35.$ Therefore, when $n_3=0$, $\mathcal{E}^+_3(G) \geq \frac{\sqrt{5}}{2}n.$

\medskip
\noindent
{\bf Case 2}. \emph{Assume that $n_3\geq 1$.}
\medskip

In this case, $\lambda_1(G-w)\leq \max\{\lambda_1(K_1),\lambda_1(K_2),\lambda_1(P_3),\lambda_1(K_3)\}=2$. 
By~\Cref{thm:alternate}, we have that
\[
\begin{aligned}
\mathcal{E}^+_3(G) 
&\ge \mathcal{E}^+_3(G-w) + \lambda_1(G)^3 - \lambda_1(G-w)^3 \\
&=\big(n_2 \mathcal{E}^+_3(K_2)+n_{3^-} \mathcal{E}^+_3(P_3)+n_3 \mathcal{E}^+_3(K_3) \big)+\lambda_1(G)^3-\lambda_1(G-w)^3 \\
&\ge n_2+2\sqrt{2}n_{3^-}+8n_3+s\sqrt{s}-8.
\end{aligned}
\]
Recall that $s = n_1 + n_2 + n_{3^-} + n_3$. Let
$g_s(n_1,n_2,n_{3^-},n_3):= s\sqrt{s} + n_2 + 2\sqrt{2} n_{3^-} + 8 n_3 - 8 - \frac{\sqrt{5}}{2} n$. Thus it suffices to prove that $g_s(n_1,n_2,n_{3^-},n_3)\geq 0$. Substituting $n_1 = s - n_2 - n_{3^-} - n_3$ into $n$ gives $n = s + n_2 + 2 n_{3^-} + 2 n_3 + 1.$ Hence, we have that $$g_s(n_1,n_2,n_{3^-},n_3)=(\sqrt{s} - \frac{\sqrt{5}}{2})s + (1- \frac{\sqrt{5}}{2}) n_2 + (2\sqrt{2}-\sqrt{5}) n_{3^-} + \left(8 - \sqrt{5}\right) n_3 - 8 - \frac{\sqrt{5}}{2}.$$ Under the constraints of $n_1,n_2,n_{3^-} \ge 0$, $n_3\geq 1$, and $n_1+n_2+n_{3^-}+n_3=s$, we have that
$$
\begin{aligned}
g_s(n_1, n_2, n_{3^-},n_3)
&\geq g_s(n_1+n_3-1, n_2,n_{3^-},1 )\\
&\geq g_s(n_1+n_3-1+n_{3^-}, n_2,0,1 )\\
&\geq g_s(0, n_1+n_2+n_3+n_{3^-}-1,0,1 )\\
&= g_s(0, s-1, 0, 1),
\end{aligned}
$$
where
$$
g_s(0, s-1, 0, 1) =(\sqrt{s}+1-\sqrt{5})s - 1 - \sqrt{5}.
$$ Note that the first inequality follows from the fact that, when fixing $s$, $n_2$, and $n_{3^-}$, a decrease in $n_3$ leads to a corresponding increase in $n_1$, and consequently, a decrease in $g_s$; the second inequality follows from the fact that, when fixing $s$ and $n_2$, a decrease in $n_{3^-}$ leads to a corresponding increase in $n_1$, and consequently, a decrease in $g_s$; the third inequality follows from the fact that, when fixing $s$, a decrease in $n_1$ leads to a corresponding increase in $n_2$, and consequently, a decrease in $g_s$.

It is easy to see that $g_s(0, s-1, 0, 1) \ge 0$ when $s \ge 5$. It remains to consider the case when $s=4$. Recall that in this case $n_3\geq 1$ and $s = n_1 + n_2 + n_{3^-} + n_3$. We have the following possibilities: $(n_1,n_2,n_{3^-},n_3) \in \{ (0,0,0,4),~(0,0,1,3),~(0,0,2,2),~(0,0,3,1),~(0,1,0,3),$\\ $(0,1,1,2),~
(0,1,2,1),~(0,2,0,2),~(0,2,1,1),~(1,0,0,3),~(1,0,1,2),~(1,0,2,1)\}$. Note that $$g_4(n_1,n_2,n_{3^-},n_3)=\left(1 - \frac{\sqrt{5}}{2}\right) n_2 + \left(2\sqrt{2} - \sqrt{5}\right) n_{3^-} + \left(8 - \sqrt{5}\right) n_3 - \frac{5\sqrt{5}}{2}.$$ By the monotonicity of $g_4(n_1,n_2,n_{3^-},n_3)$ and some numerical computation, we have that $$ 
\begin{aligned}
&g_4(0,0,1,3)\geq g_4(0,0,2,2)\geq g_4(0,0,3,1)=8 + 6\sqrt{2} - \frac{13\sqrt{5}}{2}\approx 1.95,\\
&g_4(0,1,0,3)\geq g_4(0,1,1,2)\geq g_4(0,1,2,1)=9 + 4\sqrt{2} - 6\sqrt{5}\approx 1.24,\\
&g_4(0,0,0,4)\geq g_4(0,2,0,2)\geq g_4(0,2,1,1)=10+2\sqrt{2} - \frac{11\sqrt{5}}{2}\approx0.53,\\
&g_4(1,0,0,3)\geq g_4(1,0,1,2)\geq g_4(1,0,2,1)=8 + 4\sqrt{2} - \frac{11\sqrt{5}}{2}\approx1.36.
\end{aligned}$$

Hence, when $n_3\geq 1$, $\mathcal{E}^+_3(G) \geq \frac{\sqrt{5}}{2} n$.
\end{proof}

\section[negative]{On the negative $3$-energy of a graph}\label{sec:negative-energy}

We employ a program to enumerate and compute all connected $n$-vertex graphs $G$ with $n \leq 10$, see \href{https://github.com/chenzb911/graph-energy-verification}{https://github.com/chenzb911/graph-energy-verification}.

\begin{lemma}\label{lem:UpToTenV}
Let $n$ be an integer such that $n\leq 10$. Every connected $n$-vertex graph $G$ satisfies $\mathcal{E}_3^-(G)\geq \mathcal{E}_3^-(K_n)$. Furthermore, if $G$ is isomorphic to neither $P_3$ nor $K_n$, then $\mathcal{E}_3^-(G)\geq n$.
\end{lemma}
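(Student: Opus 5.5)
This lemma is a finite statement, so I would prove it by exhaustive computer verification, mirroring \Cref{lem:PositiveUpToTenVertices}. The plan is to enumerate, for each $n\le 10$, all connected $n$-vertex graphs up to isomorphism. I would generate them with an isomorph-free generator such as \texttt{geng -c} from nauty; this gives roughly $1.2\times 10^{7}$ graphs at $n=10$. For each graph I compute the spectrum of $A_G$ numerically and evaluate $\mathcal{E}_3^-(G)=\sum_{\lambda_i<0}|\lambda_i|^3$. I then check two things: that $\mathcal{E}_3^-(G)\ge n-1=\mathcal{E}_3^-(K_n)$ (see \Cref{lem:complete graph+star+cycle}\ref{Kn}), and that $\mathcal{E}_3^-(G)\ge n$ whenever $G\not\cong K_n,P_3$.

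The cases $n\le 3$ I would dispose of by hand. For $K_1$ and $K_2$ there is nothing to prove beyond the first inequality, since these graphs are complete. For $n=3$, the connected graphs are $K_3$ and $P_3$, both excluded from the second claim, and $\mathcal{E}_3^-(P_3)=2\sqrt2>2$. Note that the second assertion implies the first for all non-complete $G$, so the real content is the bound $\mathcal{E}_3^-(G)\ge n$.

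The main obstacle is making floating-point verification rigorous near equality. I would handle it in three steps.
\begin{enumerate}
\item Use a symmetric eigensolver with a certified error bound. Since $A_G$ is symmetric with $\|A_G\|\le n-1$, backward-stable methods give eigenvalue errors of order $10^{-13}$.
\item For every graph, record the minimum slack $\mathcal{E}_3^-(G)-n$ over all non-excluded graphs.
\item Check that this minimum slack comfortably exceeds the accumulated error. Here "accumulated error" means the error in $|\lambda|^3$ summed over at most $n-1$ negative eigenvalues.
\end{enumerate}
If some graph has slack within tolerance of $0$, I would recheck it in exact arithmetic. Concretely, I would compute the characteristic polynomial over $\mathbb{Z}$, isolate the negative roots by Sturm sequences or interval arithmetic, and bound $\sum|\lambda|^3$ rigorously.

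Finally, I would cross-check the enumeration counts against the known numbers of connected graphs: $1,1,2,6,21,112,853,11117,261080,11716571$. This confirms completeness. The code would be published alongside the paper, as with the positive case.
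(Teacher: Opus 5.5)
Your proposal is correct and follows the same route as the paper, which establishes this lemma purely by exhaustive computer enumeration of all connected graphs on at most $10$ vertices (code posted on GitHub) and gives no further argument. Your additional safeguards go beyond what the paper documents and only make the verification more trustworthy: the hand check for $n\le 3$ (including $\mathcal{E}_3^-(P_3)=2\sqrt{2}>2$), certified floating-point error bounds with an exact-arithmetic fallback for near-tight cases, and a cross-check against the known counts of connected graphs.
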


We actually compute the spectrum of some small graphs on at most $6$ vertices using algorithms, and list the results in the table below. \Cref{table:3neg-energy} will be frequently referenced throughout the analysis of small configurations in the proof.

\begin{table}[htbp]
\centering
\renewcommand{\arraystretch}{2}
\setlength{\tabcolsep}{7pt}
\small
\begin{tabular}{|c|>{\centering\arraybackslash}m{2.8cm}|>{\centering\arraybackslash}m{8.5cm}|>{\centering\arraybackslash}m{1.5cm}|}
\hline
\textbf{Graph} & \textbf{Figure} & \textbf{Spectrum $\{\lambda_i\}$} & \textbf{$\mathcal{E}_3^{-}(G)$} \\
\hline

$P_3$ &
\begin{minipage}{2.8cm}
\centering
\begin{tikzpicture}[scale=0.6, every node/.style={draw,circle,fill=black,inner sep=1pt}]
\node (1) at (0,0) {}; \node (2) at (1,0) {}; \node (3) at (2,0) {};
\draw (1)--(2)--(3);
\end{tikzpicture}
\end{minipage}
& $\{1.4142,\, 0,\,-1.4142\}$ & $2\sqrt{2}$ \\
\hline

$P_4$ &
\begin{minipage}{2.8cm}
\centering
\begin{tikzpicture}[scale=0.6, every node/.style={draw,circle,fill=black,inner sep=1pt}]
\node (1) at (0,0) {}; \node (2) at (1,0) {}; \node (3) at (2,0) {}; \node (4) at (3,0) {};
\draw (1)--(2)--(3)--(4);
\end{tikzpicture}
\end{minipage}
& $\{1.618,\, 0.618,\,-0.618,\,-1.618\}$ & $4.472$ \\
\hline

$P_5$&
\begin{minipage}{2.8cm}
\centering
\begin{tikzpicture}[scale=0.6, every node/.style={draw,circle,fill=black,inner sep=1pt}]
\node (1) at (0,0) {}; \node (2) at (1,0) {}; \node (3) at (2,0) {}; \node (4) at (3,0) {}; \node (5) at (4,0) {};
\draw (1)--(2)--(3)--(4)--(5);
\end{tikzpicture}
\end{minipage}
& $\{1.732,\, 1,\, 0,\,-1,\,-1.732\}$ & $6.196$ \\
\hline

$K_{1,3}$ &
\begin{minipage}{2.8cm}
\centering
\begin{tikzpicture}[scale=0.6, every node/.style={draw,circle,fill=black,inner sep=1pt}]
\node (1) at (0,0) {}; \node (2) at (1,0) {}; \node (3) at (2,0) {}; \node (4) at (1,1) {};
\draw (1)--(2)--(3); \draw (2)--(4);
\end{tikzpicture}
\end{minipage}
& $\{1.732,\, 0,\, 0,\,-1.732\}$ & $5.196$ \\
\hline

$C_4$ &
\begin{minipage}{2.8cm}
\centering
\begin{tikzpicture}[scale=0.6, every node/.style={draw,circle,fill=black,inner sep=1pt}]
\node (1) at (0,0) {}; \node (2) at (1,0) {}; \node (3) at (1,1) {}; \node (4) at (0,1) {};
\draw (1)--(2)--(3)--(4)--(1);
\end{tikzpicture}
\end{minipage}
& $\{2,\, 0,\, 0,\,-2\}$ & $8$ \\
\hline

$C_5$ &
\begin{minipage}{2.8cm}
\centering
\begin{tikzpicture}[scale=0.4, every node/.style={draw,circle,fill=black,inner sep=1pt}]
\node (1) at (90:1) {}; 
\node (2) at (162:1) {}; 
\node (3) at (234:1) {}; 
\node (4) at (306:1) {}; 
\node (5) at (18:1) {};
\draw (1)--(2)--(3)--(4)--(5)--(1);
\end{tikzpicture}
\end{minipage}
& $\{2,~0.618,~0.618,~-1.618,~-1.618\}$ & $8.472$ \\
\hline

$H_1$ &
\begin{minipage}{2.8cm}
\centering
\begin{tikzpicture}[scale=0.6, every node/.style={draw,circle,fill=black,inner sep=1pt}]
\node (1) at (0,0) {}; \node (2) at (1,0) {}; \node (3) at (0.5,0.866) {}; \node (4) at (2,0) {};
\draw (1)--(2)--(3)--(1); \draw (2)--(4);
\end{tikzpicture}
\end{minipage}
& $\{2.170,\, 0.311,\,-1,\,-1.481\}$ & $4.249$ \\
\hline

$H_2$ &
\begin{minipage}{2.8cm}
\centering
\begin{tikzpicture}[scale=0.55, every node/.style={draw,circle,fill=black,inner sep=1pt}]
\node (1) at (0,0) {}; \node (2) at (1,0) {}; \node (3) at (0.5,0.866) {}; \node (4) at (2,0) {}; \node (5) at (3,0) {};
\draw (1)--(2)--(3)--(1); \draw (2)--(4)--(5);
\end{tikzpicture}
\end{minipage}
& $\{2.214,\, 1,\,-0.539,\,-1,\,-1.675\}$ & $5.857$ \\
\hline

$H_3$ &
\begin{minipage}{2.8cm}
\centering
\begin{tikzpicture}[scale=0.6, every node/.style={draw,circle,fill=black,inner sep=1pt}]
\node (1) at (0,0) {}; \node (2) at (1,0) {}; \node (3) at (2,0) {}; \node (4) at (1,1) {}; \node (5) at (3,0) {};
\draw (1)--(2)--(3)--(5); \draw (2)--(4);
\end{tikzpicture}
\end{minipage}
& $\{1.848,\, 0.765,\, 0,\,-0.765,\,-1.848\}$ & $6.757$ \\
\hline

$H_4$ &
\begin{minipage}{2.8cm}
\centering
\begin{tikzpicture}[scale=0.6, every node/.style={draw,circle,fill=black,inner sep=1pt}]
\node (1) at (0,0) {}; \node (2) at (1,0) {}; \node (3) at (0.5,0.866) {}; \node (4) at (2,0) {}; \node (5) at (1.5,0.866) {};
\draw (1)--(2)--(3)--(1); \draw (2)--(4); \draw (3)--(5);
\end{tikzpicture}
\end{minipage}
& $\{2.303,\, 0.618,\, 0,\,-1.303,\,-1.618\}$ & $6.447$ \\
\hline

$H_5$ &
\begin{minipage}{2.8cm}
\centering
\begin{tikzpicture}[scale=0.6, every node/.style={draw,circle,fill=black,inner sep=1pt}]
\node (1) at (0,0) {}; \node (2) at (1,0) {}; \node (3) at (0.5,0.866) {}; \node (4) at (2,0) {}; \node (5) at (1.5,0.866) {}; \node (6) at (3,0) {};
\draw (1)--(2)--(3)--(1); \draw (2)--(4); \draw (2)--(5)--(4); \draw (4)--(6);
\end{tikzpicture}
\end{minipage}
& $\{2.629,\, 1.230,\, 0.140,\,-1,\,-1.320,\,-1.678\}$ & $8.026$ \\
\hline

$H_6$ &
\begin{minipage}{2.8cm}
\centering
\begin{tikzpicture}[scale=0.6, every node/.style={draw,circle,fill=black,inner sep=1pt}]
\node (1) at (0,0) {}; \node (2) at (1,0) {}; \node (3) at (1,1) {}; \node (4) at (0,1) {}; \node (5) at (2,1) {};
\draw (1)--(2)--(3)--(4)--(1); \draw (3)--(5);
\end{tikzpicture}
\end{minipage}
& $\{2.136,\, 0.662,\, 0,\,-0.662,\,-2.136\}$ & $10.032$ \\
\hline

$H_7$ &
\begin{minipage}{2.8cm}
\centering
\begin{tikzpicture}[scale=0.6, every node/.style={draw,circle,fill=black,inner sep=1pt}]
\node (1) at (0,0) {}; \node (2) at (1,0) {}; \node (3) at (1,1) {}; \node (4) at (0,1) {};
\node (5) at (2,1) {};
\draw (1)--(2)--(3)--(4)--(1);
\draw (3)--(5)--(2);
\end{tikzpicture}
\end{minipage}
& $\{2.481,\, 0.688,\, 0,\,-1.170,\,-2\}$ & $9.602$ \\
\hline

$H_8$ &
\begin{minipage}{2.8cm}
\centering
\begin{tikzpicture}[scale=0.6, every node/.style={draw,circle,fill=black,inner sep=1pt}]
\node (1) at (0,0) {}; \node (2) at (1,0) {}; \node (3) at (1,1) {}; \node (4) at (0,1) {}; \node (5) at (2,0) {};
\draw (1)--(2)--(3)--(4)--(1); \draw (1)--(3); \draw (2)--(5);
\end{tikzpicture}
\end{minipage}
& $\{2.641,\, 0.724,\,-0.589,\,-1,\,-1.776\}$ & $6.804$ \\
\hline

$H_9$ &
\begin{minipage}{2.8cm}
\centering
\begin{tikzpicture}[scale=0.6, every node/.style={draw,circle,fill=black,inner sep=1pt}]
\node (1) at (0,0) {}; \node (2) at (1,0) {}; \node (3) at (1,1) {}; \node (4) at (0,1) {}; \node (5) at (2,1) {};
\draw (1)--(2)--(3)--(4)--(1); \draw (1)--(3); \draw (3)--(5);
\end{tikzpicture}
\end{minipage}
& $\{2.686,\, 0.335,\, 0,\,-1.271,\,-1.749\}$ & $7.406$ \\
\hline

$H_{10}$ &
\begin{minipage}{2.8cm}
\centering
\begin{tikzpicture}[scale=0.6, every node/.style={draw,circle,fill=black,inner sep=1pt}]
\node (1) at (0,0) {}; \node (2) at (1,0) {}; \node (3) at (1,1) {}; \node (4) at (0,1) {}; \node (5) at (2,1) {};
\draw (1)--(2)--(3)--(4)--(1); \draw (1)--(3); \draw (3)--(5)--(2);
\end{tikzpicture}
\end{minipage}
& $\{2.935,\, 0.618,\,-0.463,\,-1.473,\,-1.618\}$ & $7.530$ \\
\hline

$H_{11}$ &
\begin{minipage}{2.8cm}
\centering
\begin{tikzpicture}[scale=0.6, every node/.style={draw,circle,fill=black,inner sep=1pt}]
\node (1) at (0,0) {}; \node (2) at (1,0) {}; \node (3) at (1,1) {}; \node (4) at (0,1) {}; \node (5) at (2,0) {}; \node (6) at (1.5,1) {};
\draw (1)--(2)--(3)--(4)--(1); \draw (1)--(3); \draw (5)--(2)--(4); \draw (2)--(6)--(5);
\end{tikzpicture}
\end{minipage}
& $\{3.262,\, 1.340,\,-1,\,-1,\,-1,\,-1.602\}$ & $7.109$ \\
\hline

$H_{12}$ &
\begin{minipage}{2.8cm}
\centering
\begin{tikzpicture}[scale=0.6, every node/.style={draw,circle,fill=black,inner sep=1pt}]
\node (1) at (0,0) {};
\node (2) at (1,0) {};
\node (3) at (0.5,0.866) {}; 
\draw (1)--(2)--(3)--(1);

\node (4) at (-0.5,0.866) {};
\node (5) at (-1,0) {};
\draw (4)--(5);
\draw (4)--(1);
\draw (5)--(1);

\node (6) at (1.5,0.866) {};
\node (7) at (2,0) {};
\draw (6)--(7);
\draw (6)--(2);
\draw (7)--(2);

\node (8) at (0.5,1.4) {};
\draw (8)--(3);
\end{tikzpicture}
\end{minipage}
& $\{2.866,\, 1.732,\, 1,\, -0.211,\, -1,\, -1,\, -1.655,\, -1.732\}$ & $11.742$ \\
\hline

$H_{13}$ &
\begin{minipage}{2.8cm}
\centering
\begin{tikzpicture}[scale=0.45, every node/.style={draw,circle,fill=black,inner sep=1pt}]
\node (v)  at (0,1.5) {}; \node (a)  at (-1,0.5) {}; \node (b)  at (1,0.5) {};
\node (c)  at (0,0.5) {}; \node (d1) at (-2,0.5) {}; \node (d2) at (-3,0.5) {};
\draw (v)--(a)--(c)--(v); \draw (v)--(b); \draw (a)--(d1)--(d2);
\end{tikzpicture}
\end{minipage}
& $\{2.334,\,1.100,\,0.274,\,-0.595,\,-1.374,\,-1.740\}$ & $8.068$ \\
\hline

$H_{14}$ &
\begin{minipage}{2.8cm}
\centering
\begin{tikzpicture}[scale=0.45, every node/.style={draw,circle,fill=black,inner sep=1pt}]
\node (v)  at (0,1.5) {}; \node (a)  at (-1,0.5) {}; \node (b)  at (1,0.5) {};
\node (c)  at (0,0.5) {}; \node (d1) at (-2,0.5) {}; \node (d2) at (-3,0.5) {};
\draw (v)--(a)--(c)--(v); \draw (v)--(b)--(c); \draw (a)--(d1)--(d2);
\end{tikzpicture}
\end{minipage}
& $\{2.655,\,1.211,\,0,\,-1,\,-1,\,-1.866\}$ & $8.499$ \\
\hline

$H_{15}$ &
\begin{minipage}{2.8cm}
\centering
\begin{tikzpicture}[scale=0.45, every node/.style={draw,circle,fill=black,inner sep=1pt}]
\node (v)  at (0,1.5) {}; \node (a)  at (-1,0.5) {}; \node (b)  at (0,0.5) {};
\node (c)  at (1,0.5) {}; \node (d1) at (-2,0.5) {}; \node (d2) at (-3,0.5) {}; \node (e) at (0,-0.5) {};
\draw (v)--(a)--(b)--(v); \draw (v)--(c); \draw (a)--(d1)--(d2); \draw (b)--(e);
\end{tikzpicture}
\end{minipage}
& $\{2.438,\,1.139,\,0.618,\,0,\,-0.820,\,-1.618,\,-1.757\}$ & $10.679$ \\
\hline

$H_{16}$ &
\begin{minipage}{2.8cm}
\centering
\begin{tikzpicture}[scale=0.45, every node/.style={draw,circle,fill=black,inner sep=1pt}]
\node (v)  at (0,1.5) {}; \node (a)  at (-1,0.5) {}; \node (b)  at (1,0.5) {};
\node (c)  at (0,0.5) {}; \node (d1) at (-2,0.5) {}; \node (d2) at (-3,0.5) {}; \node (e) at (0,-0.5) {};
\draw (v)--(a)--(c)--(v); \draw (v)--(b)--(c); \draw (a)--(d1)--(d2); \draw (c)--(e);
\end{tikzpicture}
\end{minipage}
& $\{2.765,\,1.239,\,0.326,\,0,\,-1,\,-1.375,\,-1.955\}$ & $11.074$ \\
\hline

$H_{17}$ &
\begin{minipage}{2.8cm}
\centering
\begin{tikzpicture}[scale=0.45, every node/.style={draw,circle,fill=black,inner sep=1pt}]
\node (v)  at (0,1.5) {}; \node (a)  at (-0.8,0.5) {}; \node (b)  at (0.8,0.5) {};
\node (a1) at (-1.8,0.5) {}; \node (a2) at (-1.8,1.5) {}; \node (b1) at (1.8,0.5) {};
\draw (v)--(a)--(b)--(v); \draw (a1)--(a)--(a2); \draw (b)--(b1);
\end{tikzpicture}
\end{minipage}
& $\{2.445,\, 0.796,\, 0,\,0,\,
 -1.370,\, -1.872\}$ & $9.136$ \\
\hline
\end{tabular}
\caption{The spectrum and the negative $3$-energies of small graphs.}
\label{table:3neg-energy}
\end{table}

\subsection[smallG]{Some graphs $G$ satisfy $\mathcal{E}_3^-(G)\geq \mathcal{E}_3^-(K_n)$}

The following \Cref{lem:K_n-e,lem:K_{n-1}+v,lem:K_(n-2)+K_2,lem:star-plus} can be proved directly by analyzing their quotient matrices corresponding to certain vertex partitions. We present only the proof of the first lemma, and the remaining proofs are provided in the Appendix for the sake of completeness.

\begin{lemma}\label{lem:K_n-e}
Let $n$ be a positive integer with $n\geq 5$. If $G$ is a graph formed from $K_n$ by deleting one edge, then $\mathcal{E}_3^{-}(G)\geq n+1$.
\end{lemma}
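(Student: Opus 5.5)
Let $G=K_n-uv$. We compute the spectrum of $G$ exactly via an equitable partition and then bound the negative part.

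\textbf{Step 1: the equitable partition.} Partition $V(G)$ into $\{u,v\}$ and $W=V(G)\setminus\{u,v\}$, with $|W|=n-2$. This partition is equitable, and its quotient matrix is
$$Q=\begin{pmatrix}0 & n-2\\ 2 & n-3\end{pmatrix}.$$
So two eigenvalues of $G$ are the roots of $x^2-(n-3)x-2(n-2)=0$, namely
$$\mu_\pm=\frac{(n-3)\pm\sqrt{(n-3)^2+8(n-2)}}{2}=\frac{(n-3)\pm\sqrt{n^2+2n-7}}{2}.$$

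\textbf{Step 2: the remaining eigenvalues.} The other eigenvectors are orthogonal to the characteristic vectors of the parts.
\begin{itemize}
\item The vector $e_u-e_v$ is an eigenvector with eigenvalue $0$, since $u,v$ are nonadjacent and have the same neighbourhood $W$.
\item Any vector supported on $W$ with zero sum is an eigenvector with eigenvalue $-1$, because $W$ induces a clique and such a vector is orthogonal to the all-ones vector on $W$. This gives eigenvalue $-1$ with multiplicity $n-3$.
\end{itemize}
Hence $\operatorname{Spec}(G)=\{\mu_+,\,0,\,(-1)^{n-3},\,\mu_-\}$, and
$$\mathcal{E}_3^-(G)=(n-3)+|\mu_-|^3.$$

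\textbf{Step 3: the bound.} It now suffices to show $|\mu_-|^3\ge 4$ for $n\ge 5$, that is, $|\mu_-|\ge 4^{1/3}\approx1.587$. Write
$$|\mu_-|=\frac{\sqrt{n^2+2n-7}-(n-3)}{2}=\frac{4(n-2)}{\sqrt{n^2+2n-7}+(n-3)},$$
obtained by rationalizing, since $(n^2+2n-7)-(n-3)^2=8n-16$.

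This expression is increasing in $n$ and tends to $2$ as $n\to\infty$. To check monotonicity, write it as $4/\bigl(h(n)+1\bigr)$ with
$$h(n)=\frac{\sqrt{n^2+2n-7}-1}{n-2},$$
and verify that $h$ is decreasing. Alternatively, square both sides and clear denominators.

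So the minimum is at $n=5$, where $|\mu_-|=(\sqrt{28}-2)/2=\sqrt7-1\approx1.6458$. Then $|\mu_-|^3\approx4.457>4$, which gives $\mathcal{E}_3^-(G)\ge n+1$.

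The only delicate point is this monotonicity/threshold verification. The cleanest route is to show $\sqrt{n^2+2n-7}+(n-3)\le (n-2)\cdot 4/4^{1/3}$, i.e., $\sqrt{n^2+2n-7}\le cn-2c+3-n$ with $c=4^{2/3}\approx2.52$. The right-hand side is positive, so squaring reduces this to a quadratic inequality in $n$. That inequality can be checked to hold for all $n\ge5$ by examining its leading coefficient, $(c-1)^2-1>0$, and its value and derivative at $n=5$.
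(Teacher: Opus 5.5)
Your proof is correct and follows essentially the same route as the paper. Both arguments use an equitable partition to isolate the nontrivial eigenvalues and zero-sum vectors on the clique to get $-1$ with multiplicity $n-3$, giving $\mathcal{E}_3^-(G)=(n-3)+|\mu_-|^3$; both then use monotonicity of $|\mu_-|$ in $n$ and a check at $n=5$, where $|\mu_-|^3=10\sqrt{7}-22\approx 4.46$. The differences are cosmetic: your two-part partition $\{\{u,v\},W\}$ with $e_u-e_v$ giving the zero eigenvalue replaces the paper's three-part partition, whose cubic carries the zero root. Your closing squared-inequality argument also supplies the threshold verification that the paper only asserts.
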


\begin{proof}
Assume that $n\geq 5$. Let $V(G)=\{v_1, v_2, \ldots, v_n\}$ and assume that $v_1v_2\not\in E(G)$. We consider a vertex partition $\mathcal{P}$ of $V(G)=\{v_1\}\cup \{v_2\}\cup \{v_3, \ldots, v_n\}$. The adjacency matrix $A(G)$ and the corresponding quotient matrix $M_\mathcal{P}$ of $G/\mathcal{P}$ are as follows:
$$A(G)=
\begin{pmatrix}
0 & 0 & 1 & 1 & \cdots & 1 \\
0 & 0 & 1 & 1 & \cdots & 1 \\
1 & 1 & 0 & 1 & \cdots & 1 \\
1 & 1 & 1 & 0 & \cdots & 1 \\
\vdots & \vdots & \vdots & \vdots & \ddots & \vdots \\
1 & 1 & 1 & 1 & \cdots & 0
\end{pmatrix}
\text{~~and~~} M_\mathcal{P}=
\begin{pmatrix}
0 & 0 & n-2\\
0 & 0 & n-2\\
1 & 1 & n-3
\end{pmatrix}.$$
Observe that for the adjacency matrix $A(G)$, $(0,0,1,-1,0,\ldots,0), (0,0,1,0,-1,\ldots,0), \ldots,$ and $(0,0,1,0,\ldots,0,-1)$ are the eigenvectors of the eigenvalue $-1$ with multiple $n-3$, and $(x,y,z,z,z,\ldots,z)$ are the eigenvectors of other three eigenvalues. So we only consider the quotient matrix $G/\mathcal{P}$.

Note that $\det(\lambda I-M_\mathcal{P})=\lambda^3-(n-3)\lambda^2-(2n-4)\lambda$, which has one zero root and two nonzero roots, denoted by $\lambda_1$ and $\lambda_2$.
It is easy to get that 
$$\lambda_1=\frac{n-3+\sqrt{n^2+2n-7}}{2}>0 \text{~~and~~}
\lambda_2=\frac{n-3-\sqrt{n^2+2n-7}}{2}<0.$$
Thus the negative eigenvalues of $G$ are $-1$ (with multiplicity $n-3$) and $\lambda_2$. 
Then
$$\mathcal{E}_3^-(G)=|-1|^3\times (n-3)+|\lambda_2|^3=n-3+(\frac{\sqrt{n^2+2n-7}-(n-3)}{2})^3 \geq n+1.$$
The last inequality holds because $|\lambda_2|$ is increasing in $n$, and a direct computation shows that $|\lambda_2|^3 > 4$  for all $n \geq 5$.
\end{proof}

\begin{lemma}\label{lem:K_{n-1}+v}
Let $n$ be a positive integer with $n\geq 4$. If $G$ is a graph formed from $K_{n-1}$ by attaching a pendant vertex to one of its vertices, then $\mathcal{E}_3^{-}(G)\geq n$.
\end{lemma}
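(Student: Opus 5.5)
We will follow the same quotient-matrix approach as in the proof of \Cref{lem:K_n-e}. Label $V(K_{n-1})=\{v_1,\dots,v_{n-1}\}$ and let $u$ be the pendant vertex, adjacent only to $v_1$. Take the partition $\mathcal{P}$ of $V(G)$ into the three parts $\{u\}$, $\{v_1\}$ and $\{v_2,\dots,v_{n-1}\}$.

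First we account for most of the spectrum. Any vector supported on $\{v_2,\dots,v_{n-1}\}$ whose entries sum to zero is an eigenvector of $A(G)$ with eigenvalue $-1$. Examples are $e_{v_2}-e_{v_j}$ for $3\le j\le n-1$, which give multiplicity $n-3$. The remaining three eigenvalues have eigenvectors that are constant on each part, so they are exactly the eigenvalues of the quotient matrix
$$M_\mathcal{P}=\begin{pmatrix}0&1&0\\ 1&0&n-2\\ 0&1&n-3\end{pmatrix}.$$

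Next we analyse the quotient matrix. Expanding the determinant gives
$$f(\lambda):=\det(\lambda I-M_\mathcal{P})=\lambda^3-(n-3)\lambda^2-(n-1)\lambda+(n-3).$$
We locate its roots by sign changes:
\begin{itemize}
\item $f(\lambda)\to-\infty$ as $\lambda\to-\infty$, while $f(0)=n-3>0$;
\item $f(1)=2-n<0$;
\item $f(\lambda)\to+\infty$ as $\lambda\to+\infty$.
\end{itemize}
So $f$ has exactly one negative root $\mu$, one root in $(0,1)$, and one root larger than $1$. Hence the negative eigenvalues of $G$ are $-1$ with multiplicity $n-3$, together with $\mu$, and
$$\mathcal{E}_3^-(G)=(n-3)+|\mu|^3.$$

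It therefore suffices to show $|\mu|^3\ge 3$, i.e.\ $\mu\le -c$ with $c=3^{1/3}$. Because $f<0$ to the left of $\mu$ and $f>0$ on $(\mu,0]$, it is enough to check $f(-c)\ge 0$. Using $c^3=3$,
$$f(-c)=-3-(n-3)c^2+(n-1)c+(n-3)=n\,(1+c-c^2)+(3c^2-c-6).$$
Numerically $1+c-c^2\approx0.362>0$ and $3c^2-c-6\approx-1.20$. So $f(-c)$ is increasing in $n$, and already for $n=4$ it is about $1.448-1.20>0$. This gives $\mathcal{E}_3^-(G)\ge n$ for all $n\ge4$.

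The only delicate step is this final numerical check. We would justify it with explicit rational bounds, for example $1.44<c<1.45$, so that $c^2\in(2.07,2.11)$. These bound $1+c-c^2$ below by a positive constant and bound $3c^2-c-6$ below by roughly $-1.29$, which still leaves a positive margin at $n=4$. Alternatively, one could simply check the case $n=4$ (which is $H_1$, with $\mathcal{E}_3^-=4.249\ge 4$ from \Cref{table:3neg-energy}) and then apply the monotonicity argument for $n\ge5$.
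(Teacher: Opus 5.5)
Your proposal is correct and is essentially the paper's proof. It uses the same equitable partition $\{u\},\{v_1\},\{v_2,\dots,v_{n-1}\}$, the same cubic $\lambda^3-(n-3)\lambda^2-(n-1)\lambda+(n-3)$, and the same check that $f(-\sqrt[3]{3})=n(1+c-c^2)+(3c^2-c-6)>0$ for $n\ge 4$. Your sign-change argument ($f(0)>0$, $f(1)<0$) additionally justifies the uniqueness of the negative root, which the paper only asserts. Also, your enclosure $c^2>2.07$, $c<1.45$ actually gives the lower bound $-1.24$ for $3c^2-c-6$ rather than $-1.29$, though either bound suffices.
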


\begin{lemma}\label{lem:K_(n-2)+K_2}
Let $n$ be a positive integer with $n\geq 6$. If $G$ is a graph formed from $K_{n-2}$ by adding an extra edge $uv$ and connecting $u$ and $v$ to one same vertex of $K_{n-2}$, then $\mathcal{E}_3^{-}(G)>n+1$. 
\end{lemma}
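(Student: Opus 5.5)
The plan is to copy the quotient-matrix argument used for \Cref{lem:K_n-e}. Let $w$ be the vertex of $K_{n-2}$ adjacent to $u$ and $v$, and let $R$ be the other $n-3$ vertices of $K_{n-2}$. Take the partition $\mathcal{P}=\{u,v\}\cup\{w\}\cup R$; it is equitable.

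\emph{Step 1: the eigenvalue $-1$ and its multiplicity.} For two vertices of $R$, the vector equal to $1$ on one of them, $-1$ on the other and $0$ elsewhere is an eigenvector with eigenvalue $-1$. This gives multiplicity $n-4$. The vector $e_u-e_v$ is also an eigenvector with eigenvalue $-1$, because $uv\in E(G)$ and $u,v$ have the same neighbourhood outside $\{u,v\}$. So $-1$ has multiplicity at least $n-3$. All of these vectors are orthogonal to the vectors that are constant on the parts of $\mathcal{P}$. Hence the remaining three eigenvalues of $A(G)$ are exactly those of the quotient matrix
$$M_\mathcal{P}=\begin{pmatrix}1&1&0\\2&0&n-3\\0&1&n-4\end{pmatrix}.$$

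\emph{Step 2: the signs of the roots of the characteristic polynomial.} Expanding along the first row gives
$$p(\lambda)=\det(\lambda I-M_\mathcal{P})=(\lambda-1)\bigl(\lambda^2-(n-4)\lambda-(n-3)\bigr)-2(\lambda-n+4).$$
We have $p(0)=3n-11>0$, so $\det M_\mathcal{P}=-(3n-11)<0$. The trace $n-3$ is positive, so $M_\mathcal{P}$ has a positive Perron root. Since the product of the three roots is negative, exactly one root $\mu$ is negative and the other two are positive. Therefore the negative eigenvalues of $G$ are $-1$ with multiplicity $n-3$, together with $\mu$, and
$$\mathcal{E}_3^-(G)=n-3+|\mu|^3 .$$

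\emph{Step 3: locating $\mu$.} This is the only step needing care: one must show $|\mu|^3>4$, i.e.\ $\mu<-4^{1/3}\approx-1.587$. Direct evaluation gives
$$p(-2)=-3(n-1)+2(n-2)=-n-1<0$$
and
$$p(-1.6)=0.44n-2.616>0 \quad\text{for } n\geq 6.$$
So $\mu\in(-2,-1.6)$ and $|\mu|^3>1.6^3=4.096>4$. This yields $\mathcal{E}_3^-(G)>n-3+4=n+1$.

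The hypothesis $n\geq 6$ is used only for the signs of $p(-1.6)$ and $p(0)$; $p(0)>0$ already holds for $n\geq 4$. It remains to check the arithmetic of these evaluations.
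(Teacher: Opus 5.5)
Your proof is correct and is essentially the paper's quotient-matrix argument. Merging $\{u\}$ and $\{v\}$ into one cell gives the cubic $p(\lambda)$ with $f_n(\lambda)=(\lambda+1)p(\lambda)$, where $f_n$ is the paper's quartic, so the eigenvalue $-1$ coming from $e_u-e_v$ is split off directly; you also test at $-1.6$ instead of $-\sqrt[3]{4}$, and your determinant-plus-trace argument makes explicit the uniqueness of the negative root, which the paper only asserts.
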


\begin{lemma}\label{lem:star-plus}
Let $n$ be a positive integer. If $G$ is a graph formed from $K_{1,n}$ by subdividing $t$ edges each once, then  $\operatorname{Spec}(G)=\{\pm\sqrt{x_1},\pm\sqrt{x_2},0^{n-t-1},1^{t-1},(-1)^{t-1}\}$, where $x_1$ and $x_2$ are the roots of $x^2-(n+1)x+(n-t)=0$ with $x_1>x_2$. 
\end{lemma}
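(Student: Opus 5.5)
The graph $G$ here is $K_{1,n}$ with center $c$ and leaves $\ell_1,\dots,\ell_n$, in which $t$ of the spokes are subdivided: $c\!-\!m_i\!-\!\ell_i$ for $i\le t$, and $c\!-\!\ell_i$ for $i>t$. It has $n+t+1$ vertices. We check this count against the claimed spectrum: $2+(n-t-1)+2(t-1)=n+t+1$. The plan is to split $\mathbb{R}^{V(G)}$ into an explicit part built from eigenvectors and a small equitable-partition quotient, in the same way as the proof of \Cref{lem:K_n-e}.

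\emph{Step 1 (explicit eigenvectors).} We exhibit three families.
\begin{enumerate}
\item For the eigenvalue $0$, take vectors supported on the $n-t$ unsubdivided leaves with coordinate sum zero. The only neighbour of these leaves is $c$, so $A$ annihilates such a vector. This gives multiplicity $n-t-1$.
\item For the eigenvalues $\pm1$, take a vector $(\alpha_i)_{i\le t}$ with $\sum\alpha_i=0$, and put $x_{m_i}=\alpha_i$, $x_{\ell_i}=\pm\alpha_i$, and $0$ elsewhere. Then $(Ax)_{\ell_i}=\alpha_i=\pm x_{\ell_i}$ and $(Ax)_{m_i}=x_{\ell_i}+x_c=\pm\alpha_i$. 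At the centre, $(Ax)_c=\sum\alpha_i=0$. So each sign gives an eigenspace of dimension $t-1$.
\end{enumerate}
These eigenvectors are mutually orthogonal and orthogonal to the space $W$ of vectors that are constant on each class of the partition $\{c\},\{m_i\},\{\ell_i: i\le t\},\{\ell_i:i>t\}$. The dimensions add up to $(n-t-1)+2(t-1)+4=n+t+1$, so the remaining four eigenvalues are exactly the eigenvalues of the quotient matrix.

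\emph{Step 2 (quotient matrix).} The partition is equitable, with quotient
$$M=\begin{pmatrix}0&t&t&n-t\\1&0&1&0\\0&1&0&0\\1&0&0&0\end{pmatrix},$$
where the classes are ordered $c$, $m$, $\ell$ (subdivided), $\ell'$ (unsubdivided). Since $G$ is bipartite, the spectrum of $M$ is symmetric. We compute $\det(\lambda I-M)$ by expanding along the last row or column, and expect it to equal $\lambda^4-(n+1)\lambda^2+(n-t)$. Setting $x=\lambda^2$ then gives the roots $\pm\sqrt{x_1},\pm\sqrt{x_2}$. Both $x_j$ are positive when $t<n$, because the discriminant is $(n+1)^2-4(n-t)>0$ and the product $n-t$ is positive. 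If $t=n$ we get $x_2=0$, which is consistent with the statement, since then $0^{n-t-1}$ is empty and the zero eigenvalue comes from the quotient.

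The main obstacle is just bookkeeping. We need to confirm the characteristic polynomial, which only takes a short expansion. We also need to treat the degenerate cases $t=0$ and $t=n$, where a family in Step 1 would formally have dimension $-1$. In those cases the multiplicities must be read with the convention that a negative exponent cancels against a quotient eigenvalue; for instance, when $t=0$ the quotient $x_1=n+1$, $x_2=0$ degenerates consistently. I would state that $1\le t\le n-1$ is the intended range, or handle the edge cases directly.
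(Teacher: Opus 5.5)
Your approach is the paper's. You exhibit explicit eigenvectors for $0$ (multiplicity $n-t-1$) and $\pm1$ (multiplicity $t-1$ each), and you take the remaining four eigenvalues from the equitable partition $\{c\},\{m_i\},\{\ell_i:i\le t\},\{\ell_i:i>t\}$. Your eigenvector families check out. Your orthogonality-plus-dimension-count argument for why the quotient accounts for exactly the leftover eigenvalues is more explicit than the paper's.

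\textbf{The gap is in the quotient matrix.} The entry in row $c$, column $\ell$ must be $0$, not $t$, because the centre is adjacent only to the subdivision vertices $m_i$ and the unsubdivided leaves. As displayed, $M$ is not the quotient of any equitable partition: it violates $|V_i|\,b_{ij}=|V_j|\,b_{ji}$, since $1\cdot t\neq t\cdot 0$. Expanding $\det(\lambda I-M)$ along the last column gives
\[
\lambda^4-(n+1)\lambda^2-t\lambda+(n-t),
\]
which is not even in $\lambda$. This contradicts both your bipartiteness remark and the target polynomial. Since you only say you ``expect'' the polynomial, this is precisely the step that fails as written. With the entry corrected, as in the paper's matrix, the same expansion gives $\lambda\bigl(\lambda^3-(t+1)\lambda\bigr)-(n-t)(\lambda^2-1)=\lambda^4-(n+1)\lambda^2+(n-t)$. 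The rest of your argument then goes through.

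\textbf{Your edge-case remarks also need correcting.}
\begin{itemize}
\item For $t=0$, the roots of $x^2-(n+1)x+n$ are $x_1=n$ and $x_2=1$; the pair $n+1,0$ belongs to $t=n$. The formal eigenvalues $\pm\sqrt{x_2}=\pm1$ cancel against $1^{-1},(-1)^{-1}$, which recovers $\operatorname{Spec}(K_{1,n})$.
\item For $t=n$, the factor $0^{n-t-1}$ is $0^{-1}$, not empty. It cancels one of the two zeros $\pm\sqrt{x_2}$, matching the single zero of the genuine $3\times 3$ quotient (the fourth class is empty).
\end{itemize}
Restricting to $1\le t\le n-1$, as you suggest, is the clean fix. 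The paper's appendix version also asserts $x_1>x_2>0$, which likewise requires $t<n$.
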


Above, we computed the eigenvalues of some specific graphs directly from their adjacency matrices. From here onward, we focus on graphs possessing certain favorable structural properties.

\begin{lemma}\label{lem:UnionOfCliques}
Let $G$ be a connected $n$-vertex graph, not isomorphic to $K_n$ or $P_3$. If $G$ contains a vertex $v$ such that $G-v$ is a disjoint union of cliques, then $\mathcal{E}_3^{-}(G) \geq n$.
\end{lemma}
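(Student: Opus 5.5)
The plan is to compute the spectrum of $G$ exactly from an equitable partition, then reduce the claim to a bound on the negative eigenvalues of a small quotient matrix.

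Write $G-v=C_1\cup\cdots\cup C_k$ with $C_i\cong K_{a_i}$. Since $G$ is connected, $v$ has a nonempty neighbourhood $N_i:=N(v)\cap V(C_i)$ in every $C_i$. Let $R_i:=V(C_i)\setminus N_i$, which may be empty. Consider the partition $\mathcal{P}$ of $V(G)$ into $\{v\}$, the sets $N_i$, and the nonempty sets $R_i$. This partition is equitable:
\begin{itemize}
\item every vertex of $N_i$ is adjacent to $v$, to the other $|N_i|-1$ vertices of $N_i$, and to all of $R_i$;
\item every vertex of $R_i$ is adjacent to all of $N_i$, to the other $|R_i|-1$ vertices of $R_i$, and not to $v$.
\end{itemize}
Now take any vector supported on a single part $X$ with coordinate sum $0$. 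It is an eigenvector of $A_G$ for eigenvalue $-1$, because inside $X$ the matrix acts as $J-I$, and every vertex outside $X$ sees all of $X$ or none of it. These vectors give $n-p$ eigenvalues equal to $-1$, where $p=|\mathcal{P}|$. The remaining $p$ eigenvalues are those of the $p\times p$ quotient matrix $M_{\mathcal{P}}$, exactly as in the proof of \Cref{lem:K_n-e}. Hence
$$\mathcal{E}_3^-(G)=(n-p)+\mathcal{E}_3^-(M_{\mathcal{P}}),$$
and it suffices to prove $\mathcal{E}_3^-(M_{\mathcal{P}})\ge p$ whenever $G\not\cong K_n,P_3$. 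Here $p=1+k+r$, where $r$ is the number of nonempty $R_i$.

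\textbf{Step 1: many cliques.} Choose one vertex of each $N_i$. These vertices lie in different cliques, so together with $v$ they induce $K_{1,k}$. The spectrum of $M_{\mathcal{P}}$ is the part of $\operatorname{Spec}(G)$ not accounted for by the $-1$'s, so I would bound $\mathcal{E}_3^-(G)$ from below directly with \Cref{thm: additivity}. Use the blocks $K_{1,k}$ and the cliques $C_i$ minus their chosen vertex. By \Cref{lem:complete graph+star+cycle}, this gives
$$\mathcal{E}_3^-(G)\ \ge\ k^{3/2}+\sum_{i}\max\{a_i-2,0\}\ \ge\ k^{3/2}+n-1-2k,$$
which is at least $n$ once $k\ge5$. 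For $k=4$ the same estimate works as soon as some $a_i=1$.

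\textbf{Step 2: one clique ($k=1$).} Here $G$ is $K_{n-1}$ plus a vertex joined to $d$ vertices. If $d=n-1$ then $G=K_n$, which is excluded. Otherwise $M_{\mathcal{P}}$ is the $3\times3$ matrix
$$M_{\mathcal{P}}=\begin{pmatrix}0&d&0\\ 1&d-1&n-1-d\\ 0&d&n-2-d\end{pmatrix}.$$
I would show that $M_{\mathcal{P}}$ has exactly one negative eigenvalue $\mu$, with $|\mu|^3\ge3$ except for $P_3$ (the case $n=3$, $d=1$). The approach is to evaluate the characteristic polynomial at $\lambda=-3^{1/3}$ and check its sign, in the same spirit as \Cref{lem:K_n-e,lem:K_{n-1}+v}. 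Small $n$ are covered by \Cref{lem:UpToTenV}.

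\textbf{Step 3: the main obstacle ($2\le k\le4$).} For $k\in\{2,3,4\}$ the star bound from Step 1 is too weak, so I would work with $M_{\mathcal{P}}$ directly; it has size at most $2k+1\le9$. The first thing to try is a refined super-additivity split: group $v$ together with $C_1$, and keep the other cliques separate. By Step 2, the block $G[\{v\}\cup V(C_1)]$ contributes at least $a_1+1$ unless it is complete or $P_3$. Each remaining clique contributes $a_i-1$. This falls short of $n$ by $k-2$, so for $k=2$ it is already enough, after choosing $C_1$ with $N_1\ne V(C_1)$. When every $N_i=V(C_i)$, all $R_i$ are empty, so $M_{\mathcal{P}}$ is a $(k+1)\times(k+1)$ arrowhead matrix with diagonal entries $a_i-1$. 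Its negative eigenvalues can be controlled through the secular equation $\lambda=\sum_i a_i/(\lambda-a_i+1)$.

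For $k=3,4$ in general, I expect to need an explicit analysis of the characteristic polynomial of $M_{\mathcal{P}}$. The tool would be Cauchy interlacing of the negative eigenvalues against suitable principal submatrices, namely the $k=1$ blocks. Graphs with $n\le10$, where the estimates are tightest, are dispatched by \Cref{lem:UpToTenV}.
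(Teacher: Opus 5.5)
Your reduction $\mathcal{E}_3^-(G)=(n-p)+\mathcal{E}_3^-(M_{\mathcal P})$ is correct. Step 2 is also a sound alternative treatment of $k=1$: the cubic $\lambda^3-(n-3)\lambda^2-(n+d-2)\lambda+d(n-2-d)$ has a single negative root, and the sign check at $-3^{1/3}$ works for all $1\le d\le n-2$ once $n\ge4$.

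The gap is Step 3, which is where the content of the lemma lies. First, your count is off. The split into $G[\{v\}\cup V(C_1)]$ and $C_2,\dots,C_k$ gives at best $(a_1+1)+\sum_{i\ge2}(a_i-1)=n-(k-1)$, so the shortfall is $k-1$, not $k-2$, and for $k=2$ you only get $n-1$. Sharpening the block bound cannot fix this. Suppose $v$ is complete to $C_2$ and sees one vertex of $C_1\cong K_3$. Then your choice rule forces the block to be $H_1$, with $\mathcal{E}_3^-\approx4.249$, and the split yields about $n-0.75$. If instead $C_1\cong K_2$ with $|N_1|=1$, the block is $P_3$, which Step 2 excludes. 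When $v$ is complete to every clique, the split is not available at all, and the secular-equation idea is only mentioned. For $k=3$, and for $k=4$ with all $a_i\ge2$, you announce an analysis of $M_{\mathcal P}$ but do not carry it out.

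The missing idea is to delete only $v$ and use Cauchy interlacing (\Cref{thm:alternate}) instead of super-additivity. In Step 1 you lose one unit per clique by deleting a vertex from each $C_i$. Interlacing between $G$ and $G-v$ loses only one unit in total. Indeed, $G-v$ has $n-1-k$ eigenvalues equal to $-1$. Each negative $\lambda_i(G-v)$ with $i\le n-2$ satisfies $\lambda_i(G-v)\ge\lambda_{i+1}(G)$, so it is dominated in absolute value. That leaves $\lambda_n(G)\le-\sqrt{k}$, which comes from the induced $K_{1,k}$. Hence
\[
\mathcal{E}_3^-(G)\ \ge\ \mathcal{E}_3^-(G-v)-|\lambda_{n-1}(G-v)|^3+|\lambda_n(G)|^3\ \ge\ (n-1-k)-1+k^{3/2},
\]
which is at least $n$ for every $k\ge3$. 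This replaces Step 1 and settles $k=3,4$ at once. For $k=2$ the same inequality needs $|\lambda_n(G)|^3\ge4$. The paper assumes $n\ge11$, so the larger clique has at least $5$ vertices. It then finds an induced $H_2$, $H_8$ or $H_{11}$ (with $|\lambda_{\min}|^3>4$), according to how $v$ attaches to the two cliques. The only leftover configuration is $K_{n-1}$ with a pendant vertex, which is handled by the quotient computation of \Cref{lem:K_{n-1}+v}.
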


\begin{proof}
Let $G$ be a connected $n$-vertex graph, not isomorphic to $K_n$ or $P_3$. By~\Cref{lem:UpToTenV} we may assume that $n \geq 11$. Assume that $v$ is a vertex of $G$ such that $G-v$ is a disjoint union of cliques $G_1, \ldots, G_\ell$. Note that selecting one vertex $v_i$ from each of $G_i$ for $i\in \{1,\ldots, \ell\}$, together with the vertex $v$, yields an induced subgraph of $G$ isomorphic to $K_{1,\ell}$. By~\Cref{lem:complete graph+star+cycle}\ref{K1n}, $\operatorname{Spec}(K_{1,\ell})=\{\sqrt{\ell}, 0^{\ell-1},-\sqrt{\ell}\}$. Thus $|\lambda_n(G)| \geq |\lambda_{\ell+1}(K_{1,\ell})|= \sqrt{\ell}$. Since $G-v$ is a disjoint union of $\ell$ cliques, by~\Cref{lem:complete graph+star+cycle}\ref{Kn}, $\mathcal{E}_3^{-}(G-v)\geq n-\ell-1$. By~\Cref{thm:alternate} (the Interlacing Theorem), we have that
$$\mathcal{E}_3^{-}(G) \geq \mathcal{E}_3^{-}(G-v) - |\lambda_{n-1}(G - v)|^3 + |\lambda_n(G)|^3\geq(n-\ell-1)-1+\ell^{\frac{3}{2}}.$$ If $\ell\geq 3$, then we immediately get $\mathcal{E}_{3}^{-}(G)\geq n+0.19>n$. It remains to consider whether $\ell=1$ or $\ell=2$.

\noindent
\textbf{Case 1:} $\ell=1$. Since $G$ is not isomorphic to $K_n$, there is at least one vertex of $G-v$ that is not adjacent to $v$. As $n\geq 11$, either $G$ is isomorphic to $K_n-e$ (i.e., exactly one vertex in $G-v$ is not adjacent to $v$), or $G$ contains an induced subgraph isomorphic to $H_1$, as shown in~\Cref{fig:H2}. 
\begin{figure}[ht]
\centering
\begin{tikzpicture}[every node/.style={draw,circle,fill=black,inner sep=1.5pt}]
\begin{scope}[xshift=4cm, rotate=0]
\node (2) at (1,1.2) {}; 
\node (3) at (2,0.6) {}; 
\node (4) at (1,0) {}; 
\node (5) at (3,0.6) {};
\draw (2)--(3)--(4); 
\draw (2)--(4); 
\draw (3)--(5);
\node[draw=none, fill=none, below=2pt] at (5) {$v$};
\end{scope}
\end{tikzpicture}
\caption{The configuration $H_1$ for Case 1 in \Cref{lem:UnionOfCliques}}
\label{fig:H2}
\end{figure}
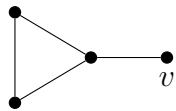 

The first case has been verified in~\Cref{lem:K_n-e}. For the second case, noting that  $\lambda_{\mathrm{min}}(H_1)=\lambda_{4}(H_1)\approx -1.481$ by~\Cref{table:3neg-energy}, again by~\Cref{thm:alternate},
$$\mathcal{E}_{3}^{-}(G) 
\geq \mathcal{E}_{3}^{-}(G - v) - |\lambda_{n-1}(G - v)|^3 + |\lambda_n(G)|^3 \geq (n - 2) - 1 + |\lambda_{\mathrm{min}}(H_1)|^3
= n + 0.24>n.$$

\noindent
\textbf{Case 2}. $\ell=2$. Assume that $G-v$ consists of two disjoint cliques $G_1$ and $G_2$. Without loss of generality, we assume that $|G_1|\geq |G_2|$. Noting that $n\geq 11$, $|G_1|\geq 5$ and $|G_2|\geq 1$. 

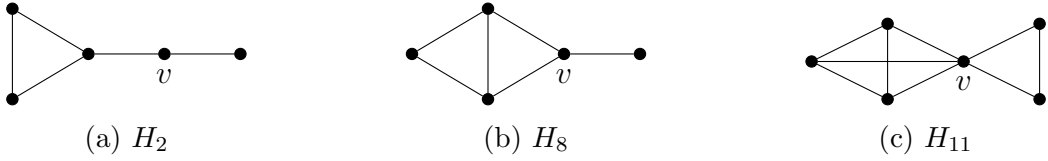
\begin{figure}[ht]
\centering
\begin{subfigure}[t]{.32\textwidth}
\centering
\begin{tikzpicture}[every node/.style={draw,circle,fill=black,inner sep=1.5pt}]
\node (2) at (1,1.2) {}; 
\node (3) at (2,0.6) {}; 
\node (4) at (1,0) {}; 
\node (5) at (3,0.6) {};
\node (6) at (4,0.6) {}; 
\draw (2)--(3)--(4); 
\draw (2)--(4); 
\draw (3)--(5)--(6);
\node[draw=none, fill=none, below=2pt] at (5) {$v$};
\end{tikzpicture}
\caption{$H_2$}
\label{fig:H2inLem3.6}
\end{subfigure}
\begin{subfigure}[t]{.32\textwidth}
\centering
\begin{tikzpicture}[every node/.style={draw,circle,fill=black,inner sep=1.5pt}]
\node (1) at (0,0.6) {}; 
\node (2) at (1,1.2) {}; 
\node (3) at (2,0.6) {}; 
\node (4) at (1,0) {}; 
\node (5) at (3,0.6) {}; 
\draw (1)--(2)--(3)--(4)--(1); 
\draw (2)--(4); 
\draw (3)--(5);
\node[draw=none, fill=none, below=2pt] at (3) {$v$};
\end{tikzpicture}
\caption{$H_{8}$}
\label{fig:H8inLem3.6}
\end{subfigure}
\begin{subfigure}[t]{.32\textwidth}
\centering
\begin{tikzpicture}[every node/.style={draw,circle,fill=black,inner sep=1.5pt}]
\node (1) at (1,0) {};
\node (2) at (0,0.5) {};
\node (5) at (2,0.5) {};
\node (4) at (1,1) {};
\node (7) at (3,1) {};
\node (8) at (3,0) {};
\draw (1) -- (2) ;
\draw (1) -- (4);
\draw (2) -- (4);
\draw (1) -- (5);
\draw (2) -- (5);
\draw (4) -- (5);
\draw (7) -- (5);
\draw (8) -- (5);
\draw (8) -- (7);
\node[draw=none, fill=none, below=2pt] at (5) {$v$};
\end{tikzpicture}
\caption{$H_{11}$}
\label{fig:H11inLem3.6}
\end{subfigure}
\caption{Configurations $H_2$, $H_8$, and $H_{11}$ for Case 2 in \Cref{lem:UnionOfCliques}}
\label{fig:Case2inLem3.7}
\end{figure}

If $v$ has at most $3$ neighbors in $G_1$, then there are at least two vertices in $G_1$ that are not adjacent to $v$. Hence, $G$ contains an induced subgraph isomorphic to $H_2$, as shown in~\Cref{fig:H2inLem3.6}. 
By~\Cref{table:3neg-energy}, $\lambda_{\mathrm{min}}(H_2)=\lambda_{5}(H_2)\approx -1.675$, thus we have that $|\lambda_{\mathrm{min}}(H_2)|^{3}> 4$. By~\Cref{thm:alternate},
$$\mathcal{E}_{3}^{-}(G) \geq\mathcal{E}_{3}^{-}(G-v)-|\lambda_{n-1}(G-v)|^{3}+ |\lambda_{n}(G)|^{3}\geq(n-3)-1+|\lambda_{5}(H_2)|^{3}>n.$$

Now we consider the case when $v$ has at least $4$ neighbors in $G_1$. Recall that $|G_1|\geq 5$ and $|G_2|\geq 1$. We consider the following two possibilities.

Assume that $|G_2|=1$. If there is a vertex of $G_1$ not adjacent to $v$, since $|G_1|\geq 5$, then $G$ contains an induced graph isomorphic to $H_8$, as shown in~\Cref{fig:H8inLem3.6}. By~\Cref{table:3neg-energy}, $\lambda_{\mathrm{min}}(H_8)=\lambda_{5}(H_8)=-1.776$, thus we have that $|\lambda_{\mathrm{min}}(H_8)|^{3}> 4$. By the Interlacing Theorem (\Cref{thm:alternate}),
$$\mathcal{E}_{3}^{-}(G) \geq\mathcal{E}_{3}^{-}(G-v)-|\lambda_{n-1}(G-v)|^{3}+ |\lambda_{n}(G)|^{3}\geq(n-3)-1+|\lambda_{5}(H_8)|^{3}>n.$$
Thus, all the vertices of $G_1$ are adjacent to $v$. So $G$ must be isomorphic to $K_{n-1}$ attached with a pendant vertex (which is exactly $v$), in which case by~\Cref{lem:K_{n-1}+v}, $\mathcal{E}_{3}^{-}(G) \geq n$.

Assume that $|G_2|\geq 2$. In this case, if there is a vertex of $G_2$ not adjacent to $v$, then $G$ contains $H_2$ as an induced graph, we are done. So now $v$ is adjacent to at least two vertices of $G_2$, in which case $G$ contains an induced subgraph isomorphic to $H_{11}$, as shown in~\Cref{fig:H11inLem3.6}. By~\Cref{table:3neg-energy}, $|\lambda_{\mathrm{min}}(H_{11})|^{3}=|\lambda_{6}(H_{11})|^{3}\geq 1.602^3 >4$. By~\Cref{thm:alternate}, 
$$\mathcal{E}_{3}^{-}(G) \geq\mathcal{E}_{3}^{-}(G-v)-|\lambda_{n-1}(G-v)|^{3}+ |\lambda_{n}(G)|^{3}\geq (n-3)-1+|\lambda_{6}(H_{11})|^{3}>n.$$
We complete the proof of the lemma.
\end{proof}

\begin{lemma}\label{lem:oneP3+onecliques}
Let $G$ be a connected $n$-vertex graph. Then $\mathcal{E}_3^{-}(G) \geq n$ if the following conditions hold: there is a vertex $u\in V(G)$ such that $V(G)\setminus \{u\}$ admits a partition $\{X_1, X_2\}$ such that $G[X_1]$ is an induced $P_3$, $G[X_2]$ is a clique, and there is a vertex $v\in N(u)\cap X_2$ with $N(v)\cap X_1=\emptyset$.  
\end{lemma}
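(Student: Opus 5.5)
Since $n=|X_2|+4$, we have $\mathcal{E}_3^-(G[X_2])=|X_2|-1=n-5$. The plan is to show that attaching $u$ and the $P_3$ on $X_1$ to the clique contributes at least $5$ more negative $3$-energy. Two tools are available: the super-additivity of \Cref{thm: additivity} and the Interlacing Theorem. By \Cref{lem:UpToTenV} we may assume $n\ge 11$, so $|X_2|\ge 7$. Let $K=G[X_2]$ and write $X_1=\{a,b,c\}$, where $b$ is the middle vertex of the path.

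\textbf{First attempt: partition and super-additivity.} Take the partition $V(G)=\bigl(X_2\setminus\{v\}\bigr)\cup\bigl(X_1\cup\{u,v\}\bigr)$. The first block is a clique on $n-5$ vertices and contributes $n-6$. So it suffices that the $5$-vertex induced subgraph $H=G[X_1\cup\{u,v\}]$ has $\mathcal{E}_3^-(H)\ge 6$.

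In $H$, the vertex $v$ is pendant to $u$, since $v$ has no neighbours in $X_1$. The vertex $u$ has some set of neighbours in the $P_3$ (nonempty if $G$ is connected through $u$; possibly some of $X_1$ is joined to $X_2$ instead). The possible $H$ are therefore small: $P_5$ ($6.196$), $H_3$ ($6.757$), $H_4$, $H_2$, $K_{1,3}$ plus a pendant, and so on. From \Cref{table:3neg-energy}, several of these exceed $6$, but $H_2$ ($5.857$) and $H_4$ ($6.447$) are borderline. I would therefore not rely on this partition alone.

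\textbf{Second attempt: interlacing.} The more robust approach mirrors \Cref{lem:UnionOfCliques}. Remove $u$. Then $G-u$ consists of $K$ together with $X_1$, possibly joined to $K$ by edges not through $v$. If $X_1$ sends no edges to $X_2$, then $G-u=K\cup P_3$, and
\[
\mathcal{E}_3^-(G-u)=(n-5)+2\sqrt2 .
\]
By interlacing,
\[
\mathcal{E}_3^-(G)\ge \mathcal{E}_3^-(G-u)-|\lambda_{n-1}(G-u)|^3+|\lambda_n(G)|^3
=(n-5)+2\sqrt2-2\sqrt2+|\lambda_n(G)|^3 .
\]
This only gives $n-5+|\lambda_n|^3$, which is too weak. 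A better bookkeeping is to delete $u$ and keep all negative eigenvalues except possibly one. Interlacing gives
\[
\mathcal{E}_3^-(G)\ge \sum_{i\ge 2}\bigl|\lambda_{i}^{-}(G-u)\bigr|^3+|\lambda_n(G)|^3 ,
\]
where the sum runs over all negative eigenvalues of $G-u$ except the smallest. With the $P_3$ component this gives $(n-5)+|\lambda_n(G)|^3$, since the $-\sqrt2$ is the most negative eigenvalue and is dropped.

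We then need $|\lambda_n(G)|^3\ge 5$, that is $|\lambda_n(G)|\ge 1.71$. To get this, exhibit an induced subgraph of $G$ with smallest eigenvalue below $-1.71$: for instance $K_{1,3}$ (smallest eigenvalue $-\sqrt3$) or one of $P_5$, $H_3$, $H_6$, $H_9$, $H_{13}$, $H_{17}$. Such a subgraph is built from $u$, $v$, a further clique vertex $w$ with $uw\notin E$ or $uw\in E$, and the neighbours of $u$ in $X_1$. If $u$ has a non-neighbour in $X_2$, a $P_4$- or claw-type structure through $v$ is available.

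\textbf{Main obstacle.} The case analysis is the hard part. It depends on how $u$ attaches to $X_1$ and to $X_2$, and on whether $X_1$ has edges into $X_2\setminus\{v\}$. In that last case $G-u$ is no longer $K\cup P_3$, but it still has energy at least $(n-5)+|\cdot|$ by super-additivity. In each subcase I would find an induced configuration from \Cref{table:3neg-energy} whose smallest eigenvalue cubed is at least $5$. Where none is available, I would fall back to the partition argument with a better block choice, for example keeping $v$ in the clique block and moving a different clique vertex into $H$. Tight configurations such as $u$ adjacent to all of $X_2$ and only to $b$ would be checked directly against the table values ($H_{13}$, $H_{15}$, $H_{17}$).
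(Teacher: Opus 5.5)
Your first attempt is exactly the paper's proof, and it already finishes the argument. The gap is that you discarded it because of a faulty enumeration, and the interlacing plan you put in its place is never carried out.

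In $H=G[X_1\cup\{u,v\}]$ the vertex $v$ is pendant at $u$ and $abc$ is an induced path, so $H$ is determined by $S=N(u)\cap X_1$. For $S\neq\emptyset$ there are, up to the symmetry $a\leftrightarrow c$, only five cases:
\begin{itemize}
\item $S=\{a\}$ gives $P_5$, with $\mathcal{E}_3^-=6.196$;
\item $S=\{b\}$ gives $H_3$, with $\mathcal{E}_3^-=6.757$;
\item $S=\{a,b\}$ gives $H_4$, with $\mathcal{E}_3^-=6.447$;
\item $S=\{a,c\}$ gives $H_6$, with $\mathcal{E}_3^-=10.032$;
\item $S=\{a,b,c\}$ gives $H_9$, with $\mathcal{E}_3^-=7.406$.
\end{itemize}
These values come from \Cref{table:3neg-energy}, and all are at least $6$. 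So $H_4$ is not borderline. $H_2$ cannot occur at all: whenever $S$ contains two adjacent path vertices, $u$ and $b$ both lie on a triangle and each carries its own pendant, which gives $H_4$ (or $H_9$ if $S=X_1$). Edges between $X_1$ and $X_2\setminus\{v\}$ are irrelevant, because \Cref{thm: additivity} only uses the diagonal blocks. Hence $\mathcal{E}_3^-(G)\ge 6+(n-6)=n$.

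The degenerate case $S=\emptyset$, where $H\cong P_3\cup K_2$ has $\mathcal{E}_3^-=1+2\sqrt2<6$, is tacitly excluded in the paper too. It never arises where the lemma is applied, since there $u$ is adjacent to the $P_3$.

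The interlacing route, as written, is not a proof. Your ``better bookkeeping'' gives the same bound $(n-5)+|\lambda_n(G)|^3$ that you had just called too weak. So everything rests on $|\lambda_n(G)|^3\ge 5$, which you never establish, and the configurations you name do not give it in general.

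For a concrete failure, take $G-u=K\cup P_3$, $S=\{a,b\}$, and $u$ adjacent to all of $X_2$. Then $G$ is the chain of cliques $X_2\cup\{u\}$, $\{u,a,b\}$, $\{b,c\}$, glued at $u$ and $b$. One checks directly that $G$ has no induced $K_{1,3}$, $P_5$, $C_4$ or $K_4-e$. Hence it contains none of $K_{1,3}$, $P_5$, $H_3$, $H_6$, $H_9$, $H_{13}$, $H_{17}$. Meanwhile $H_4$ itself only gives $1.618^3\approx 4.24<5$. You would need a new, larger configuration whose spectrum is computed separately.

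There is a second unsupported step. When $X_1$ has edges into $X_2\setminus\{v\}$, super-additivity bounds $\mathcal{E}_3^-(G-u)$ from below. It does not control the term $|\lambda_{n-1}(G-u)|^3$ that interlacing forces you to subtract.

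The fix is simply to complete the enumeration in your first attempt.
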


\begin{proof}
Assume that there is a vertex $u\in V(G)$ such that $V(G)\setminus \{u\}$ admits a partition $\{X_1, X_2\}$ such that $G[X_1]$ is an induced $P_3$, $G[X_2]$ is a clique, and there is a vertex $v\in N(u)\cap X_2$ with $N(v)\cap X_1=\emptyset$. Note that $H:=G[X_1\cup\{u,v\}]$ has five vertices and is isomorphic to one of $P_5$, $H_3$, $H_4$, $H_6$, and $H_9$, as depicted in~\Cref{fig:oneQ_1+oneC_1}. 
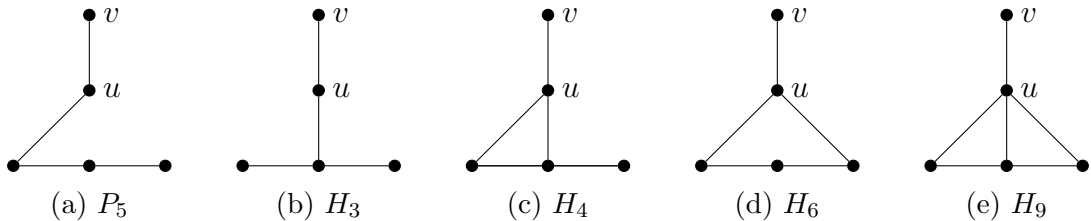
\begin{figure}[htbp]
\centering 
\begin{subfigure}[t]{.18\textwidth}
\centering  
\begin{tikzpicture}[every node/.style={draw,circle,fill=black,inner sep=1.5pt}]
\node (1) at (0,-1) {};
\node (2) at (1,-1) {};
\node (3) at (2,-1) {};
\node (4) at (1,0) {}; 
\node (5) at (1,1) {}; 
\draw (1)--(2)--(3);
\draw (1)--(4)--(5);
\node[draw=none, fill=none, right=2pt] at (4) {$u$};
\node[draw=none, fill=none, right=2pt] at (5) {$v$};
\end{tikzpicture}
\caption{$P_5$}
\label{fig:2} 
\end{subfigure}
\begin{subfigure}[t]{.18\textwidth}
\centering  
\begin{tikzpicture}[every node/.style={draw,circle,fill=black,inner sep=1.5pt}]
\node (1) at (0,-1) {};
\node (2) at (1,-1) {};
\node (3) at (2,-1) {};
\node (4) at (1,0) {}; 
\node (5) at (1,1) {}; 
\draw (1)--(2)--(3);
\draw (2)--(4)--(5);
\node[draw=none, fill=none, right=2pt] at (4) {$u$};
\node[draw=none, fill=none, right=2pt] at (5) {$v$};
\end{tikzpicture}
\caption{$H_3$}
\label{fig:1} 
\end{subfigure}
\begin{subfigure}[t]{.18\textwidth}
\centering  
\begin{tikzpicture}[every node/.style={draw,circle,fill=black,inner sep=1.5pt}]
\node (1) at (0,-1) {};
\node (2) at (1,-1) {};
\node (3) at (2,-1) {};
\node (4) at (1,0) {}; 
\node (5) at (1,1) {}; 
\draw (1)--(2)--(3);
\draw (2)--(4)--(1);
\draw (4)--(5);
\draw (1)--(3);
\node[draw=none, fill=none, right=2pt] at (4) {$u$};
\node[draw=none, fill=none, right=2pt] at (5) {$v$};
\end{tikzpicture}
\caption{$H_4$}
\label{fig:3} 
\end{subfigure}
\begin{subfigure}[t]{.18\textwidth}
\centering  
\begin{tikzpicture}[every node/.style={draw,circle,fill=black,inner sep=1.5pt}]
\node (1) at (0,-1) {};
\node (2) at (1,-1) {};
\node (3) at (2,-1) {};
\node (4) at (1,0) {}; 
\node (5) at (1,1) {}; 
\draw (1)--(2)--(3);
\draw (1)--(4)--(5);
\draw (3)--(4);
\node[draw=none, fill=none, right=2pt] at (4) {$u$};
\node[draw=none, fill=none, right=2pt] at (5) {$v$};
\end{tikzpicture}
\caption{$H_6$}
\label{fig:4} 
\end{subfigure}
\begin{subfigure}[t]{.18\textwidth}
\centering
\begin{tikzpicture}[every node/.style={draw,circle,fill=black,inner sep=1.5pt}]
\node (1) at (0,-1) {};
\node (2) at (1,-1) {};
\node (3) at (2,-1) {};
\node (4) at (1,0) {}; 
\node (5) at (1,1) {}; 
\draw (1)--(2)--(3);
\draw (1)--(4)--(5);
\draw (2)--(4);
\draw (3)--(4);
\node[draw=none, fill=none, right=2pt] at (4) {$u$};
\node[draw=none, fill=none, right=2pt] at (5) {$v$};
\end{tikzpicture}
\caption{$H_9$}
\label{fig:5}
\end{subfigure}
\caption{The possibilities of $H$ in \Cref{lem:oneP3+onecliques}}
\label{fig:oneQ_1+oneC_1}
\end{figure}  

By~\Cref{table:3neg-energy}, $\mathcal{E}_3^-(H)\geq \min\{\mathcal{E}_3^-(P_5),\mathcal{E}_3^-(H_3),\mathcal{E}_3^-(H_4),\mathcal{E}_3^-(H_6),\mathcal{E}_3^-(H_9)\}\geq 6$. Note that $G-H$ is a clique with $n-5$ vertices, and thus by~\Cref{lem:complete graph+star+cycle}\ref{Kn} $\mathcal{E}_3^-(G-H)=n-6$. By~\Cref{thm: additivity}, $\mathcal{E}_3^-(G)\geq \mathcal{E}_3^-(G-H)+\mathcal{E}_3^-(H)\geq (n-6)+6=n$.
\end{proof}

\begin{corollary}\label{cor:oneP3+onecliques}
Let $G$ be a connected $n$-vertex graph. If $G$ contains a vertex $u$ such that $G-u$ is a disjoint union of exactly one $P_3$ and one clique, then $\mathcal{E}_3^{-}(G) \geq n$.
\end{corollary}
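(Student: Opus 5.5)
Set $X_1=V(P_3)$ and $X_2=V(G)\setminus(X_1\cup\{u\})$, so that $G[X_2]$ is a clique of order $n-4$. The plan is to reduce to \Cref{lem:oneP3+onecliques} whenever $u$ has a neighbour $v\in X_2$ with $N(v)\cap X_1=\emptyset$. Since $G$ is connected and $G-u$ has exactly two components, $u$ has neighbours in both $X_1$ and $X_2$; in particular, if $X_2\neq\emptyset$ then $N(u)\cap X_2\neq\emptyset$. Because $X_1$ and $X_2$ lie in different components of $G-u$, no vertex of $X_2$ has a neighbour in $X_1$. Therefore every $v\in N(u)\cap X_2$ satisfies $N(v)\cap X_1=\emptyset$. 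All hypotheses of \Cref{lem:oneP3+onecliques} then hold, and $\mathcal{E}_3^-(G)\ge n$ follows directly.

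It remains to check degenerate situations. The phrase ``exactly one $P_3$ and one clique'' requires the clique to be nonempty, so $X_2\neq\emptyset$ and $n\ge 5$; the argument above then applies, since $v$ exists by connectivity. If one allowed the clique to be empty, $G$ would have $4$ vertices, and the bound $\mathcal{E}_3^-(G)\ge 4=n$ is checked by \Cref{lem:UpToTenV}: the only connected $4$-vertex graph containing an induced $P_3$ together with an extra vertex that is not $K_4$ satisfies the bound there. I would mention this briefly for completeness.

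The only step needing care is the observation that vertices of $X_2$ have no neighbours in $X_1$. This is immediate from $P_3$ and the clique being distinct components of $G-u$, so there is no real obstacle; the corollary is essentially a restatement of \Cref{lem:oneP3+onecliques} in component language.
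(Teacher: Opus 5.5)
Your proposal is correct and is exactly the intended argument: the paper states this corollary without a separate proof, as an immediate consequence of \Cref{lem:oneP3+onecliques}. Your observations are precisely what make that lemma apply: connectivity forces $u$ to have a neighbour $v$ in the clique component (and also one in the $P_3$), and no vertex of the clique is adjacent to the $P_3$. One small wording slip appears in your degenerate-case aside: there are several connected $4$-vertex graphs $G$ with $G-u\cong P_3$ (namely $P_4$, $K_{1,3}$, $C_4$, $H_1$, and $K_4-e$), not just one, but none of them is $K_4$ or $P_3$, so \Cref{lem:UpToTenV} covers all of them anyway.
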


\begin{lemma}\label{lem:P3+cliques}
Let $G$ be a connected $n$-vertex graph. If $G$ contains a vertex $v$ such that $G-v$ is a disjoint union of some $P_3$'s and some cliques, then $\mathcal{E}_3^{-}(G) \geq n$.
\end{lemma}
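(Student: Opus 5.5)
Throughout we assume that at least one component of $G-v$ is a $P_3$. If there is none, $G-v$ is a disjoint union of cliques and \Cref{lem:UnionOfCliques} applies; that lemma already excludes $K_n$ and $P_3$, which is why the statement needs no such exception here. By \Cref{lem:UpToTenV} we may also take $n\ge 11$.

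Let $a\ge 1$ be the number of $P_3$-components of $G-v$ and $b\ge 0$ the number of clique components, and put $\ell=a+b$. The basic tool is the interlacing estimate already used in \Cref{lem:UnionOfCliques}:
$$\mathcal{E}_3^-(G)\ \ge\ \mathcal{E}_3^-(G-v)-|\lambda_{n-1}(G-v)|^3+|\lambda_n(G)|^3 .$$
By \Cref{lem:complete graph+star+cycle} and \Cref{table:3neg-energy},
$$\mathcal{E}_3^-(G-v)=2\sqrt2\,a+(n-1-3a-b)\quad\text{and}\quad \lambda_{n-1}(G-v)=-\sqrt2 .$$
Choosing one neighbour of $v$ in each component gives an induced $K_{1,\ell}$, so $|\lambda_n(G)|\ge \sqrt{\ell}$. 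Hence
$$\mathcal{E}_3^-(G)-n\ \ge\ \ell^{3/2}-(1+2\sqrt2)-(3-2\sqrt2)a-b ,$$
and it suffices to make the right-hand side nonnegative.

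\textbf{Large $\ell$.} If $\ell\ge 4$, then $a\ge1$ forces $b\le \ell-1$. The right-hand side is then at least $\ell^{3/2}-\ell-3.03$, which is $\ge 0$ for $\ell\ge 4$ (at $\ell=4$ it equals about $0.97$) and increases with $\ell$.

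\textbf{Small $\ell$.} For $\ell\le 3$ we split by $(a,b)$.
\begin{itemize}
\item $(a,b)=(1,0)$, $(2,0)$ and $(3,0)$: here $n=4,7,10$, so these are covered by \Cref{lem:UpToTenV}.
\item $(a,b)=(1,1)$: this is exactly \Cref{cor:oneP3+onecliques}.
\item $(a,b)=(2,1)$: the bound requires $3\sqrt3\ge 1+2\sqrt2+2(3-2\sqrt2)+1$. Numerically $5.196\ge 5.17$, so the star bound already suffices.
\end{itemize}

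\textbf{The remaining case $(a,b)=(1,2)$.} Here $G-v=P_3\cup C\cup C'$ with cliques $C$ and $C'$, and the star bound gives only $3\sqrt3$. We would need $|\lambda_n(G)|^3\ge 6.0$, that is, $|\lambda_n(G)|\gtrsim 1.817$. This is the main obstacle, and we handle it by structural subcases.

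\emph{Subcase 1: both cliques have at least $2$ vertices.} Since $n\ge 11$, one of them, say $C$, is large. We first attempt to find a larger induced subgraph $H$ containing $v$ with $|\lambda_{\min}(H)|\ge 1.82$. Candidates are $v$ together with a triangle formed with two neighbours in a clique, a second clique vertex adjacent or non-adjacent to $v$, and the $P_3$ attachment; these are the configurations $H_{13}$--$H_{17}$ and $H_2,H_8,H_{11}$ of \Cref{table:3neg-energy}, whose least eigenvalues are about $1.74$--$1.96$.
\begin{itemize}
\item If $v$ has two neighbours in $C$, or a vertex of $C$ is nonadjacent to $v$, we choose the appropriate $H$ from the table.
\item Where the least eigenvalue of $H$ falls just short (for example $H_{13}$ with $1.740$), we instead delete a $5$-vertex subgraph. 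We form $H=G[X_1\cup\{v,w\}]$ exactly as in \Cref{lem:oneP3+onecliques}, where $X_1$ is the $P_3$ and $w$ is a neighbour of $v$ in $C$, so $\mathcal{E}_3^-(H)\ge 6$. We then apply \Cref{thm: additivity} to the parts $H$, $C-w$ and $C'$. Because $C'$ is disjoint from $H$, its cost is only $1$. That deficit of $1$ must be recovered by adding a second vertex of $C'$ into $H$ and using the extra energy computed for $H_{12}$ or $H_{15}$-type graphs.
\end{itemize}

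\emph{Subcase 2: $C$ or $C'$ is a single vertex.} Here $n\ge 11$ forces the other clique to be large. Then $v$ together with a triangle from the large clique, the pendant vertex and the $P_3$ yields $H_{13}$- or $H_{15}$-type induced subgraphs. If instead $v$ misses a vertex of the large clique, we use $H_2$ or $H_8$ exactly as in \Cref{lem:UnionOfCliques}.

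I expect checking that each configuration in this last $(1,2)$ case hits a table graph with a large enough least eigenvalue, or a good additive split, to be the most delicate part of the argument.
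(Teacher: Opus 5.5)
You have a genuine gap: the case $(a,b)=(1,2)$ is not proved. Everything before it is correct, and it is leaner than the paper's argument. The interlacing inequality $\mathcal{E}_3^-(G)\ge\mathcal{E}_3^-(G-v)-|\lambda_{n-1}(G-v)|^3+|\lambda_n(G)|^3$, combined with the plain induced star $K_{1,\ell}$, settles $\ell\ge 4$ (the worst case $a=1$ gives exactly $\ell^{3/2}-\ell-3$). It also settles $(a,b)=(2,1)$. The paper instead uses a subdivided star throughout.

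You correctly reduce $(a,b)=(1,2)$ to showing $|\lambda_n(G)|^3\ge 6$. After that you only list candidate table graphs and say a deficit ``must be recovered'', without checking anything.

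\begin{itemize}
\item Several graphs you invoke cannot reach the threshold $6^{1/3}\approx 1.817$ on their own. Their values of $|\lambda_{\min}|^3$ are about $4.70$, $5.60$, $4.11$, $5.27$ and $5.42$ for $H_2,H_8,H_{11},H_{13},H_{15}$, respectively.
\item $H_2$ and $H_8$ worked in \Cref{lem:UnionOfCliques} only because the target there was $4$.
\item Splitting on the sizes of $C,C'$ and on which clique vertices $v$ sees is the wrong axis. Nothing in your sketch uses how $v$ meets the $P_3$, and that is what decides the case.
\end{itemize}

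The missing idea is to split on $N(v)\cap V(P_3)$. Choose $c\in C\cap N(v)$ and $c'\in C'\cap N(v)$; both exist because $G$ is connected.
\begin{itemize}
\item Suppose the $P_3$ is not contained in $N(v)$. Since it is connected and meets $N(v)$, it has an edge $xy$ with $x\in N(v)$ and $y\notin N(v)$. Then $G[\{v,x,y,c,c'\}]$ is the induced spider with legs of lengths $1,1,2$, which is the graph $H_3$. Its characteristic polynomial is $\lambda(\lambda^4-4\lambda^2+2)$, so $|\lambda_n(G)|^3\ge(2+\sqrt2)^{3/2}\approx 6.31$.
\item Suppose the $P_3$ lies inside $N(v)$. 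Its two ends together with $c,c'$ give an induced $K_{1,4}$, so $|\lambda_n(G)|^3\ge 8$.
\end{itemize}
In both cases your own inequality gives $\mathcal{E}_3^-(G)\ge n-6+|\lambda_n(G)|^3>n$. This is how the paper closes the case: its Case (ii) uses the subdivided star with $\ell_1=1$, $\ell_2=2$, where $x_1=2+\sqrt2$, and its Case (iii) uses $K_{1,4}$.

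Separately, your remark that the statement ``needs no such exception'' is backwards. With zero $P_3$ components, $G=K_n$ satisfies the hypothesis and has $\mathcal{E}_3^-(G)=n-1$. So the lemma must be read as assuming at least one $P_3$ component, which is what both you and the paper effectively do.
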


\begin{proof}
Let $G$ be an $n$-vertex graph and let $v$ be a vertex such that $G-v$ is a disjoint union of some $P_3$'s and some cliques. By~\Cref{lem:UnionOfCliques}, we may assume that $G-v$ contains at least one $P_3$.
Observing that $G$ is not isomorphic to $K_n$ or $P_3$, by \Cref{lem:UpToTenV}, $n \geq 11$. 

Let $Q_1, \ldots, Q_{\ell_1}$ denote the copies of $P_3$'s in $G-v$ such that each $|V(Q_i)\cap N(v)|\leq 2$ for $i\in \{1,\ldots, \ell_1\}$. 
Let $C_1, \ldots, C_{\ell_2}$ denote the cliques or copies of $P_3$'s (in $G-v$) whose all three vertices are adjacent to $v$. Note that $G$ contains an induced subgraph $T$, rooted at $v$, which is isomorphic to the graph formed from the star $K_{1,\ell_1+\ell_2}$ by subdividing exactly $\ell_1$ of its edges once each. See~\Cref{fig:T} for an illustration. 
\begin{figure}[ht]
\centering
\begin{tikzpicture}[every node/.style={draw,circle,fill=black,inner sep=1.5pt}, xscale=1.8]
\node (v) at (4.2,3.5) {};
\node[draw=none, fill=none, above=2pt] at (v) {$v$};
\node (u1)  at (1,2.5) {};
\node (u2)  at (2,2.5) {};
\node (u3)  at (3,2.5) {};
\node (u4)  at (4,2.5) {};
\node (u5)  at (5,2.5) {};
\node (u6)[fill=gray!10]  at (5.5,2.5) {};
\node (u7)[fill=gray!10]  at (6,2.5) {};
\node (u8) at (6.5,2.5) {};
\node (u9)  at (7,2.5) {};
\node (u10)[fill=gray!10] at (7.5,2.5) {};

\draw (v)--(u1);
\draw (v)--(u2);
\draw (v)--(u3);
\draw (v)--(u4);
\draw (v)--(u5);
\draw (v)--(u6);
\draw (v)--(u7);
\draw (v)--(u8);
\draw (v)--(u9);
\draw (v)--(u10);
\node (w11) at (1,1.5) {}; 
\node[fill=gray!10](w12) at (1,0.5) {};
\node (w21) at (1.8,1.5) {}; 
\node[fill=gray!10] (w22) at (2.2,1.5) {};
\node (w31) at (3,1.5) {}; 
\node (w32)[fill=gray!10] at (3.5,2.5) {};
\node (w41) at (4.5,2.5)[fill=gray!10] {}; 
\node (w42) at (4.25,1.5) {};
\node (w81)[fill=gray!10] at (6.25,1.5) {}; \node[fill=gray!10] (w82) at (6.75,1.5) {};
\node (w91)[fill=gray!10] at (7,1.5) {}; \node[fill=gray!10] (w102) at (7.5,1.5) {};
\draw (v)--(w32);
\draw (v)--(w41);
\draw (u1)--(w11);  
\draw (w11)--(w12);
\draw (u2)--(w21);  
\draw (u2)--(w22);
\draw (u3)--(w31);  
\draw (u3)--(w32);
\draw (w42)--(w41);  
\draw (u4)--(w42);
\draw (u5)--(u6)--(u7); 
\draw (u8)--(w81);  
\draw (u8)--(w82);
\draw (u9)--(w91); 
\draw (w81)--(w82);
\draw (w91)--(u10)--(u9);
\draw (u10)--(w102)--(u9);
\draw (w91)--(w102);
\end{tikzpicture}
\caption{An example of $T$ induced on the vertices in black}
\label{fig:T}
\end{figure}
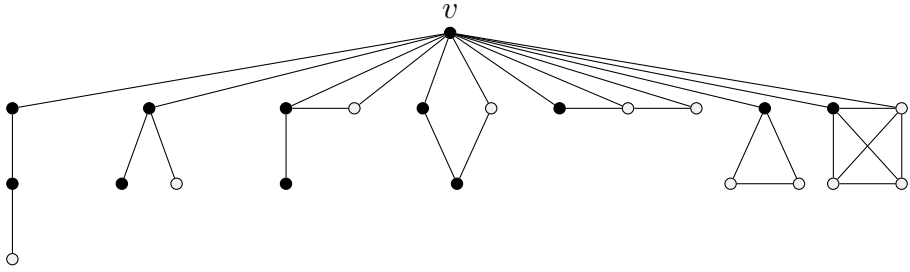
Note that $T$ has exactly $2\ell_1+\ell_2+1$ vertices. By~\Cref{lem:star-plus}, $Spec(T)=\{\pm\sqrt{x_1},\pm\sqrt{x_2},0^{\ell_2-1},1^{\ell_1-1},(-1)^{\ell_1-1}\}$, where $x_1$ and $x_2$ are the two roots of $x^2-(\ell_1+\ell_2+1)x+\ell_2=0$ with $x_1>x_2$. 
By the Interlacing Theorem (\Cref{thm:alternate}), we have that $\lambda_n\leq -\sqrt{x_1}\leq\lambda_{n-1}\leq-\sqrt{2}\leq \lambda_{n-2}\leq\dots\leq\lambda_{n-\ell_1+1}\leq-\sqrt{2}\leq\lambda_{n-\ell_1}\leq-1\leq\lambda_{n-\ell_1-1}\leq \dots\leq \lambda_{2\ell_1+\ell_2+2}\leq -1$. 
The first inequality follows from the fact that $G$ contains $T$ as an induced subgraph with $\lambda_{\ell_1+\ell_2+1}(T)=\sqrt{x_1}$, and all the other inequalities follow from the fact that $G-v$ contains a copy of $P_3$ or a clique as induced subgraphs. Then we have that  $${\small \begin{aligned}\mathcal{E}_3^{-}(G)&\geq |-\sqrt{x_1}|^3+(\ell_1-1)\times|-\sqrt{2}|^3+(n-3\ell_1-\ell_2-1)\times|-1|^3\\
&=
(\frac{\,\ell_1+\ell_2+1+\sqrt{(\ell_1+\ell_2-1)^2+4\ell_1}}{2})^\frac{3}{2}+2\sqrt{2}(\ell_1-1)+(n-3\ell_1-\ell_2-1)=:f(\ell_1, \ell_2).
\end{aligned}}$$
Observe that the term involving $\ell_1$ in the function $f(\ell_1, \ell_2)$ is at least $(\sqrt{\ell_1}+2\sqrt{2}-3)\ell_1$ and thus $f(\ell_1, \ell_2)$ is increasing as $\ell_1$ increases when $\ell_1\geq 1$.

\medskip
\noindent
{\bf Case (i).} \emph{Assume that $\ell_1\geq 2$}.
\medskip

If $\ell_1\geq 2$, then $f(\ell_1, \ell_2)\geq f(2, \ell_2)$ where $$
f(2, \ell_2)=(\frac{\,3+\ell_2+\sqrt{(\ell_2-1)^2+8}}{2})^\frac{3}{2}+2\sqrt{2}+(n-\ell_2-7).$$ Note that the term involving $\ell_2$ in $f(2, \ell_2)$ is at least $(\sqrt{\ell_2}-1)\ell_2$ and thus $f(2, \ell_2)$ is increasing as $\ell_2$ increases when $\ell_2\geq 2$. Thus if $\ell_1\geq 1$ and $\ell_2\geq 2$, then $f(\ell_1, \ell_2)\geq f(2,2)=8+2\sqrt{2}-9+n\approx 1.8+n\geq n$. For small cases, by computation, $f(2,0)=3\sqrt{3}+2\sqrt{2}-7+n\approx 1.02+n$, and $f(2,1)=(2+\sqrt{2})^{\frac{3}{2}}+2\sqrt{2}-8+n\approx 1.13+n$. Therefore, in this case, $\mathcal{E}_3^-(G)\geq f(2, \ell_2)\geq n$.

\medskip
\noindent
{\bf Case (ii).} \emph{Assume that $\ell_1=1$}.
\medskip

If $\ell_1=1$ and $\ell_2=1$, then as $n\geq 11$, $C_1$ must be a clique. Thus $\mathcal{E}_3^-(G)\geq n$ by~\Cref{cor:oneP3+onecliques}. Next we consider when $\ell_2\geq 2$. We have that $$f(1, \ell_2)=(\frac{2+\ell_2+\sqrt{{\ell_2}^2+4}}{2})^{\frac{3}{2}}+(n-4-\ell_2).$$ Note that the term involving $\ell_2$ in $f(1, \ell_2)$ is at least $(\sqrt{\ell_2+1}-1)\ell_2$ and thus $f(1, \ell_2)$ is increasing as $\ell_2$ increases (as $\ell_2\geq 2$). Therefore, in this case, $\mathcal{E}_3^-(G)\geq f(1, \ell_2)\geq f(1, 2)=(2+\sqrt{2})^{\frac{3}{2}}-6+n\approx0.30+n > n$.

\medskip
\noindent
{\bf Case (iii).} \emph{Assume that $\ell_1=0$}.
\medskip

If $\ell_1=0$, since $n\geq 11$, then $\ell_2\geq 1$. Moreover, we know that there exists at least one copy of $P_3$ in $G-v$ such that all its vertices are adjacent to $v$. Let $p$ denote the number of such $P_3$'s. Thus the number of cliques in $G-v$ is $\ell_2-p$.

Note that $G$ contains an induced subgraph $H'$ isomorphic to a star $K_{1,p+\ell_2}$, as it consists of $v$ together with one vertex from each of cliques in $G-v$, and two endpoints of each of $P_3$'s. By~\Cref{lem:complete graph+star+cycle}\ref{K1n},
$\mathcal{E}_{3}^{-}(H')=\sqrt{p+\ell_2}$. Moreover, $G-H'$ is a disjoint union of cliques, with the number of such cliques being at most $\ell_2$. Noting that $G-H'$ has $n-1-p-\ell_2$ vertices, by~\Cref{lem:complete graph+star+cycle}\ref{Kn}, $\mathcal{E}_{3}^{-}(G-H')\geq (n-1-p-\ell_2)-\ell_2$. It follows from~\Cref{thm: additivity} that $$\mathcal{E}_{3}^{-}(G) \geq \mathcal{E}_{3}^{-}(H')+\mathcal{E}_{3}^{-}(G-H') \geq (\sqrt{p+\ell_2})^3 + (n-1-p-2\ell_2).$$ If $p+\ell_2\geq 4$, since $p\geq 1$, then $\mathcal{E}_{3}^{-}(G)\geq (\sqrt{p+\ell_2})^3 + (n-1-p-\ell_2)-t\geq 2(p+\ell_2) + (n-1-p-2\ell_2)\geq n$. 

It remains to consider when $p+\ell_2\leq 3$. Since $n\geq 11$, $\ell_2\geq 2$; together with the fact $p\geq 1$, we only need to consider the case when $p=1$ and $\ell_2=2$. In this case, $G-v$ is the disjoint union of exactly one $P_3$ and one complete graph; by~\Cref{cor:oneP3+onecliques}, $\mathcal{E}_3^-(G)\geq n$.
\end{proof}

A vertex of a graph is said to be \emph{dominating} if it is adjacent to all the other vertices of this graph.

\begin{lemma}\label{lem:dominating}
Let $G$ be a connected $n$-vertex graph, not isomorphic to $K_n$ or $P_3$. If $G$ contains a dominating vertex $v$, then $\mathcal{E}_3^{-}(G) \geq n$.
\end{lemma}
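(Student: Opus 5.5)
Let $v$ be a dominating vertex of $G$. We may assume $n\ge 11$ by \Cref{lem:UpToTenV}, since for $n\le 10$ the statement is already verified. The argument is an induction on $n$ combined with interlacing: we study $H:=G-v$ and bound the negative eigenvalues of $G$ using those of $H$.

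\textbf{Step 1: $H$ is a disjoint union of cliques.} Then \Cref{lem:UnionOfCliques} applies directly.

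\textbf{Step 2: $H$ has a component that is not a clique.} Such a component contains an induced $P_3$, say $a-b-c$ with $ac\notin E$. Because $v$ dominates, $\{v,a,b,c\}$ induces $K_4-e$. This is the local structure we will exploit.

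\textbf{Step 3: reduce to the case where $H$ is connected.} Suppose $H$ has components $H_1,\dots,H_\ell$ with $\ell\ge 2$. Use \Cref{thm: additivity} on the partition $\{v\}\cup V(H_1)$ versus $V(H_2)\cup\dots\cup V(H_\ell)$. Each piece with a dominating vertex is handled by induction: it gives $\ge$ its order unless the piece is a complete graph or a $P_3$. Each component $H_i$ on its own gives at least $|H_i|-1$, with equality only for cliques. A careful grouping makes the deficits cancel:
\begin{itemize}
\item put a non-clique component together with $v$;
\item absorb the clique deficits using the star argument from \Cref{lem:UnionOfCliques}, i.e.\ interlacing with an induced $K_{1,\ell}$ through $v$, whose smallest eigenvalue is $-\sqrt{\ell}$.
\end{itemize}
This leaves the main case, where $H$ is connected and is neither complete nor, by Step 2, a disjoint union of cliques.

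\textbf{Step 4: $H$ connected, non-complete, on $n-1\ge 10$ vertices.} By induction, $\mathcal{E}_3^-(H)\ge n-1$ (note $H\neq P_3$). Interlacing (\Cref{thm:alternate}) between $G$ and $H=G-v$ gives
$$\mathcal{E}_3^-(G)\ \ge\ \mathcal{E}_3^-(H)-|\lambda_{n-1}(H)|^3+|\lambda_n(G)|^3 .$$
The trouble is that the loss term $|\lambda_{n-1}(H)|^3$ can be large, so this inequality alone may not suffice. I would therefore instead aim to add just one unit of energy, as follows.
\begin{enumerate}
\item Pick a vertex $x$ of $H$ whose removal keeps $H$ connected, for example a leaf of a spanning tree. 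Choose it so that $H-x$ is neither complete nor $P_3$; when every such choice yields a complete graph, $H$ is $K_{n-1}$ minus edges at one vertex, and $G$ becomes a $K_n-e$ type graph handled by \Cref{lem:K_n-e} or \Cref{lem:K_(n-2)+K_2}.
\item Then $G-x$ has the dominating vertex $v$ and is not complete, so induction gives $\mathcal{E}_3^-(G-x)\ge n-1$.
\item Apply \Cref{thm: additivity} to the partition $\{x\}$ versus $V(G)\setminus\{x\}$. This is useless as it stands, since a single vertex contributes $0$.
\item So instead partition off a small induced piece $S$ (of order $4$ to $6$) containing the $K_4-e$ from Step 2, with $G-S$ still having $v$ dominating and not complete. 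Then
$$\mathcal{E}_3^-(G)\ \ge\ \mathcal{E}_3^-(G[S])+\mathcal{E}_3^-(G-S),$$
where $\mathcal{E}_3^-(G[S])\ge |S|$ by \Cref{table:3neg-energy} (for instance $H_8,H_9,H_{10}$ all exceed $5$ on $5$ vertices) and $\mathcal{E}_3^-(G-S)\ge n-|S|$ by induction.
\end{enumerate}
Since $v$ can belong to only one side, we take $S$ disjoint from $v$ when $H$ contains a non-clique induced piece on $\ge 4$ vertices, such as $P_4$, $C_4$ or $K_{1,3}$, whose value in \Cref{table:3neg-energy} is $\ge |S|+$ slack.

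\textbf{Main obstacle.} The hardest part is the case analysis guaranteeing that the remainder $G-S$ is not complete (the case of $P_3$ is excluded by size). If $G-S$ is complete, then $G$ is a clique with a few modified vertices. In that case I would compute directly via quotient matrices, as in \Cref{lem:K_n-e}, or fall back on the interlacing inequality above with $|\lambda_{n-1}(H)|\le 1$ coming from the near-clique structure.
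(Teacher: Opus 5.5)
Once the abandoned attempts are set aside, your core step is the paper's. You peel off a connected, non-complete $4$-vertex induced subgraph $S$ lying in a non-clique component of $G-v$, note that $v$ still dominates $G-S$, and combine \Cref{thm: additivity} with induction.

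\textbf{The missing case.} The genuine gap is when $G-v$ is not a union of cliques but every non-clique component is an induced $P_3$, e.g.\ $G=K_1\vee(2P_3\cup K_{n-7})$. Then there is no $4$-vertex non-clique piece avoiding $v$, so Step 4 does not apply, and Step 3 does not cover this case either:
\begin{itemize}
\item Step 3, like the first line of Step 4, relies on $\mathcal{E}_3^-(H_i)\ge|H_i|-1$ or $\ge|H_i|$ for components of $G-v$. These components have no dominating vertex, so your induction hypothesis says nothing about them. That general bound is the main theorem this lemma is used to prove.
\item Putting a non-clique component together with $v$ only gives $|H_1|+1$, with no surplus. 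So the deficit of $1$ per clique component is never paid, and $v$ cannot also serve as the centre of a star.
\end{itemize}
What is needed is the star used as an \emph{additivity block}, not via interlacing. The paper gets this from Case (iii) of \Cref{lem:P3+cliques}, since a dominating $v$ forces $\ell_1=0$. Suppose $G-v$ has $p\ge1$ components isomorphic to $P_3$ and $q$ clique components. Let $T$ consist of $v$, both ends of each $P_3$, and one vertex of each clique. Then $T\cong K_{1,m}$ with $m=2p+q$ and $\mathcal{E}_3^-(T)=m^{3/2}$, while $G-T$ is a union of at most $p+q$ cliques. Hence $\mathcal{E}_3^-(G)\ge m^{3/2}+(n-1-m)-(p+q)\ge n$ whenever $m\ge4$. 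The leftover case $G-v\cong P_3\cup K_{n-4}$ has $m=3$, fails this count, and needs \Cref{cor:oneP3+onecliques}: the block $H_9$, with $\mathcal{E}_3^-(H_9)\approx7.41$, plus a clique.

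\textbf{The complete-remainder case.} Your ``main obstacle'' is legitimate. The paper's write-up applies the induction hypothesis to $G-H$ without excluding that $G-H$ is complete, and this does occur, e.g.\ for $G=K_1\vee(P_4\cup K_{n-5})$. However, your proposed fallback is wrong. Since $S$ is an induced subgraph of $G-v$, interlacing gives $\lambda_{n-1}(G-v)\le\lambda_{\min}(S)\le-1.48$ for every admissible $S$, so $|\lambda_{n-1}(G-v)|\le1$ never holds.

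The easy repair is to move $v$ into the block. If $G-S$ is complete, then $G-(S\cup\{v\})\cong K_{n-5}$ and $G[S\cup\{v\}]=K_1\vee S$.
\begin{itemize}
\item For $S\in\{K_{1,3},C_4\}$ you already have $\mathcal{E}_3^-(S)\ge5$, which suffices.
\item For the other three candidates, short quotient-matrix computations give $\mathcal{E}_3^-(K_1\vee P_4)\approx7.53$, $\mathcal{E}_3^-(K_1\vee H_1)\approx6.75$, and $\mathcal{E}_3^-(K_5-e)\approx6.46$ (cf.\ \Cref{lem:K_n-e}).
\end{itemize}
All of these are at least $6$, so $\mathcal{E}_3^-(G)\ge6+(n-6)=n$.
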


\begin{proof}
Since $G$ is not isomorphic to $K_n$ or $P_3$, by \Cref{lem:UpToTenV} we may assume that $n \geq 11$. Let $v$ be a dominating vertex of $G$. We proceed by induction on $n$.

If $G-v$ is a disjoint union of cliques, then we are done by~\Cref{lem:UnionOfCliques}. If $G-v$ has a connected component with at least $4$ vertices that is not a clique, then it must contain an induced subgraph $H$ not isomorphic to $K_4$ of order $4$. By~\Cref{lem:UpToTenV}, $\mathcal{E}_{3}^{-}(H)\geq 4$. Note that $v$ is also a dominating vertex of $G-H$. By the induction hypothesis, $\mathcal{E}_{3}^{-}(G-H)\geq n-4$. Therefore, by \Cref{thm: additivity}, $\mathcal{E}_{3}^{-}(G) \geq \mathcal{E}_{3}^{-}(H)+\mathcal{E}_{3}^{-}(G-H) > 4 + (n-4) = n.$ Thus each of non-complete connected components of $G-v$ is isomorphic to $P_3$. It then follows from~\Cref{lem:P3+cliques} that $\mathcal{E}_3^{-}(G) \geq n$.
\end{proof}

\begin{lemma}\label{lem:ThreeCliques}
Let $G$ be a connected $n$-vertex graph, not isomorphic to $K_n$ or $P_3$. If $G$ consists of three vertex disjoint cliques $G_1, G_2,$ and $G_3$ with $|G_i|\geq 2$ for $i\in [3]$, and that for each $i\in [3]$ there exists exactly one vertex $x_i \in V(G_i)$ such that $x_1x_2x_3$ forms a triangle, then $\mathcal{E}_3^{-}(G) \geq n$.
\end{lemma}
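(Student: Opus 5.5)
The plan is to use the super-additivity of \Cref{thm: additivity}. Pick a small induced subgraph $H$ built around the triangle $x_1x_2x_3$ whose negative $3$-energy is known. Then $G-H$ is a disjoint union of (possibly empty) cliques, and $\mathcal{E}_3^-(G)\ge \mathcal{E}_3^-(H)+\mathcal{E}_3^-(G-H)$. Write $a=|G_1|\ge b=|G_2|\ge c=|G_3|\ge 2$, so $n=a+b+c$. By \Cref{lem:UpToTenV} we may assume $n\ge 11$. We use repeatedly that, by \Cref{lem:complete graph+star+cycle}\ref{Kn}, a clique on $k$ vertices contributes $k-1$ to $\mathcal{E}_3^-$ if $k\ge 1$ and $0$ if $k=0$. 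Hence a disjoint union of cliques on $m$ vertices in total, with $q$ nonempty components, has negative $3$-energy exactly $m-q$.

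\emph{First choice of $H$.} Choose $y_i\in V(G_i)\setminus\{x_i\}$ for $i\in[3]$ and let $H=G[\{x_1,x_2,x_3,y_1,y_2,y_3\}]$. This is the triangle with one pendant vertex at each corner (the net). Its spectrum follows from the $\mathbb{Z}_3$-symmetry. Symmetric vectors give $\lambda^2-2\lambda-1=0$, i.e.\ $\lambda=1\pm\sqrt2$. Vectors summing to zero on the triangle give $\lambda^2+\lambda-1=0$, each root with multiplicity $2$. So the negative eigenvalues are $1-\sqrt2$ and $-\frac{1+\sqrt5}{2}$ (twice), and
\[
\mathcal{E}_3^-(H)=(\sqrt2-1)^3+2\Bigl(\tfrac{1+\sqrt5}{2}\Bigr)^3\approx 8.54 .
\]
Now $G-H$ is the disjoint union of cliques of sizes $a-2,b-2,c-2$. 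If at most two of these are nonempty (that is, $c=2$), then $\mathcal{E}_3^-(G-H)\ge (n-6)-2$. This gives $\mathcal{E}_3^-(G)\ge n-8+8.54>n$.

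\emph{Second choice of $H$.} It remains to treat $c\ge 3$, so all three remaining cliques would be nonempty and the bound above only yields about $n-0.46$. Here we enlarge $H$ to a copy of $H_{12}$ from \Cref{table:3neg-energy}. Take two further vertices in each of $G_1$ and $G_2$ forming triangles with $x_1$ and $x_2$ respectively, and one vertex of $G_3$ pendant at $x_3$. This is an induced subgraph because there are no edges between distinct cliques other than the triangle edges. Then $\mathcal{E}_3^-(H_{12})\approx 11.742$. The graph $G-H_{12}$ consists of cliques of sizes $a-3,b-3,c-2$, on $n-8$ vertices in total, with at most three components. Hence $\mathcal{E}_3^-(G)\ge 11.742+(n-11)>n$.

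The only real work is the spectral computation for the net, together with checking that each chosen vertex set induces exactly the claimed graph. The latter is where the hypothesis that $x_1,x_2,x_3$ are the unique vertices joining the cliques is used. Everything else is bookkeeping of the number of nonempty leftover cliques. That bookkeeping is the step to watch: the argument must switch from the net to $H_{12}$ exactly when all three leftover cliques are nonempty, since otherwise the bound falls just short of $n$.
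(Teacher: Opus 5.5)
Your proof is correct and is essentially the paper's argument: use super-additivity with an induced gadget around the triangle whose complement is a union of at most three cliques, and use $H_{12}$ when all three leftover cliques are nonempty. The only difference is the case split. For $c=2$ you use the net, and your value $\mathcal{E}_3^-=5\sqrt2+2\sqrt5-3\approx 8.54$ is right. The paper instead splits on whether one or at least two cliques have size at least $3$, using $H_4$ (which leaves a single clique) in the former case. As a side effect, your version also covers $a=b=c=2$ directly and so does not actually need the reduction to $n\ge 11$.
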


\begin{proof}
Let $G_1, G_2,$ and $ G_3 $ denote three disjoint cliques with $|G_i|\geq 2$ for each $i\in [3]$, and let $x_1x_2x_3$ denote the triangle. Observing that $G$ is not isomorphic to $K_n$ or $P_3$, by \Cref{lem:UpToTenV} we may assume that $n \geq 11$. Thus we have that $|G_i|\geq 3$ for some $i\in \{1,2,3\}$.

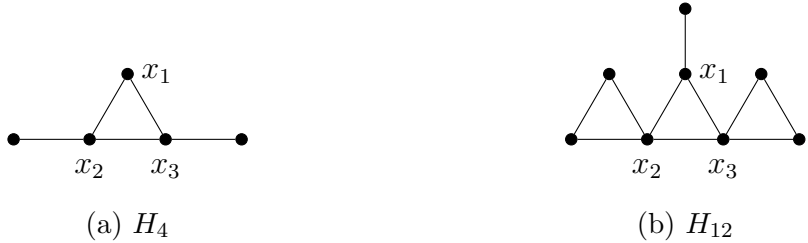
\begin{figure}[htbp]
\begin{subfigure}[t]{.45\textwidth}
\centering  
\begin{tikzpicture}[every node/.style={draw,circle,fill=black,inner sep=1.5pt}]
\node (1) at (0,0) {};
\node (2) at (1,0) {};
\node (3) at (0.5,0.866) {};
\draw (1)--(2)--(3)--(1);

\node (4) at (-1,0) {};
\node (5) at (2,0) {};
\draw (1)--(4);
\draw (2)--(5);
\node[draw=none, fill=none,right=2pt] at (3) {$x_1$};
\node[draw=none, fill=none, below=2pt] at (1) {$x_2$};
\node[draw=none, fill=none, below=2pt] at (2) {$x_3$};
\end{tikzpicture}
\caption{$H_4$}
\label{fig:H3} 
\end{subfigure}
\begin{subfigure}[t]{.45\textwidth}
\centering
\begin{tikzpicture}[every node/.style={draw,circle,fill=black,inner sep=1.5pt}]
\node (1) at (0,0) {};
\node (2) at (1,0) {};
\node (3) at (0.5,0.866) {};
\draw (1)--(2)--(3)--(1);

\node (4) at (-0.5,0.866) {};
\node (5) at (-1,0) {};
\draw (4)--(5);
\draw (4)--(1);
\draw (5)--(1);

\node (6) at (1.5,0.866) {};
\node (7) at (2,0) {};
\draw (6)--(7);
\draw (6)--(2);
\draw (7)--(2);

\node (8) at (0.5,1.732) {};
\draw (8)--(3);
\node[draw=none, fill=none, right=2pt] at (3) {$x_1$};
\node[draw=none, fill=none, below=2pt] at (1) {$x_2$};
\node[draw=none, fill=none, below=2pt] at (2) {$x_3$};
\end{tikzpicture}
\caption{$H_{12}$}
\label{fig:H12}
\end{subfigure}
\caption{Configurations $H_4$ and $H_{12}$ in~\Cref{lem:ThreeCliques}}
\label{fig:K3-cliques} 
\end{figure}

If there is exactly one $G_i$ has at least three vertices, say $G_1$, then $|G_2|=|G_3|=2$ and thus $G$ is the union of $H$ isomorphic to $H_4$ (as shown in~\Cref{fig:H3}) and a clique $K_\ell$ with $\ell=n-|H_{4}|$. By~\Cref{table:3neg-energy}, we have that $\mathcal{E}_3^{-}(H_{4})\approx 6.447\geq 6$, and it follows from~\Cref{thm: additivity} that $$\mathcal{E}_3^{-}(G)\geq \mathcal{E}_3^{-}(H)+\mathcal{E}_3^{-}(G-H)=\mathcal{E}_3^{-}(H_4)+\mathcal{E}_3^{-}(K_{\ell})\geq 6+(n-5-1)=n.$$ 

If there are at least two $G_i$'s with at least three vertices, say $G_2$ and $G_3$, then $G$ contains an induced subgraph $H'$ isomorphic to $H_{12}$ (as shown in~\Cref{fig:H12}), and, moreover, $G-H'$ is a disjoint union of at most three cliques. By~\Cref{table:3neg-energy}, we have that $\mathcal{E}_3^{-}(H_{12})\approx 11.742\geq 11$. Therefore, by~\Cref{thm: additivity}, $$\mathcal{E}_3^{-}(G)\geq \mathcal{E}_3^{-}(H')+\mathcal{E}_3^{-}(G-H')\geq \mathcal{E}_3^{-}(H_{12})+(n-|H_{12}|-3)\geq 11+(n-8-3)=n.$$
It completes the proof of the lemma.
\end{proof}

\subsection[negative-main]{Proof of \Cref{thm:3-energy-main}}\label{sec:mainproof}

\noindent
{\bf A proof sketch}. We proceed by contradiction. Assume that $G$ is a connected $n$-vertex graph, not isomorphic to $K_n$ or $P_3$, such that $\mathcal{E}_3^{-}(G) < n$, and that $n$ is minimum with this property. By a computer-assisted checking, we may assume $n \ge 11$. We choose a vertex $v$ of maximum degree in $G$; it is straightforward to show that $\deg(v) \ge 3$. We can see that $G-N[v]$ is non‑empty and that every connected component of $G-N[v]$ is a clique. Moreover, for each connected component $Q$ of $G-N[v]$, the subgraph induced by $V(Q) \cup N(Q)$ is either a clique or a $P_3$. Our strategy is the following: We attempt to find a ``good" induced connected subgraph $H$ such that: either its negative $3$-energy is large enough to offset the loss in $\mathcal{E}_3^-(G-H)$ (compared to $|G-H|$) caused by disconnection or clique components; or if $G-H$ (which typically contains $v$) is isomorphic to $P_3$, then the degree of $v$ must be bounded, which in turn implies that $G$ has fewer than $10$ vertices. This structural description forces $G-v$ to be a disjoint union of some $P_3$'s and some cliques. By a previously-established lemma (\Cref{lem:P3+cliques}), we have that $\mathcal{E}_3^{-}(G) \ge n$, contradicting the assumption $\mathcal{E}_3^{-}(G) < n$. Hence, no such counterexample exists, completing the proof.

\medskip
\noindent
{\bf \Cref*{thm:3-energy-main}}. \emph{Let $n$ be a positive integer. If $G$ is a connected $n$-vertex graph other than $K_n$ or $P_3$, then $\mathcal{E}_3^{-}(G) \geq n.$}

\begin{proof}
Assume to the contrary that $G$ is a connected $n$-vertex graph, not isomorphic to $K_n$ or $P_3$, but satisfies $\mathcal{E}_3^{-}(G) < n.$ Among all such graphs, we choose $G$ such that $n$ is the smallest integer for which the property holds. By~\Cref{lem:UpToTenV}, we know that $n\geq 11$.

\begin{claim}\label{claim:connected-components}
Let $G'$ be a proper subgraph of $G$ and let $c(G')$ denote the number of connected components of $G'$. Then $\mathcal{E}_3^{-}(G') \geq |G'|-c(G')$.
\end{claim}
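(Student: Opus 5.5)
Since $\mathcal{E}_3^-$ of a graph is computed from its adjacency spectrum, I read ``proper subgraph'' here as a proper induced subgraph $G'=G[S]$ with $S\subsetneq V(G)$. The plan is to reduce to the connected components and use the minimality of $n$. The spectrum of a disjoint union is the multiset union of the spectra of its components, so $\mathcal{E}_3^-$ is additive over components. Hence it suffices to show that every connected component $C$ of $G'$ satisfies $\mathcal{E}_3^-(C)\geq |C|-1$. Summing this over the $c(G')$ components then gives $\mathcal{E}_3^-(G')\geq |G'|-c(G')$.

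For a component $C$ we have $|C|\leq |G'|<n$, so $C$ is a connected graph with fewer vertices than the minimal counterexample $G$. There are three cases.
\begin{enumerate}
\item If $C\cong K_m$, then \Cref{lem:complete graph+star+cycle}\ref{Kn} gives $\mathcal{E}_3^-(C)=m-1=|C|-1$.
\item If $C\cong P_3$, the table gives $\mathcal{E}_3^-(C)=2\sqrt{2}>2=|C|-1$.
\item Otherwise $C$ is not isomorphic to $K_{|C|}$ or $P_3$. By the minimality of $n$, $C$ cannot violate the statement of \Cref{thm:3-energy-main}, so $\mathcal{E}_3^-(C)\geq |C|>|C|-1$. (The case $|C|\leq 10$ is also covered directly by \Cref{lem:UpToTenV}.)
\end{enumerate}
Adding the three cases over all components finishes the argument.

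There is no real obstacle. The only point needing care is the interpretation: the claim is applied to induced subgraphs $G-H$, and minimality is applied to the components of such a subgraph, which are themselves connected graphs of order less than $n$. The bound is also not sharp in general: each non-clique component gains at least $1$, and each $P_3$ component gains $2\sqrt{2}-2$. I would note this surplus, since the later case analysis will likely rely on it.
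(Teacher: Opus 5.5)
Your proof is correct and follows the paper's argument: split $G'$ into its connected components, handle clique and $P_3$ components directly, and apply the minimality of $G$ to the remaining components. Your explicit reading of ``proper subgraph'' as an induced subgraph on a proper vertex subset is the right one, since minimality requires every component to satisfy $|C|<n$. This is also how the claim is used later, namely for $G'=G-H$.
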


\begin{proof*}
Let $G'_1,G'_2,\ldots, G'_{c(G')}$ be the connected components of $G'$. Note that each $G'_i$ is also a proper subgraph of $G$. For any $G'_i$ isomorphic to $P_3$ or being a clique, by~\Cref{table:3neg-energy} and \Cref{lem:complete graph+star+cycle}\ref{Kn}, $\mathcal{E}_3^{-}(G'_i)\geq |G'_i|-1$; If $G'_j$ is neither isomorphic to $P_3$ nor being a clique, then by the minimality of $G$, $\mathcal{E}_3^{-}(G'_j)\geq |G'_j|$. Hence, $\mathcal{E}_3^{-}(G')=\mathcal{E}_3^{-}(G'_1)+\cdots +\mathcal{E}_3^{-}(G'_{c(G')})\geq |G'_1|+\cdots +|G'_{c(G')}|-c(G')=|G'|-c(G').$
\end{proof*}  

\medskip
By the minimality of $G$, the next claim is a direct consequence of~\Cref{thm: additivity}. 

\begin{claim}\label{claim:H+G-H}
Let $H$ be a connected induced subgraph of $G$. If $G-H$ is connected, then at least one of $H$ and $G-H$ is either a clique or isomorphic to $P_3$. 
\end{claim}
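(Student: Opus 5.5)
We argue by contradiction, using the minimality of $G$ together with \Cref{thm: additivity}. Let $H$ be a connected induced subgraph of $G$ such that $G-H$ is also connected. Here $H$ is nonempty and a proper subgraph, and we may take $G-H$ nonempty, since otherwise the statement is vacuous or trivially handled. Suppose that neither $H$ nor $G-H$ is a clique or isomorphic to $P_3$.

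Both $H$ and $G-H$ are connected graphs with strictly fewer than $n$ vertices. Neither is a complete graph and neither is isomorphic to $P_3$. By the minimality of $n$, each of them therefore satisfies the inequality of \Cref{thm:3-energy-main}, which gives
$$\mathcal{E}_3^-(H)\geq |H| \quad\text{and}\quad \mathcal{E}_3^-(G-H)\geq |G-H|.$$
Note also that a graph on one vertex is $K_1$, a clique, so both parts automatically have at least two vertices under our assumption.

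Next, order the vertices of $G$ so that $V(H)$ comes first and $V(G-H)$ second. The adjacency matrix $A_G$ is then a $2\times 2$ block Hermitian matrix whose diagonal blocks are $A_H$ and $A_{G-H}$, because both are induced subgraphs. Applying \Cref{thm: additivity} with $r=3$ gives
$$\mathcal{E}_3^-(G)\geq \mathcal{E}_3^-(H)+\mathcal{E}_3^-(G-H)\geq |H|+|G-H|=n.$$
This contradicts the assumption $\mathcal{E}_3^-(G)<n$. Hence at least one of $H$ and $G-H$ is a clique or a copy of $P_3$.

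No step here is a genuine obstacle. The only care needed is to confirm that the minimality hypothesis applies to both pieces: each must be connected and have fewer vertices than $G$, and each must be excluded from the list $K_m$, $P_3$. Inducedness is what makes the diagonal blocks equal the adjacency matrices of $H$ and $G-H$.
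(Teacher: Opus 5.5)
Your proposal is correct and is essentially the paper's argument: the paper states that this claim is a direct consequence of the minimality of $G$ together with \Cref{thm: additivity}. That is, apply minimality to both connected pieces, each with fewer than $n$ vertices and neither a clique nor $P_3$, then combine the block-diagonal super-additivity to get $\mathcal{E}_3^-(G)\geq n$, a contradiction.
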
 

Let $\Delta$ denote the maximum degree of the minimum counterexample $G$. If $\Delta=2$, then $G$ is isomorphic to either $P_n$ or $C_n$. Since $n \geq 11$, for either $P_n$ or $C_n$, $G$ contains an induced subgraph $P_4$ such that $G-P_4$ is isomorphic to $P_{n-4}$, contradicting~\Cref{claim:H+G-H}.

Therefore, we assume that $\Delta \geq 3$. Let $v$ be a vertex of $V(G)$ such that $d_G(v) = \Delta$. Let $N(v)$ denote the set of neighbors of $v$ in $G$ and let $N[v]=N(v)\cup \{v\}$. We know that $G-v$ is not a disjoint union of cliques, as otherwise by \Cref{lem:UnionOfCliques} we have that $\mathcal{E}_3^{-}(G) \geq n$, a contradiction. Furthermore, we have that $G-N[v]$ is not empty (i.e., $v$ is not a dominating vertex), as otherwise by~\Cref{lem:dominating}, $\mathcal{E}_3^{-}(G) \geq n$, again a contradiction.

\begin{claim}\label{claim:G-N[v]isUnionOfCliques}
$G-N[v]$ is a disjoint union of cliques. 
\end{claim}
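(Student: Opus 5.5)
We argue by contradiction and rely on \Cref{claim:H+G-H} together with \Cref{thm: additivity}. Suppose some connected component $Q$ of $G-N[v]$ is not a clique. Since $Q$ is connected and not complete, it contains an induced $P_3$, say $x_1x_2x_3$. We then want an induced connected subgraph $H\subseteq Q$ such that $G-H$ is connected, and neither $H$ nor $G-H$ is a clique or isomorphic to $P_3$. Such an $H$ contradicts \Cref{claim:H+G-H}. The other side $G-H$ always contains $v$ together with its $\Delta\ge 3$ neighbours, so it has a vertex of degree at least $3$. Hence $G-H$ is not $P_3$, and since $G-H$ contains $N[v]$ and also $G-v$ is not a disjoint union of cliques, we should be able to rule out that $G-H$ is a clique. (More simply: if $G-H$ were a clique, then $v$ would be adjacent to all of it, and then $N(v)$ would cover $G-H$; this has to be checked against the structure.)

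The main step is to choose $H$ so that $G-H$ stays connected and $H$ is neither complete nor $P_3$. The plan is to take a spanning tree $T_Q$ of $Q$ and remove vertices from $Q$ greedily. Every vertex of $Q$ lies in the same component of $G-H$ as $v$ as long as the remaining part of $Q$ keeps an attachment to $N(v)$. Concretely, fix a vertex $r\in Q$ with a neighbour in $N(v)$, root $T_Q$ at $r$, and let $H$ consist of vertices far from $r$. Taking $H$ as the union of some subtrees hanging off the root keeps $G-H$ connected, because each remaining vertex of $Q$ reaches $r$ inside $T_Q-H$, and $r$ reaches $v$. Among such choices we look for one where $G[H]$ is connected, not complete, and not $P_3$, for instance an induced $4$-vertex non-$K_4$ subgraph, or $H=Q$ itself when $Q$ is neither a clique nor $P_3$.

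If $Q$ is neither a clique nor $P_3$, the simplest choice is $H=Q$. Then $G-H$ is connected: every vertex outside $N[v]$ lies in a component of $G-N[v]$ that is attached to $N(v)$ by connectivity of $G$, and $N[v]$ is connected through $v$. Since $Q$ is neither a clique nor $P_3$, \Cref{claim:H+G-H} forces $G-Q$ to be a clique or $P_3$. The $P_3$ option is excluded because $d(v)\ge3$. If $G-Q$ is a clique, then $V(G)\setminus Q=N[v]$, so $G-N[v]=Q$, and $N[v]$ is a clique. We then plan to get a contradiction in this remaining configuration by a direct additivity count. One option is to pick an induced $H'$ consisting of $v$ plus a suitable piece of $Q$ and a neighbour, so that both parts are non-exceptional. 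Another is to use the maximality of $\Delta$: any vertex $u\in N(v)$ with a neighbour in $Q$ has degree at least $\Delta+1>\Delta$, since $N[v]$ is a clique. That is impossible, so $Q$ is not attached to $N(v)$, contradicting connectivity.

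The remaining case is $Q\cong P_3$, which is the expected main obstacle. Here $H=Q$ alone gives nothing, since $P_3$ is exceptional. We would instead enlarge $H$ to $Q\cup\{u\}$, where $u\in N(v)$ is adjacent to $Q$. This yields a connected $4$-vertex non-clique $H$ (it contains an induced $P_3$, so it is not $K_4$, and it is not $P_3$). We need $G-H$ to remain connected. If it is not, we use \Cref{claim:connected-components} together with \Cref{lem:UpToTenV}, which gives $\mathcal E_3^-(H)\ge4$, and bound the number of components created, adjusting $u$ or using the degree maximality of $v$ to handle the bad case.
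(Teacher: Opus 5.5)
Your first case ($Q$ neither a clique nor $P_3$, with $H=Q$) is correct and is the paper's argument. The gap is in the case $Q\cong P_3$, and it is a missing idea.

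Take $H=G[V(Q)\cup\{u\}]$ with $G-H$ connected. Then \Cref{claim:H+G-H} does not give a contradiction. Since $H$ is a $4$-vertex non-clique and $|G-H|=n-4\ge 7$, it only tells you that $G-H$ is a clique. Your device for excluding the clique outcome fails here because $u\in N(v)$ now lies in $H$. A clique $G-H$ just means $G-H=G[N[v]\setminus\{u\}]$, whose vertices are forced to have degree only at least $\Delta-1$.

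This situation really occurs and is consistent with everything established so far. For example, take a clique $K$ of order $\Delta$ through $v$, a vertex $u$ adjacent only to $v$ in $K$, and a $P_3$ hanging on $u$. So a genuine energy estimate is needed. The paper supplies it via \Cref{lem:oneP3+onecliques}, with $X_1=V(Q)$, $X_2=V(G-H)$, and the vertex $v\in N(u)\cap X_2$, which has no neighbour in $Q$. The five-vertex graph $G[V(Q)\cup\{u,v\}]$ is one of $P_5,H_3,H_4,H_6,H_9$, so its negative $3$-energy is at least $6$. The remainder $X_2\setminus\{v\}$ is a clique on $n-5$ vertices contributing $n-6$. \Cref{thm: additivity} then gives $\mathcal{E}_3^-(G)\ge n$.

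Your treatment of a disconnected $G-H$ is also not yet an argument. The worry itself is legitimate: another component of $G-N[v]$ may be attached to $N(v)$ only through $u$, and the paper simply asserts that $G-H$ is connected. But the tools you name do not close it. When $H\cong P_4$ or $H\cong H_1$, you only have $\mathcal{E}_3^-(H)\approx 4.47$ or $4.25$. Meanwhile \Cref{claim:connected-components} charges a loss of $1$ for every component of $G-H$ that is a clique or a $P_3$. So a single isolated vertex hanging on $u$ already leaves the additivity count below $n$. ``Adjusting $u$ or using the degree maximality of $v$'' would have to be turned into an actual construction, for instance a different choice of $H$, together with its own energy estimate.
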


\begin{proof*}
Assume to the contrary that in $G-N[v]$ there is a connected component $C$ that is not a clique. Since $G$ is connected, there exists a vertex, say $u$, of $N(v)$ adjacent to some vertex of $V(C)$. Note that $|C|\geq 3$, as otherwise, it can only be $K_1$ or $K_2$, which is a clique. Since $\Delta \geq 3$, $G-C$ contains $K_{1,3}$ as a subgraph and thus $|G-C|\geq 4$. Also, $G-C$ is connected. We first claim that $G-C$ is not a complete graph. Assume to the contrary that $G-C$ is isomorphic to a complete graph $K_k$ for some positive integer $k$. Since $u$ has an extra neighbor in $C$ that is not adjacent to $v$, $d_G(u)>d_G(v)=\Delta$, a contradiction. Therefore, $G-C$ is isomorphic to neither $P_3$ nor a clique.

Since $C$ is not a clique, by~\Cref{claim:H+G-H} $C$ is isomorphic to $P_3$. Recall that $u$ is a vertex of $N(v)$ adjacent to some vertex of $C$. Note that $H:=G[V(C)\cup \{u\}]$ is a connected induced subgraph of $G$, and $G-H$ is also connected. Since $|H|=4$,  $|G-H|=n-4\geq 7$ and thus $G-H$ cannot be isomorphic to $P_3$. Noting that $H$ is isomorphic to neither a clique (as it contains $P_3$ as an induced subgraph) nor $P_3$, \Cref{claim:H+G-H} implies that $G-H$ must be a clique. Note that $v$ is not connected to $C$ (which is isomorphic to $P_3$) and $vu\in E(G)$. So $V(G)\setminus \{u\}$ admits a partition $\{V(C), V(G-H)\}$ satisfying the conditions in~\Cref{lem:oneP3+onecliques}, and thus $\mathcal{E}_3^{-}(G) \geq n$, a contradiction. 
\end{proof*}

\medskip
In the sequel, by~\Cref{claim:G-N[v]isUnionOfCliques} we may assume that $G - N[v]$ is a disjoint union of cliques and let $V(G)-N[v]=C_1\cup \cdots \cup C_k$ such that $G[C_i]$'s are pairwise disjoint cliques for some $k\geq 1$. Let $N(v)=\{u_1, \ldots, u_{\Delta}\}$. For each $u_i$ with $i\in \{1,\ldots, \Delta\}$, we denote the vertex set of the cliques that are connected to $u_i$ by $C_{i_1}, \ldots, C_{i_t}$ for some $t$ if they exist.

\begin{claim}\label{claim:u+C_i}
Let $u\in N(v)$ be a vertex and let $C_1, \ldots, C_t$ be cliques of $G-N[v]$ such that $u$ is adjacent to some vertex of $C_i$ for each $i\in [t]$.
\begin{enumerate}[label=(\arabic*)]
\setlength{\itemsep}{0em}
    \item\label{1-C_i+u} If there exists $i\in [t]$ such that $|C_i|\geq 3$, then $t=1$ and $G[\{u\}\cup C_1]$ is a clique. 
    \item\label{2-C_i+u} If there exists $i\in [t]$ such that $|C_i|=2$, then $t=1$ and $G[\{u\}\cup C_1]$ is isomorphic to either a clique or $P_3$. 
    \item\label{3-C_i+u} If $|C_i|=1$ for each $i\in[t]$, then $t\leq 2$ and $G[\{u\}\cup \bigcup_{i=1}^t C_i]$ is isomorphic to either a clique or $P_3$.
\end{enumerate}
\end{claim}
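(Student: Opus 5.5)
The plan is to apply \Cref{claim:H+G-H} to the connected induced subgraph $H:=G[\{u\}\cup\bigcup_{i=1}^t C_i]$ (connected, since each $C_i$ is a clique with a neighbour of $u$), or to small pieces of it. Its complement $G-H$ contains $v$ and $N(v)\setminus\{u\}$, and every other clique of $G-N[v]$ still attaches to $N(v)$. So $G-H$ is connected as soon as $v$ still reaches, through $N(v)\setminus\{u\}$, every clique $C_j$ not adjacent to $u$. The cliques $C_1,\dots,C_t$ are removed together with $u$, so this holds automatically. Moreover $G-H$ contains $v$ with at least $\Delta-1\ge 2$ neighbours. It is not a clique, because otherwise $v$ together with the nonempty set $G-N[v]$ would force a contradiction: if $G-H\supseteq N[v]\setminus\{u\}$ were a clique with $C_j$ outside, degrees would exceed $\Delta$, as in the proof of \Cref{claim:G-N[v]isUnionOfCliques}. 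It is also not a $P_3$ once $|G-H|\ge 4$. When $G-H$ is small, $n\ge 11$ forces $H$ to be large, and I will handle that as a degenerate case by bounding $\Delta$. In the generic case, \Cref{claim:H+G-H} then says that $H$ itself is a clique or a $P_3$. To get the finer statements I will also apply the same reasoning to suitable sub-configurations $H'=G[\{u\}\cup C_i]$ or $H'=G[\{u\}\cup C_i\cup C_j]$. Removing such an $H'$ may leave some other $C_l$ attached only to $u$, so I must choose $H'$ to contain all cliques whose only contact with $N(v)$ is $u$, or else argue connectivity via other neighbours.

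Given that $H$ must be a clique or $P_3$, the three parts follow by counting. For \ref{1-C_i+u}: if $|C_i|\ge 3$ and $t\ge 2$, then $H$ contains the clique $C_i$ of size at least $3$ together with a vertex of another $C_j$ nonadjacent to $C_i$. So $H$ is not a clique, and it has more than $3$ vertices, so it is not a $P_3$. Hence $t=1$. Then $H=G[\{u\}\cup C_1]$ with $|C_1|\ge3$ can only be a clique.

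For \ref{2-C_i+u}: similarly $t\ge 2$ would give $|H|\ge 4$ and $H$ non-complete, so $t=1$. Then $|H|=3$, and $H$ is $K_3$ or $P_3$.

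For \ref{3-C_i+u}: $H$ is a star centred at $u$ with $t$ leaves, which are pairwise nonadjacent since they lie in distinct components. This is a clique only if $t=1$ (giving $K_2$) and a $P_3$ only if $t=2$. So $t\le 2$ with the stated structure.

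The main obstacle is justifying that $G-H$ is connected and is neither a clique nor a $P_3$, so that \Cref{claim:H+G-H} forces the conclusion onto $H$. Connectivity should be fine since $G-N[v]$ has no edges to $v$, and all remaining cliques attach to $N(v)\setminus\{u\}$ or to $u$ (the latter being inside $H$). For the non-clique part I would mimic \Cref{claim:G-N[v]isUnionOfCliques}. If $G-H$ were a clique containing $v$, then $G-H\subseteq N[v]$. Since $n\ge 11$, either $H$ is large, which contradicts $H$ being a star or a small clique in the bad cases, or a vertex of $N(v)\setminus\{u\}$ adjacent to a remaining clique has degree exceeding $\Delta$. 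In the sub-case where $G-H$ is a clique and $H$ is also a clique, \Cref{claim:H+G-H} gives nothing. There I would instead apply \Cref{claim:connected-components} with \Cref{thm: additivity} to a split into $G-H$ and a $P_3$ or an induced non-clique piece, or appeal to \Cref{lem:UnionOfCliques} with the cut vertex $u$.
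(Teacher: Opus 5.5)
Your overall frame matches the paper's: apply \Cref{claim:H+G-H} to $H=G[\{u\}\cup\bigcup_i C_i]$, dispose of $G-H\cong P_3$ by bounding $\Delta$, and read off (1)--(3) from ``$H$ is a clique or a $P_3$''. That final counting is fine. The gap is your assertion that $G-H$ is not a clique.

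Suppose $G-H$ is a clique. It contains $v$, and $v$ has no neighbour in $G-N[v]$, so $V(G-H)=N[v]\setminus\{u\}$. Hence every clique of $G-N[v]$ is one of $C_1,\dots,C_t$, and your scenario ``with $C_j$ outside'' is vacuous. The degree argument of \Cref{claim:G-N[v]isUnionOfCliques} does not transfer either. There, $G-C$ contained all of $N[v]$, including the vertex with the extra neighbour. Here, a vertex $u^*\in N(v)\setminus\{u\}$ has only $\Delta-1$ neighbours inside $G-H$, so it may have one neighbour in $\bigcup_i C_i$ as long as $uu^*\notin E(G)$.

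For a concrete configuration, let $N[v]\setminus\{u\}$ induce $K_7$. Let $u$ be adjacent to $v$ and to three isolated vertices $x_1,x_2,x_3$ of $G-N[v]$, and let one $u^*\in N(v)\setminus\{u\}$ also be adjacent to $x_1$. Then $n=11$, $\Delta=7=d_G(v)$, $H\cong K_{1,3}$, $G-H$ is a clique, and $G-u$ is not a disjoint union of cliques. None of the arguments you give (degrees, \Cref{claim:H+G-H}, \Cref{lem:UnionOfCliques} at $u$) excludes this. It is exactly the case in which \Cref{claim:H+G-H} is satisfied by $G-H$ and says nothing about $H$, so it is the real content of the claim.

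Your last paragraph aims at the wrong sub-case: if $H$ is a clique there is nothing to prove. Moreover, splitting $G$ into $H$ and $G-H$ only yields $\mathcal{E}_3^-(G)\ge |H|+(|G-H|-1)=n-1$.

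What is missing is an energy argument with a piece whose $3$-energy exceeds its order by enough to pay for the extra components. This is what the paper does.
\begin{itemize}
\item If no vertex of $N(v)\setminus\{u\}$ has a neighbour in $\bigcup_i C_i$, then $G-u$ is a disjoint union of cliques and \Cref{lem:UnionOfCliques} applies.
\item Otherwise pick $u^*\in N(v)\setminus\{u\}$ adjacent to some $w\in C_{i^*}$. Then $uu^*\notin E(G)$, since otherwise $d_G(u^*)\ge\Delta+1$.
\item Apply \Cref{claim:H+G-H} to the connected non-clique $G[C_{i^*}\cup V(G-H)]$; it has at least $4$ vertices and its complement $H-C_{i^*}$ is connected. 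This shows $H-C_{i^*}$ is a clique or a $P_3$, so $H-C_{i^*}-u$ has at most two components.
\item Now $F=G[\{v,u,w,u^*\}]$ is an induced $C_4$ with $\mathcal{E}_3^-(F)=8$. This needs $uw\in E(G)$, which is automatic when $|C_{i^*}|=1$.
\item $G-F$ has at most four components, so \Cref{claim:connected-components} and \Cref{thm: additivity} give $\mathcal{E}_3^-(G)\ge 8+(n-4)-4=n$, a contradiction.
\end{itemize}
In the configuration above, $F$ on $\{v,u,x_1,u^*\}$ leaves three components and gives $\mathcal{E}_3^-(G)\ge n+1$.
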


\begin{proof*}
Let $u\in N(v)$ be a vertex and let $C_1, \ldots, C_t$ be cliques of $G-N[v]$ such that $v$ is adjacent to some vertex of $C_i$ for each $i\in [t]$. Let $H:=G[\{u\}\cup \bigcup_{i=1}^t C_i]$. We only consider the following conditions: (1) $\exists |C_i|\geq 3$ and $t\geq 1$; (2) $\exists |C_i|=2$ and $t\geq 2$; and (3) $|C_i|=1$ and $t\geq 3$. (In the remaining cases, $H$ is isomorphic to $P_3$, which already satisfies the conclusion of the claim.)
Assume to the contrary that $H$ is not a clique. Under these conditions, $|H|\geq 4$, and thus $H$ is not isomorphic to $P_3$. Moreover, since $v\in V(G)\setminus V(H)$, $G-H$ is connected. 

Note that $G-H$ is not isomorphic to $P_3$. Otherwise, $\Delta=d_G(v)\leq 3$. Since $uv\in E(G)$ and $u$ is adjacent to some vertex of each $C_i$'s for $i\in [t]$, we have that $t\leq 2$. Moreover, as $\Delta\leq 3$, 
$|C_i|\leq 3$ for each $i\in [2]$. However, $|G|\leq |G-H|+|\{u\}|+|C_1|+|C_2|\leq 3+1+3+3=10$, a contradiction. By~\Cref{claim:H+G-H}, $G-H$ must be a clique.

Assume that $G-H$ is a clique. If there is no edge between $G-H$ and $\bigcup_{i=1}^t C_i$, then $G-u$ is a disjoint union of cliques $C_1, \ldots, C_t$, and $G-H$. By~\Cref{lem:UnionOfCliques}, $\mathcal{E}_3^{-}(G) \geq n$, a contradiction. So we know that there exists at least one vertex $u^*\in V(G-H)\setminus \{v\}$ that is adjacent to some vertex, say $w$, in one $C_{i^*}$ for some $i^*\in [t]$. We claim that $u^*$ is not adjacent to $u$ in $G$. Otherwise, if $u^*u\in E(G)$, then $N(v)\subset N(u^*)$ and $w\in N(u^*)\setminus N(v)$, and, hence, $d_G(u^*)>d_G(v)=\Delta$, a contradiction. In this case, since $v$ is not adjacent to any vertex in $C_{i^*}$, $G[C_{i^*}\cup V(G-H)]$ is not a clique. Moreover, as $\Delta\geq 3$, $|G[C_{i^*}\cup V(G-H)]|\geq 4$ and thus $G[C_{i^*}\cup V(G-H)]$ is not isomorphic to $P_3$. Note that $G-G[C_{i^*}\cup V(G-H)]$, which is exactly $H-C_{i^*}$, is connected. By~\Cref{claim:H+G-H}, $H-C_{i^*}$ is isomorphic to either a clique or $P_3$. 

Now we consider the induced subgraph $F:=G[\{v,u,w,u^*\}]$ which is isomorphic to $C_4$. By~\Cref{table:3neg-energy}, $\mathcal{E}_3^{-}(F)=8$. Recall that $u^*\in C_{i^*}$ and $G-H$ is a clique. Now $G-F$ consists of at most four connected components, that is, at most one from $G-H-\{v,u^*\}$, at most one from $C_{i^*}-w$, and at most two from $H-C_{i^*}-u$. Hence, by~\Cref{claim:connected-components}, $\mathcal{E}_3^{-}(G-F)\geq (n-4)-4=n-8$. Altogether, by~\Cref{thm: additivity}, $\mathcal{E}_3^{-}(G)\geq\mathcal{E}_3^{-}(F)+\mathcal{E}_3^{-}(G-F)\geq 8+(n-8)=n$, a contradiction.
\end{proof*}

\begin{claim}\label{claim:Si+Ci}
Let $C_i$ be one clique in $G-N[v]$ and let $S_i=\{u_{i_1}, \ldots, u_{i_\ell}\}$ with $\ell\geq 2$ be the set of vertices each adjacent to some vertex of $C_i$. 
\begin{enumerate}[label=(\arabic*)]
\setlength{\itemsep}{0em}
    \item\label{1-C_i+S_i} If $|C_i|\geq 2$, then $G[C_i\cup S_i]$ is a clique.
    \item\label{2-C_i+S_i} If $|C_i|=1$, then either $G[C_i\cup S_i]$ is a clique or $|S_i|=2$ and $G[C_i\cup \{u_{i_1}, u_{i_2}\}]$ is isomorphic to $P_3$.
\end{enumerate}
\end{claim}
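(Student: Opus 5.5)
The plan mirrors the proof of \Cref{claim:u+C_i}, applied to the induced subgraph $H:=G[C_i\cup S_i]$. I would argue by contradiction: assume $H$ is not a clique and, in part (2), that we are not in the case $|S_i|=2$ with $H\cong P_3$. In this situation $|H|\geq 3$, and $H\cong P_3$ can only happen when $|C_i|=1$ and $|S_i|=2$, which is the excluded case. So $H$ is neither a clique nor $P_3$. Also, $H$ is connected, because every vertex of $S_i$ has a neighbour in the clique $C_i$.

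The first step is to show $G-H$ is connected. Since $v\notin H$, every vertex of $N(v)\setminus S_i$ stays attached to $v$. The only possible trouble is a clique $C_j$ ($j\neq i$) all of whose neighbours in $N(v)$ lie in $S_i$. Two routes handle this.
\begin{itemize}
\item Use \Cref{claim:u+C_i}. If some $u\in S_i$ also sees $C_j$, then $t\geq 2$ for $u$. This forces $|C_i|=|C_j|=1$ and $G[\{u\}\cup C_i\cup C_j]\cong P_3$.
\item Alternatively, pass to $H':=H\cup C_j$, or take $H$ to be a maximal connected union, and apply \Cref{claim:connected-components} to $G-H$, counting components.
\end{itemize}
I expect this connectivity and component bookkeeping to be the main obstacle. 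I would first try the cleaner route: choose $H$ together with all cliques hanging only on $S_i$, then check that the resulting graph is still not a clique or $P_3$.

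Once $G-H$ is connected, \Cref{claim:H+G-H} forces $G-H$ to be a clique or $P_3$.
\begin{itemize}
\item If $G-H\cong P_3$, then $\Delta=d(v)\leq |S_i|+2$, and all of $C_i\cup S_i$ has size bounded in terms of $\Delta$. Together with the clique sizes from \Cref{claim:u+C_i}, this gives $n\leq 10$, a contradiction, exactly as in the previous claim.
\item If $G-H$ is a clique containing $v$, then every vertex of $N(v)\setminus S_i$ is adjacent to $v$ and to all of $G-H$, and all other cliques $C_j$ lie inside $G-H$. Hence every vertex of $G-H$ is adjacent to $v$, so $G-H\subseteq N[v]$ and $G-N[v]=C_i$.
\end{itemize}

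Now use the maximality of $d(v)$. A vertex $u\in S_i$ adjacent to all of $N(v)\setminus\{u\}$ would have degree greater than $\Delta$, because it also has a neighbour in $C_i$. So each $u\in S_i$ misses some $x\in N(v)$, giving an induced configuration such as $C_4$ or $P_4$ on $\{v,u,w,x\}$ or $\{x,v,u,w\}$ with $w\in C_i$. I would take $F$ to be a $C_4$ (energy $8$) if some $x\in S_i$ is also adjacent to $w$; otherwise I would use one of the table graphs $H_7,H_9,H_{10}$ formed by $v$, two vertices of $S_i$ and $C_i$. Then $G-F$ has few components: at most one from the remaining clique part and at most two from the remains of $H$. \Cref{claim:connected-components} together with \Cref{thm: additivity} then gives $\mathcal{E}_3^-(G)\geq n$, a contradiction.

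If instead $H$ fails to be a clique because two vertices $u,u'\in S_i$ are nonadjacent, then $\{v,u,u',w\}$, with $w$ a common neighbour in $C_i$ or a path through $C_i$, gives an induced $C_4$ or $H_6/H_7$-type graph. The same energy count finishes the argument. In part (1), $|C_i|\geq 2$ also rules out the $P_3$ exception, because any non-clique $H$ then has at least $4$ vertices.
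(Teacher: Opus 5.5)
Your frame matches the paper's: argue by contradiction with $H=G[C_i\cup S_i]$, invoke \Cref{claim:H+G-H}, and split on whether $G-H$ is $P_3$ or a clique. The connectivity of $G-H$, which you call the main obstacle, is only in doubt when $|C_i|=1$: there a vertex of $S_i$ may also see a second singleton clique. For $|C_i|\ge 2$ your first route via \Cref{claim:u+C_i} settles it, and the paper simply asserts connectivity because $G-H$ contains $v$. The genuine gaps are in the two branches.

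\emph{The branch $G-H\cong P_3$.} The bound $\Delta\le|S_i|+2$ does not bound $n$, because nothing bounds $|S_i|$: take $C_i=\{c\}$, $G-H$ a path $v\,a\,b$, and $|S_i|$ arbitrary. The analogous step in \Cref{claim:u+C_i} worked only because there $V(H)\cap N(v)=\{u\}$, which forces $\Delta\le 3$. Your estimate does handle $|S_i|=2$: then $\Delta\le 4$, $|C_i|\le 3$ and $n\le 8$. For $|S_i|\ge 3$ you need the paper's peeling trick. Choose $u^*\in S_i$ outside a fixed non-adjacent pair of $H$, and set $H'=G[C_i\cup(S_i\setminus\{u^*\})]$. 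Then $H'$ is connected, not a clique, and has $n-4\ge 7$ vertices. Meanwhile $G-H'$ is connected (through $u^*v$), has $4$ vertices, and contains the induced $P_3$ $G-H$. So neither side is a clique or $P_3$, contradicting \Cref{claim:H+G-H}.

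\emph{The branch where $G-H$ is a clique} (so $G-N[v]=C_i$). Here your component counts are unjustified and in general false. Take $G=K_{2,m}$ with parts $\{v,c\}$ and $S_i$, and $C_i=\{c\}$. Deleting the induced $C_4$ on $\{v,u,u',c\}$ leaves $m-2$ isolated vertices, so \Cref{claim:connected-components} gives only $8+(n-4)-(m-2)<n$ once $m\ge 7$.

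For $|C_i|=1$ the missing idea is a split on $\alpha(G[S_i])$. An independent triple $\{x,y,z\}$ gives an induced $K_{1,3}$ centred at $c$, with $\mathcal{E}_3^-=3\sqrt3>5$ and connected complement, since everything left lies in $N[v]$. If instead $\alpha(G[S_i])\le 2$, removing the $C_4$ leaves at most three components.

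For $|C_i|\ge 2$ the paper controls components by re-applying \Cref{claim:H+G-H} to smaller sets. When $G[S_i]$ is not a clique, it applies it to $G[C_i\cup\{u_{i_1},u_{i_2}\}]$. Then the $C_4$, or the $C_5$ (not $H_6/H_7$), leaves few components. When $G[S_i]$ is a clique, it applies it to $G[\{u,u',w_1,w_2\}]$, which makes $G-G[\{v,u,u',w_1,w_2\}]$ a single clique. This step is indispensable: that five-vertex graph can be $H_8$ (not $H_9$), with $\mathcal{E}_3^-\approx 6.80$, which can absorb only one lost component.

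Finally, your max-degree paragraph does not use the non-clique hypothesis when the missed vertex $x$ lies outside $S_i$. The induced $P_4$ it produces has $\mathcal{E}_3^-\approx 4.47$, which cannot pay for even one lost component.
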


\begin{proof*}
Let $S_i=\{u_{i_1}, \ldots, u_{i_\ell}\}$ with $\ell\geq 2$ and $H:=G[C_i \cup S_i]$. 
If $|C_i|=1$ and $\ell=2$, then $G[C_i\cup \{u_{i_1}, u_{i_2}\}]$ is isomorphic to either $P_3$ or a clique. In the following we consider the case when $|C_i|\geq 2$ or $|C_i|=1$ and $\ell\geq 3$. In this case, $|H|\geq 4$ and thus $H$ is not isomorphic to $P_3$. Assume to the contrary that $H$ is not a clique.

Note that $G-H$ contains the vertex $v$ and it is connected. It follows from~\Cref{claim:H+G-H} that $G-H$ is either a clique or isomorphic to $P_3$. We consider two cases as follows.

\smallskip
\noindent
\textbf{Case (1)}. \emph{Assume that $G-H$ is isomorphic to $P_3$.} 
\smallskip

In this case, we first claim that $|S_i|\geq 3$ (i.e., $\ell\geq 3$). Otherwise, since $|G-H|=3$ and $|S_i|=2$, we have that $\Delta=d_G(v)\leq 4$. Note that by~\Cref{claim:u+C_i}\ref{1-C_i+u}, if $|C_i|\geq 3$, then the vertex $u_{i_1}$ is adjacent to all vertices of $C_i$ and thus $|C_i|+1\leq d_G(u)\leq \Delta=4$, which implies that $|C_i|=3$. So $|C_i|\leq 3$ and $|G|= |C_i|+|S_i|+|G-H|\leq 3+2+3=8$, a contradiction. Thus we have that $\ell\geq 3$. 

Since $S_i$ contains at least $3$ vertices and $G[C_i\cup S_i]$ is not a clique, there exists a vertex $u^*\in S_i$ such that $G[C_i\cup \big(S_i\setminus \{u^*\}\big)]=:H'$ is not a clique as well. Since $G-H$ is isomorphic to $P_3$, $G-H'$ has exactly $4$ vertices and contains $P_3$ as an induced subgraph. Hence, $G-H'$ is neither a clique nor isomorphic to $P_3$.
Also $G-H'$ contains the vertex $v$ (adjacent to $u^*$) and thus is connected. Moreover, $H'$ has $n-4$ ($\geq 7$) vertices and thus is not isomorphic to $P_3$. So $H'$ is neither a clique nor isomorphic to $P_3$, contradicting~\Cref{claim:H+G-H}.

\smallskip
\noindent
\textbf{Case (2)}. \emph{Assume that $G-H$ is a clique.} 
\smallskip

\noindent
\textbf{Case (2.1)}. \emph{Assume that $|C_i|=1$ and $|S_i|\geq 3$.} 
\smallskip

In this case, as $G[C_i\cup S_i]$ is not a clique, $G[S_i]$ is not a clique as well. If $G[S_i]$ has an independent set of size at least $3$, assuming that $\{x,y,z\}$ is an independent set of $G[S_i]$, then $G[C_i\cup\{x,y,z\}]$ is isomorphic to $K_{1,3}$. By~\Cref{table:3neg-energy}, $\mathcal{E}_3^{-}(G[C_i\cup\{x,y,z\}])\approx 5.19 > 5$. Since $G-G[C_i\cup\{x,y,z\}]$ is connected, by~\Cref{claim:connected-components}, $\mathcal{E}_3^{-}(G-G[C_i\cup\{x,y,z\}])\geq (n-4)-1=n-5$. Altogether, by~\Cref{thm: additivity}, $\mathcal{E}_3^{-}(G)\geq\mathcal{E}_3^{-}(G[C_i\cup\{x,y,z\}])+\mathcal{E}_3^{-}(G-G[C_i\cup\{x,y,z\}])>5+(n-5)=n$, a contradiction.  

So assume that the independence number of $G[S_i]$ is at most $2$. Since $G[S_i]$ is not a clique, there exists two non-adjacent vertices $x\in S_i$ and $y\in S_i$ such that $G[C_i\cup \{x,y,v\}]$ is isomorphic to $C_4$. By~\Cref{table:3neg-energy}, $\mathcal{E}_3^{-}(G[C_i\cup \{x,y,v\}])=8$. As $G[S_i]$ has no independent set of size larger than $2$, $G-G[C_i\cup \{x,y,v\}]$ consists of at most two connected components. By~\Cref{claim:connected-components}, $\mathcal{E}_3^{-}(G-G[C_i\cup \{x,y,v\}])\geq (n-4)-2=n-6$. Altogether, by~\Cref{thm: additivity}, $\mathcal{E}_3^{-}(G)\geq\mathcal{E}_3^{-}(G[C_i\cup \{x,y,v\}])+\mathcal{E}_3^{-}(G-G[C_i\cup \{x,y,v\}])\geq 8+n-6>n$, a contradiction.

\smallskip
\noindent
\textbf{Case (2.2)}. \emph{Assume that $|C_i|\geq 2$ and $|S_i|\geq 2$.} 
\smallskip

Recall that $G[C_i\cup S_i]$ is not a clique. We have two possibilities.

Assume that $G[S_i]$ is not a clique. So there are non-adjacent vertices $u_{i_1}\in S_i$ and $u_{i_2}\in S_i$. Since $|C_i|\geq 2$, $G[C_i\cup\{u_{i_1}, u_{i_2}\}]$ has more than $4$ vertices and thus it is isomorphic to neither $P_3$ nor a clique. Furthermore, $G-G[C_i\cup\{u_{i_1}, u_{i_2}\}]$ is connected. We first show that $G-G[C_i\cup\{u_{i_1}, u_{i_2}\}]$ is not isomorphic to $P_3$. Assume not and thus $\Delta=d_G(v)\leq 4$. Note that by~\Cref{claim:u+C_i}\ref{1-C_i+u}, if $|C_i|\geq 3$, then the vertex $u_{i_1}$ is adjacent to all vertices of $C_i$ and thus $|C_i|+1\leq d_G(u)\leq \Delta=4$, which implies that $|C_i|=3$. So $|C_i|\leq 3$ and $|G|= |G[C_i\cup\{u_{i_1}, u_{i_2}\}]|+|G-G[C_i\cup\{u_{i_1}, u_{i_2}\}]|\leq (3+2)+3=8$, a contradiction.  By~\Cref{claim:H+G-H}, $G-G[C_i\cup\{u_{i_1}, u_{i_2}\}]$ must be a clique. If there exist $v'\in C_i$ such that $v'$ is adjacent to both of $u_{i_1}$ and $u_{i_2}$, then $H'_1:=G[\{v,u_{i_1},u_{i_2},v'\}]$ is isomorphic to $C_4$. If no such vertex $v'$ exists, then by~\Cref{claim:u+C_i}\ref{1-C_i+u}, $|C_i|=2$ and $|S_i|=2$ (i.e., $S_i=\{u_{i_1}, u_{i_2}\}$), and, moreover, $H'_2:=G[C_i\cup S_i\cup \{v\}]$ is isomorphic to $C_5$. 
By~\Cref{table:3neg-energy}, for each $i\in \{1,2\}$, $\mathcal{E}_3^{-}(H'_i)\geq \min\{8,~8.472 \}\geq 8$. 
Noting that each $G-H'_i$ consists of at most three connected components (since $G-G[C_i\cup S_i]$ is a clique) and $|H'_i|\in \{4,5\}$, by~\Cref{claim:connected-components}, $\mathcal{E}_3^{-}(G-H'_i)\geq (n-5)-3=n-8$. Altogether, by~\Cref{thm: additivity}, $\mathcal{E}_3^{-}(G)\geq\mathcal{E}_3^{-}(H'_i)+\mathcal{E}_3^{-}(G-H'_i)\geq 8+n-8=n$, a contradiction.

Assume that $G[S_i]$ is a clique. In this case, by~\Cref{claim:u+C_i}\ref{1-C_i+u} and \ref{2-C_i+u}, $|C_i|=2$ and there exists at least one vertex $u\in S_i$ such that $G[C_i\cup \{u\}]$ is isomorphic to $P_3$. Assume without loss of generality that $C_i=\{w_1, w_2\}$ and $uw_1w_2$ is an induced $P_3$. Let $u'\in S_i\setminus \{u\}$ be an arbitrary vertex. Note that $H'':=G[\{u,u',w_1,w_2\}]$ has $4$ vertices and thus is not a clique. Since $G-H''$ is connected (as it contains $v$), by~\Cref{claim:H+G-H}, $G-H''$ is isomorphic to either $P_3$ or a clique. If $G-H''$ is isomorphic to $P_3$, then $|G|=|H''|+|G-H''|=4+3=7$, a contradiction. Now we assume that $G-H''$ is a clique. In this case, $H''':=G[\{v,u,u',w_1,w_2\}]$ is isomorphic to one of the following graphs: $H_7$, $H_8$, and $H_{10}$, as shown in~\Cref{fig:v-u-C_i}.
\begin{figure}[htbp]
\centering
\begin{subfigure}[t]{.32\textwidth}
\centering
\begin{tikzpicture}[every node/.style={draw,circle,fill=black,inner sep=1.5pt}]
\node (v) at (1,2) {};\node[draw=none, fill=none, right=2pt] at (v) {$v$};

\node (1) at (0,0) {}; \node (2) at (1,0) {}; \node (3) at (1,1) {}; \node (4) at (0,1) {};
\node[draw=none, fill=none, right=2pt] at (3) {$u'$};
\node[draw=none, fill=none, left=2pt] at (4) {$u$};
\node[draw=none, fill=none, left=2pt] at (1) {$w_1$};
\node[draw=none, fill=none, right=2pt] at (2) {$w_2$};

\draw (1)--(2)--(3)--(4)--(1);
\draw (3)--(v);
\draw (4)--(v);
\end{tikzpicture}
\caption{$H_7$}
\label{fig:type1}
\end{subfigure}%
\begin{subfigure}[t]{.32\textwidth}
\centering
\begin{tikzpicture}[every node/.style={draw,circle,fill=black,inner sep=1.5pt}]
\node (v) at (1,2) {};\node[draw=none, fill=none, right=2pt] at (v) {$v$};

\node (1) at (0,0) {}; \node (2) at (1,0) {}; \node (3) at (1,1) {}; \node (4) at (0,1) {};
\node[draw=none, fill=none, right=2pt] at (3) {$u'$};
\node[draw=none, fill=none, left=2pt] at (4) {$u$};
\node[draw=none, fill=none, left=2pt] at (1) {$w_1$};
\node[draw=none, fill=none, right=2pt] at (2) {$w_2$};

\draw (4)--(1)--(2);
\draw (3)--(v);
\draw (4)--(v);
\draw (1)--(3)--(4);
\end{tikzpicture}
\caption{$H_{8}$}
\label{fig:type2}
\end{subfigure}
\begin{subfigure}[t]{.32\textwidth}
\centering
\begin{tikzpicture}[every node/.style={draw,circle,fill=black,inner sep=1.5pt}]
\node (v) at (1,2) {};
\node[draw=none, fill=none, right=2pt] at (v) {$v$};
\node (1) at (0,0) {}; 
\node (2) at (1,0) {}; 
\node (3) at (1,1) {}; 
\node (4) at (0,1) {};
\node[draw=none, fill=none, right=2pt] at (3) {$u'$};
\node[draw=none, fill=none, left=2pt] at (4) {$u$};
\node[draw=none, fill=none, left=2pt] at (1) {$w_1$};
\node[draw=none, fill=none, right=2pt] at (2) {$w_2$};

\draw (4)--(1)--(2)--(3);
\draw (3)--(v);
\draw (4)--(v);
\draw (1)--(3)--(4);
\end{tikzpicture}
\caption{$H_{10}$}
\label{fig:type3}
\end{subfigure}
\caption{The possibilities of $H'''$ in Case (2.2) of \Cref{claim:Si+Ci}}
\label{fig:v-u-C_i}
\end{figure}
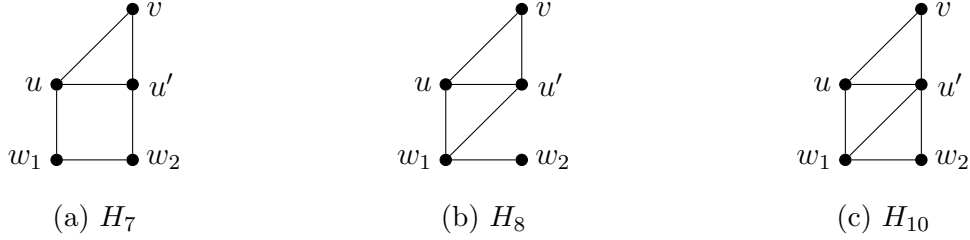
By~\Cref{table:3neg-energy}, $\mathcal{E}_3^{-}(H''')\geq \min\{9.602,~6.804,~7.530\}\geq 6$. Since $G-H''$ is a clique, $G-H'''$ is also a clique and thus by~\Cref{claim:connected-components}, $\mathcal{E}_3^{-}(G-H''')\geq (n-5)-1=n-6$. By~\Cref{thm: additivity}, $\mathcal{E}_3^{-}(G)\geq\mathcal{E}_3^{-}(H''')+\mathcal{E}_3^{-}(G-H''')\geq 6+n-6=n$, a contradiction.

\medskip
We complete the proof of the claim.
\end{proof*}

\medskip
For each clique $C_i$, let $S_i$ denote the set of vertices in $N(v)$ that are adjacent to some vertex in $C_i$.
The next claim concerns the vertices in $N(v)\setminus \bigcup_{i}S_i$, that is, those vertices in $N(v)$ that are not adjacent to any vertex in any $C_i$.

\begin{claim}\label{claim:non-S_i-vertices}
Let $G'=G-v$. Let $X=\{u'_1, u'_2, \ldots, u'_s\}$ denote a subset of $N(v)\setminus \bigcup_{i}S_i$ such that $G'[X]$ is connected. Then $G'[X]$ is either a clique or isomorphic to $P_3$.
\end{claim}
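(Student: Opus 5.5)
The plan is to apply \Cref{claim:H+G-H} with $H:=G[X]$, viewed as an induced subgraph of $G$. This is the same template as \Cref{claim:u+C_i} and \Cref{claim:Si+Ci}. First, $H=G'[X]=G[X]$ is connected by hypothesis. Next, $G-H$ is connected. It contains $v$, and every vertex of $N(v)\setminus X$ is adjacent to $v$. Every clique $C_i$ of $G-N[v]$ has a neighbour in some $S_i$, and $S_i\subseteq N(v)\setminus X$ because $X$ avoids $\bigcup_i S_i$. (By connectivity of $G$, each $C_i$ has a neighbour in $N(v)$.) So all of $G-H$ hangs off $v$. Hence \Cref{claim:H+G-H} gives the dichotomy: either $H$ is a clique or $P_3$, which is the desired conclusion, or $G-H$ is a clique or $P_3$. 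It therefore suffices to rule out the second alternative when $H$ is neither a clique nor $P_3$. In that situation $|X|\geq 4$, since a connected non-clique on three vertices is $P_3$.

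\emph{Step 1: $G-H$ is not $P_3$.} Since $G-N[v]\neq\emptyset$, $G-H$ contains $v$, some $u\in S_i$ and a vertex of $C_i$. If $G-H\cong P_3$, these are exactly its three vertices, so $|C_i|=1$, $k=1$, and $N(v)=X\cup\{u\}$. Then $n=|X|+3$ with $|X|=\Delta-1$, giving $\Delta=n-2\geq 9$. In this case I would pass to a smaller piece. Take $x\in X$ such that $G[X\setminus\{x\}]$ is still connected, for instance a leaf of a spanning tree of $G[X]$. Choose $x$ so that $G[X\setminus\{x\}]$ is still not a clique; this is possible when $|X|\geq 4$ unless $X$ has a special form, which I would handle by hand. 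Apply \Cref{claim:H+G-H} to $H'=G[X\setminus\{x\}]$. Now $G-H'$ has $4$ vertices and contains the induced $P_3$ on $v,u,C_1$, so it is neither a clique nor $P_3$. Also $H'$ has $n-4\geq 7$ vertices, so it is not $P_3$. This contradicts \Cref{claim:H+G-H}, exactly as in Case (1) of \Cref{claim:Si+Ci}.

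\emph{Step 2: $G-H$ is not a clique.} If $G-H$ were a clique, it would contain $v$ together with vertices of some $C_i$. But $v$ has no neighbour in $C_i$, so $G-H$ is not complete, which is the required contradiction. This step is immediate.

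The expected obstacle is the choice of $x$ in Step 1. One needs a vertex whose removal keeps $G[X]$ connected and non-complete. If every non-cut vertex $x$ leaves a clique, then $G[X]$ is a clique minus the edges at one vertex, which is close to $K_{|X|-1}$ plus a pendant-type vertex. In that degenerate case I would instead directly exhibit an induced $K_{1,3}$ or $C_4$ through $v$ and $u$ and use \Cref{claim:connected-components}. Alternatively, I would use \Cref{lem:dominating}-type arguments, since there $v$ dominates $G-C_1$, or apply \Cref{lem:UnionOfCliques} to $G-v$, which would be a clique together with $C_1\cup\{u\}$.
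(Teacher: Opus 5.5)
Your route differs from the paper's and is viable in outline, but it leaves one step unproved.

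\textbf{How the paper avoids the issue.} The paper never applies \Cref{claim:H+G-H} to all of $X$. It takes an induced $P_3$ in $G'[X]$ together with one further vertex of $X$ adjacent to it. This gives a connected, non-complete $4$-vertex induced subgraph $H\subseteq G[X]$. Then $|G-H|=n-4\geq 7$ rules out the $P_3$ alternative immediately. Also $G-H$ is not a clique, since otherwise $v$ would be dominating; this is equivalent to your Step~2 observation.

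\textbf{The gap.} Taking $H=G[X]$ creates the extra case $G-H\cong P_3$. There you need some $x\in X$ with $G[X\setminus\{x\}]$ connected and non-complete. You do not prove such an $x$ exists. Instead you defer a ``special form'' of $X$ to fallbacks that do not work as stated. First, $G-v$ contains the connected, non-complete induced subgraph $G[X]$, so \Cref{lem:UnionOfCliques} does not apply. Second, there is no induced $C_4$ through $v$: the single vertex $c$ of $C_1$ is pendant at $u$, while $v$ is complete to $X\cup\{u\}$. Third, \Cref{lem:dominating} applied to $G-c$ only gives $n-1$.

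\textbf{How to close it.} The degenerate case is actually empty. Suppose a non-cut vertex $x_0$ of $G[X]$ leaves a clique $K=G[X\setminus\{x_0\}]$. Then $|K|=|X|-1\geq 3$, and $x_0$ has a neighbour $w$ and a non-neighbour $z$ in $K$. For any $y\in V(K)\setminus\{w,z\}$, the graph $G[X\setminus\{y\}]$ is connected through the edge $x_0w$. It is non-complete because $x_0z\notin E(G)$.

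With this, your Step~1 goes through as written. Your Step~2 is correct, and so is your connectivity argument for $G-H$, which is more explicit than the paper's. Still, the paper's choice of a $4$-vertex $H$ is the cleaner route, since it removes both the case split and the need for this auxiliary lemma.
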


\begin{proof*}
Let $G'=G-v$ and let $X=\{u'_1, u'_2, \ldots, u'_s\}$ denote a subset of $N(v)\setminus \bigcup_{i}S_i$ such that $G'[X]$ is connected. Assume to the contrary that $G'[X]$ is neither a clique nor isomorphic to $P_3$. Note that $|X|\geq 4$ and $G'[X]$ must contain an induced path, say $u'_1u'_2u'_3$, isomorphic to $P_3$. Since $|X|\geq 4$, there exists a vertex $u_4'\in X$ such that $H:=G[\{u'_1,u'_2,u'_3,u_4'\}]$ is connected. So the induced subgraph $H$ is neither a clique nor isomorphic to $P_3$. We first claim that $G-H$ is not a clique. Otherwise, $v$ is adjacent to all vertices in $G-H$ and also vertices in $H$, thus $v$ is a dominating vertex, a contradiction. 
Furthermore, as $n\geq 11$, $|G-H|\geq 7$ and thus $G-H$ is not isomorphic to $P_3$ as well. Since every pair of vertices in $X\setminus V(H)$ can be connected via a path through the vertex $v$, $G-H$ is connected and it contradicts~\Cref{claim:H+G-H}. 
\end{proof*}

The above claim implies that if $|X|\geq 4$, then $G'[X]$ is a clique.

\medskip
Based on the three exceptional cases in~\Cref{claim:u+C_i}\ref{2-C_i+u} and \ref{3-C_i+u}, and \Cref{claim:Si+Ci}\ref{2-C_i+S_i}, we define the following three types of vertices in $N(v)$, as shown in~\Cref{fig:G-v}. Let $u\in N(v)$ be a vertex.

\begin{figure}[ht]
\centering
\begin{tikzpicture}[
  every node/.style={draw,circle,fill=black,inner sep=1.5pt},
  xscale=1.5
]

\node (v) at (4.75,4) {};
\node[draw=none, fill=none, above=2pt] at (v) {$v$};

\node (u1)  at (1,2.5) {};
\node (u2)  at (2,2.5) {};
\node (u4)  at (2.75,2.5) {};
\node (u3)  at (4,2.5) {};
\node (u5)  at (5,2.5) {};
\node (u6)   at (5.5,2.5) {};
\node (u7)   at (6,2.5) {};
\node (u8) at (6.5,2.5) {};
\node (u9)  at (7,2.5) {};
\node (u10)  at (7.5,2.5) {};
\node (u11)  at (8,2.5) {};
\node (u12)  at (9,2.5) {};

\draw (v)--(u1);
\draw (v)--(u2);
\draw (v)--(u3);
\draw (v)--(u4);
\draw (v)--(u5);
\draw (v)--(u6);
\draw (v)--(u7);
\draw (v)--(u8);
\draw (v)--(u9);
\draw (v)--(u10);
\draw (v)--(u11);
\draw (v)--(u12);

\node (w11) at (1,1.5) {}; 
\node (w12) at (1,0.5) {};
\node (w21) at (1.8,1.5) {}; 
\node (w22) at (2.2,1.5) {};
\node (w31) at (4,1.5) {}; 
\node (w32) at (4.5,2.5) {};
\node (w41) at (3.25,2.5)  {}; 
\node (w42) at (3,1.5) {};
\node (w81) at (6.25,1.5) {}; 
\node (w82) at (6.75,1.5) {};
\node (w91) at (7,1.5) {}; 
\node (w102) at (7.5,1.5) {};
\node (w111) at (8.5,2.5) {};

\draw (v)--(w32);
\draw (v)--(w41);
\draw (u1)--(w11);  
\draw (w11)--(w12);
\draw (u2)--(w21);  
\draw (u2)--(w22);
\draw (u3)--(w31);  
\draw (u3)--(w32);
\draw (w42)--(w41);  
\draw (u4)--(w42);
\draw (u5)--(u6)--(u7);  
\draw (u8)--(w81);  
\draw (u8)--(w82);
\draw (u9)--(w91); 
\draw (w81)--(w82);
\draw (w91)--(u10)--(u9);
\draw (u10)--(w102)--(u9);
\draw (w91)--(w102);
\draw (u11)--(w111);
\draw (v)--(w111);

\node[draw=none, fill=none, below=5pt] at (1,0.5) {\footnotesize type $1$};
\node[draw=none, fill=none, below=5pt] at (2,0.5) {\footnotesize type $2$};
\node[draw=none, fill=none, below=5pt] at (3,0.5) {\footnotesize type $3$};
\end{tikzpicture}
\caption{Possibilities of $G-v$}
\label{fig:G-v}
\end{figure}
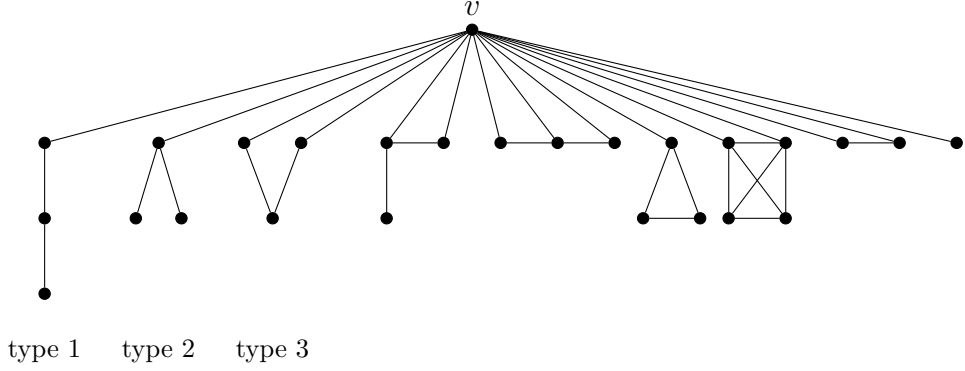

\begin{enumerate}[label=(\arabic*)]
\setlength{\itemsep}{0em}
    \item The vertex $u$ is of \emph{type $1$} if $u$ is adjacent to some vertex of one $C_i$ with $|C_i|=2$ such that $G[\{u\}\cup C_i]$ is isomorphic to $P_3$;
    \item The vertex $u$ is of \emph{type $2$} if $u$ is adjacent to some vertex of $C_i$ and $C_j$ with $|C_i|=|C_j|=1$ such that $G[\{u\}\cup C_i\cup C_j]$ is isomorphic to $P_3$;
    \item The vertex $u$ is of \emph{type $3$} if $u$ is adjacent to some vertex of one $C_i$ with $|C_i|=1$ and there is another $u'\in N(v)$ adjacent to $C_i$ such that $G[\{u,u'\}\cup C_i]$ is isomorphic to $P_3$. 
\end{enumerate}

Note that \Cref{claim:Si+Ci} implies that $u\in S_i$ for some $i$ in some clique of $G-v$ must be in a unique clique unless $u$ is of type $2$. The next claim tells us that any vertex $u\in N(v)$ of type $i$ for $i\in [3]$ cannot be of type $j$ for some $j\in [3]$, and must be in a unique copy of $P_3$. 

\begin{claim}\label{claim:uniqueP3}
Let $u\in N(v)$ be a vertex of type $i$ for $i\in [3]$. Then $u$ cannot be a vertex of type $j$ for any distinct $j\in [3]$ and $u$ is in a unique path isomorphic to $P_3$ in $G-v$.  
\end{claim}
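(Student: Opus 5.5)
The plan is to derive \Cref{claim:uniqueP3} directly from \Cref{claim:u+C_i} and \Cref{claim:Si+Ci}. These already pin down the local structure around each $u\in N(v)$, and they will be combined with the fact that $G-N[v]=C_1\cup\cdots\cup C_k$ consists of disjoint cliques. The first step is to record, for a fixed $u\in N(v)$, the cliques $C_{i_1},\dots,C_{i_t}$ that $u$ sees. By \Cref{claim:u+C_i}, either some $|C_{i_j}|\ge 2$ and $t=1$, or all have size $1$ and $t\le 2$.

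Next I separate the three types by this data. A type $1$ vertex sees a clique of size $2$. By \Cref{claim:u+C_i}\ref{2-C_i+u} it then sees exactly one clique, so it cannot see two singleton cliques and is not of type $2$. It also sees no singleton clique, so it is not of type $3$. A type $2$ vertex sees two singleton cliques, hence $t=2$ and it sees no clique of size $2$, so it is not of type $1$.

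To rule out type $2$ together with type $3$, suppose $u$ is adjacent to singletons $C_i=\{a\}$ and $C_j=\{b\}$, and also that some $u'\ne u$ in $N(v)$ is adjacent to $a$ with $u a u'$ an induced $P_3$. Then $|S_i|\ge 2$. By \Cref{claim:Si+Ci}\ref{2-C_i+S_i}, either $G[C_i\cup S_i]$ is a clique, or $S_i=\{u,u'\}$ and $uau'$ is an induced $P_3$. Only the second option is compatible with type $3$. In that case I expect to find a small induced subgraph with large negative $3$-energy, for example on $\{v,u,u',a,b\}$, and to derive a contradiction as in the earlier claims. This configuration has $a$ adjacent to $u$ and $u'$, $b$ adjacent to $u$, and $v$ adjacent to $u$ and $u'$. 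It is a $C_4$ with a pendant vertex when $uu'\notin E$, which is $H_6$ with $\mathcal{E}_3^-\approx 10.03$. The complement should have few components, since $G-N[v]$ consists of cliques and $v$ is removed. \Cref{claim:connected-components} then gives $\mathcal{E}_3^-(G-F)\ge (n-5)-c$, and \Cref{thm: additivity} yields at least $n$ whenever $c\le 10$. I also need to check the case $uu'\in E$, which I expect is excluded by the induced-$P_3$ requirement in the definition of type $3$. If this subgraph approach is awkward, the fallback is the degree argument $d(u)>\Delta$ used earlier.

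For uniqueness of the $P_3$ in $G-v$, I will list the candidate paths through $u$ in each type. For type $1$ the only path is $uw_1w_2$, because $u$ has no other neighbours outside $N(v)$ and neighbours of $u$ in $N(v)\setminus\bigcup S_i$ are controlled by \Cref{claim:non-S_i-vertices}. The same reasoning applies to types $2$ and $3$. The main obstacle is excluding an extra $P_3$ through $u$ that uses edges inside $N(v)$, such as $u$ being adjacent to another $u''\in N(v)$. For this I would try \Cref{claim:H+G-H} applied to $H=G[\{u,u''\}\cup C_i]$, which has $4$ vertices and is neither a clique nor $P_3$. Its complement contains $v$ and is connected, so it must be a clique. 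A $C_4$ or $H_7$/$H_8$-type subgraph containing $v$ then finishes the argument as in Case (2.2) of \Cref{claim:Si+Ci}.
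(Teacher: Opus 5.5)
Your separation of the types via \Cref{claim:u+C_i}, and your handling of the type-$2$/type-$3$ overlap by an induced subgraph through $v$ containing an induced $C_4$, is essentially the paper's route. The paper uses the $C_4$ inside $F_1$--$F_3$. Your $F=G[\{v,u,u',a,b\}]$ is $H_6$, or $K_{2,3}$ with $\mathcal{E}_3^-=6\sqrt{6}$ when $u'b\in E(G)$; you overlooked that case, but it is harmless.

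The genuine gap is the bound on the number $c$ of components of $G-F$. Your justification (``$G-N[v]$ consists of cliques and $v$ is removed'') does not give it. $F$ contains $v$, and nothing established so far prevents $G-v$ from having many components, e.g.\ further neighbours of $v$ each carrying a private clique, each of which becomes its own component of $G-F$. Also, $10.03+(n-5-c)\ge n$ requires $c\le 5$, not $c\le 10$.

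What is missing is one more application of \Cref{claim:H+G-H} to a connected set avoiding $v$. Let $H_0=G[\{u,u'\}\cup Q_u\cup Q_{u'}]$, where $Q_x$ is the union of the cliques of $G-N[v]$ seen by $x$. It contains the induced path $b\,u\,a\,u'$ or the induced $4$-cycle $u\,a\,u'\,b$, so it is neither a clique nor $P_3$. Also $G-H_0$ is connected through $v$, since every remaining clique of $G-N[v]$ is seen by some vertex of $N(v)\setminus\{u,u'\}$. The $P_3$ alternative forces $n\le 8$, so $G-H_0$ is a clique, hence contained in $N[v]$. Then $G-F$ is a clique plus at most one extra vertex of $Q_{u'}$, so $c\le 2$ and $\mathcal{E}_3^-(G)>n$. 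The paper's bare assertion that $G-H$ has at most three components needs this same step.

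On uniqueness, in the sense the paper proves it (the typed path $G[\{u\}\cup C_i]$, $G[\{u\}\cup C_i\cup C_j]$ or $G[\{u,u'\}\cup C_i]$ is unique), the result follows once the overlap is excluded. A type-$3$ vertex then sees exactly one clique, a singleton $C_i$, and $S_i=\{u,u'\}$ by \Cref{claim:Si+Ci}. This also covers the paper's $F_2$ and $F_3$, since sharing a second singleton makes $u$ of type $2$.

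The stronger statement you try to add, that no $P_3$ of $G-v$ passes through an edge $uu''$ with $u''\in N(v)$, is not proved by the paper here; it is \Cref{claim:NonAdjacentP3}. Your sketch for it fails in three places:
\begin{itemize}
\item \Cref{claim:non-S_i-vertices} says nothing about neighbours of $u\in S_i$.
\item $G-G[\{u,u''\}\cup C_i]$ need not be connected, because of cliques seen only by $u''$.
\item For type $1$ one has $S_i=\{u\}$ by \Cref{claim:Si+Ci}, so there is no induced $C_4$ through $v$ and $u$. Meanwhile $G[\{v,u,u'',w_1,w_2\}]\cong H_2$ has $\mathcal{E}_3^-\approx 5.857<6$, which is too small when the rest is a clique.
\end{itemize}
For that step the paper needs a vertex $w'\in N(v)\setminus N(u)$, obtained from $d_G(u)\le\Delta$, together with the graphs $K_{1,3}$ and $H_{13}$--$H_{17}$.
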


\begin{proof*}
Let $u\in N(v)$ be a vertex. 

If $u$ is of type 1, then $u$ can only connected to one $C_i$. Otherwise, $u$ is connected to both $C_i$ and $C_j$ where $G[\{u\}\cup C_i]$ is isomorphic to $P_3$ (with $|C_i|=2$) and $|C_j|\geq 1$. But it contradicts~\Cref{claim:u+C_i}\ref{2-C_i+u}.

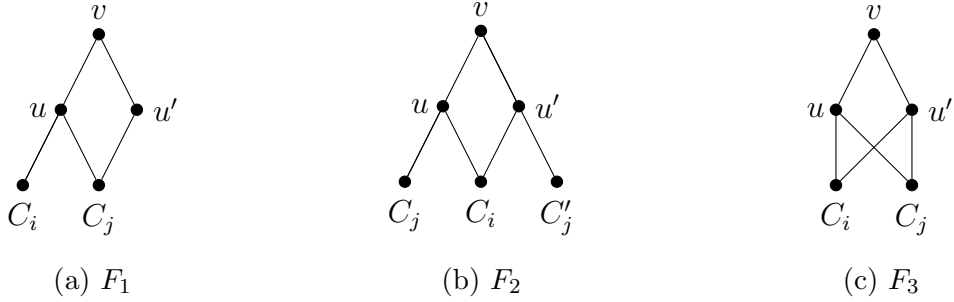
\begin{figure}[ht]
\centering
\begin{subfigure}[t]{.32\textwidth}
\centering
\begin{tikzpicture}[every node/.style={draw,circle,fill=black,inner sep=1.5pt}]
\node (v) at (0, 2) {};  
\node[draw=none, fill=none, above=2pt] at (v) {$v$};
\node (1) at (-1, 0) {};
\node (b) at (0, 0) {};
\node (3) at (-0.5, 1) {};
\node[draw=none, fill=none, left=2pt] at (3) {$u$};
\node (4) at (0.5, 1) {};
\draw (v) -- (1);
\draw (4) -- (v);
\draw (3) -- (b);
\draw (1) -- (3);
\draw (4) -- (b);
\node[draw=none, fill=none, right=2pt] at (4) {$u'$};
\node[draw=none, fill=none, below=2pt] at (1) {$C_i$};
\node[draw=none, fill=none, below=2pt] at (b) {$C_j$};
\end{tikzpicture}
\caption{$F_1$}
\label{fig:F1}
\end{subfigure}%
\begin{subfigure}[t]{.32\textwidth}
\centering
\begin{tikzpicture}[every node/.style={draw,circle,fill=black,inner sep=1.5pt}]
\node (v) at (0, 2) {};  
\node[draw=none, fill=none, above=2pt] at (v) {$v$};
\node (1) at (-1, 0) {};
\node (b) at (0, 0) {};
\node (3) at (-0.5, 1) {};
\node[draw=none, fill=none, left=2pt] at (3) {$u$};
\node (4) at (0.5, 1) {};
\node (c) at (1, 0) {};
\draw (v) -- (1);
\draw (4) -- (v);
\draw (v) -- (c);
\draw (3) -- (b);
\draw (1) -- (3);
\draw (4) -- (b);
\node[draw=none, fill=none, right=2pt] at (4) {$u'$};
\node[draw=none, fill=none, below=2pt] at (c) {$C_j'$};
\node[draw=none, fill=none, below=2pt] at (1) {$C_j$};
\node[draw=none, fill=none, below=2pt] at (b) {$C_i$};
\end{tikzpicture}
\caption{$F_2$}
\label{fig:F2}
\end{subfigure}
\begin{subfigure}[t]{.32\textwidth}
\centering
\begin{tikzpicture}[every node/.style={draw,circle,fill=black,inner sep=1.5pt}]
\node (v) at (0, 2) {};  
\node[draw=none, fill=none, above=2pt] at (v) {$v$};
\node (1) at (-0.5, 0) {};
\node (b) at (0.5, 0) {};
\node (3) at (-0.5, 1) {};
\node[draw=none, fill=none, left=1pt] at (3) {$u$};
\node (4) at (0.5, 1) {};
\draw (v) -- (3);
\draw (4) -- (v);
\draw (3) -- (b);
\draw (4)--(1) -- (3);
\draw (4) -- (b);
\node[draw=none, fill=none, right=2pt] at (4) {$u'$};
\node[draw=none, fill=none, below=2pt] at (1) {$C_i$};
\node[draw=none, fill=none, below=2pt] at (b) {$C_j$};
\end{tikzpicture}
\caption{$F_3$}
\label{fig:F3}
\end{subfigure}
\caption{Configurations $F_1, F_2$, and $F_3$}
\label{fig:F123}
\end{figure}

If $u$ is of type 2, then by~\Cref{claim:u+C_i}\ref{3-C_i+u} $u$ can only connected to $C_i$ and $C_j$ with $|C_i|=|C_j|=1$. Assume to the contrary that $u$ is also of type 3, i.e., $C_j$ is adjacent to another $u'\in N(v)$. Then $G$ contains $F_1$ as an induced subgraph, as shown in~\Cref{fig:F1}. 

If $u$ is of type 3, then by symmetry $u'$ is also of type 3. By~\Cref{claim:u+C_i}\ref{3-C_i+u} each of $u$ and $u'$ can be only connected to at most two cliques of size $1$. So $d_{G''}(u)\leq 2$ and $d_{G''}(u')\leq 2$ where $G''=G-N[v]$. Assume to the contrary that (1) there is $C_j$ with $j\neq i$ and $|C_j|=1$ such that $G[\{u, u'\}\cup C_j]$ is isomorphic to $P_3$; or (2) there are two $C_j$ and $C_j'$ with $|C_j|=|C_j'|=1$ such that each of $G[\{u\}\cup C_i\cup C_j]$ and $G[\{u'\}\cup C_i\cup C_j']$ is isomorphic to $P_3$ (i.e., $u$ and $u'$ are also of type $2$). In case (1), $G$ contains $F_2$ as an induced subgraph; and in case (2) $G$ contains $F_3$ as an induced subgraph. See~\Cref{fig:F2,fig:F3}.

For each $F_\ell$ with $\ell\in[3]$, it contains $C_4$ as an induced subgraph, that is to say, $G$ contains an induced subgraph $H$ isomorphic to $C_4$. Moreover, $G-H$ consists of at most three connected components. By~\Cref{table:3neg-energy}, $\mathcal{E}_3^{-}(H)=8$, and by~\Cref{claim:connected-components}, $\mathcal{E}_3^{-}(G-H)\geq (n-4)-3=n-7$. Altogether, by~\Cref{thm: additivity}, $\mathcal{E}_3^{-}(G)\geq\mathcal{E}_3^{-}(H)+\mathcal{E}_3^{-}(G-H)\geq 8+n-7>n$, a contradiction. 
\end{proof*}

Hence, for each vertex $u\in N(v)$ of type $i$ with $i\in [3]$, the corresponding path $G[\{u\}\cup C_i]$, $G[\{u\}\cup C_i\cup C_j]$, or $G[\{u,u'\}\cup C_i]$ is unique. 

\begin{claim}\label{claim:NonAdjacentP3}
Let $k\in [3]$ be an integer. Each vertex $u\in N(v)$ of type $k$ is not adjacent to any other $w\in N(v)$.
\end{claim}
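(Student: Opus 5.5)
Suppose, for a contradiction, that a vertex $u\in N(v)$ of type $k$ is adjacent to some $w\in N(v)$. We will find a small connected induced subgraph $H$ with large negative $3$-energy (typically containing an induced $C_4$ or a $5$-vertex graph from \Cref{table:3neg-energy}) such that $G-H$ has few components, and then combine \Cref{claim:connected-components} with \Cref{thm: additivity}. Throughout, let $P$ denote the unique induced $P_3$ in $G-v$ containing $u$ given by \Cref{claim:uniqueP3}. Write $P=uw_1w_2$ for type $1$, $P=w_1uw_2$ for type $2$, and $P=uw_1u'$ for type $3$, where the $w$'s lie in cliques of $G-N[v]$.

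\textbf{Types 1 and 2.} The vertex $w$ is not adjacent to the $C_i$ involved in $P$, since otherwise $w\in S_i$ and \Cref{claim:Si+Ci} would force $G[C_i\cup S_i]$ to be a clique, contradicting that $u$ induces a $P_3$ with $C_i$. So in type $1$, the graph $G[\{v,u,w,w_1,w_2\}]$ is the triangle $vuw$ with the path $uw_1w_2$ attached, i.e.\ $H_2$. In type $2$, the graph $G[\{v,u,w,w_1,w_2\}]$ is the triangle $vuw$ with two pendant vertices at $u$, which is $H_{17}$ minus a vertex. I would rather use the $6$-vertex graphs $H_{13}$ or $H_{17}$, adding one more neighbour of $v$ or of $w$ when one exists. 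Then I remove $H$ and count the components of $G-H$: the cliques $C_j$ detached from $H$, plus the remainder of $N(v)$ together with the rest of the graph, which is still connected through the remaining structure (other neighbours of $v$ that reach their own cliques). The difficulty is that deleting $v$ may shatter $G-H$ into many pieces. To avoid this, I would keep $v$ outside $H$ whenever possible. For instance, in type $1$ take $H=G[\{u,w,w_1,w_2\}]$, which is a $P_4$ with $\mathcal{E}_3^-\approx4.47$. Then $G-H$ is connected through $v$ apart from possibly the clique $C_i$, which is now empty. So $\mathcal{E}_3^-(G-H)\ge n-5$ by \Cref{claim:connected-components}, and since $4.47<5$ this falls short. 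Hence I would instead invoke \Cref{claim:H+G-H}. $H$ is connected, and $G-H$ is connected (it contains $v$ and every remaining vertex reaches $v$). $H=P_4$ is neither a clique nor $P_3$, so $G-H$ must be a clique or $P_3$. A clique is impossible, since it would make $v$ adjacent to all remaining vertices, including those of other $C_j$'s, which is impossible because $G-N[v]\neq\emptyset$ unless $C_i$ was the only clique. $P_3$ is excluded by $n\ge 11$. The residual case, where $C_i$ is the only clique, means $G-v$ consists of $P$ and $N(v)$, and then $G-v$ is handled directly via \Cref{claim:non-S_i-vertices} and \Cref{lem:P3+cliques}, or via the degree bound on $u$.

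Type 2 is treated in the same way with $H=G[\{w_1,u,w_2,w\}]$, a $K_{1,3}$ with $\mathcal{E}_3^-\approx5.196>5$. Here $G-H$ is connected via $v$, so \Cref{claim:connected-components} gives $\mathcal{E}_3^-(G-H)\ge n-5$, and adding yields $\mathcal{E}_3^-(G)>n$ directly. For type $3$, with $P=uw_1u'$ and $w\neq u'$, note that $w$ is not adjacent to $w_1$ by \Cref{claim:Si+Ci}\ref{2-C_i+S_i}, since $|S_i|=2$. If $w\ne u'$, then $G[\{w,u,w_1,u'\}]$ is connected and not a clique. It is a $P_4$ if $wu'\notin E$, and a $C_4$ if $wu'\in E$. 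The $C_4$ case gives $8+(n-4-c)$ and only needs $c\le 4$ components, which holds because $G-H$ stays connected through $v$. The $P_4$ case is handled via \Cref{claim:H+G-H} as in type $1$. Finally, if $uu'\in E$, then $P$ is not induced, so this case cannot occur.

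The main obstacle I expect is the bookkeeping that $G-H$ remains connected and is neither a clique nor $P_3$. This uses that $v$ is non-dominating, that $n\ge 11$, and the maximum-degree choice of $v$ (a neighbour adjacent to both $v$'s other neighbours and outside vertices would exceed $\Delta$). These small degenerate configurations must be closed off by the degree-counting trick used in \Cref{claim:u+C_i}.
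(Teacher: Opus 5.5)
\textbf{There is a genuine gap.} You repeatedly assert that $G-H$ is connected ``through $v$'', and this is false in general.

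\emph{The connectivity claim.} Your assertion ignores cliques of $G-N[v]$ that, among the vertices of $N(v)$, are seen only by $w$ (cliques $C_j$ with $S_j=\{w\}$). By \Cref{claim:u+C_i} and \Cref{claim:uniqueP3}, $u$ (and $u'$ in type 3) touches no cliques besides those in $P$, but nothing restricts $w$. If $w$ has such a private clique, deleting $H$ cuts it off. Then \Cref{claim:H+G-H} cannot be applied to your $H\cong P_4$, and counting via \Cref{claim:connected-components} is too weak:
\begin{itemize}
\item type 2: $5.196+(n-4-2)<n$;
\item type 1 and the $P_4$ subcase of type 3: $4.472+(n-4-2)<n$.
\end{itemize}
Your $C_4$ subcase of type 3 survives, since $w$ touches at most two cliques by \Cref{claim:u+C_i}, so $c\le 3$. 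That is the correct reason, not the one you give.

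The missing idea is the paper's: put every clique of $G-N[v]$ touching $w$ (the set $Q_w$) into $H$, i.e.\ take $H=G[Q_u\cup Q_w\cup\{w\}]$. Now $G-H$ really is connected. So \Cref{claim:H+G-H}, $n\ge 11$ and the degree bound force $G-H$ to be a clique containing $v$. The at most two leftover components are then paid for by configurations through $v$ and a neighbour $x\in Q_w$ of $w$: $H_{15}$/$H_{16}$ in type 1 and $H_{17}$ in type 2, all with $\mathcal{E}_3^{-}>9$. Type 3 is finished by the induced $C_4$ on $\{v,u,w_1,u'\}$.

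\emph{The residual case is not closed.} Suppose $C_i$ is the only clique of $G-N[v]$ and $G-H$ is a clique. The component of $G-v$ containing $u$ contains the induced $P_4$ $w\,u\,w_1w_2$. So $G-v$ is not a disjoint union of $P_3$'s and cliques, and \Cref{lem:P3+cliques} does not apply. Indeed, if neither $u$ nor $w$ has further neighbours in $N(v)$, no single vertex deletion leaves only $P_3$'s and cliques.

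The degree bound alone only produces some $w'\in N(v)$ nonadjacent to $u$. You still need the $6$-vertex induced subgraph on $\{v,u,w,w',w_1,w_2\}$. It is $H_{13}$ or $H_{14}$, so $\mathcal{E}_3^{-}>7$, and deleting it leaves a clique. In type 3 the analogous case follows at once from the $C_4$ through $v$, but you need to say so.

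\emph{A smaller point.} Your reason why $w$ misses $C_i$ works for type 1 but not for type 2. There $P$ spans two singleton cliques, so $G[C_i\cup S_i]$ being a clique (e.g.\ the triangle $u w w_1$) contradicts nothing. If $w\sim w_1$, your $H$ is $H_1$ rather than $K_{1,3}$, and this subcase needs its own treatment. The paper's write-up tacitly skips it as well.
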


\begin{proof*}
Let $u$ be a vertex of type $k$ for $k\in [3]$. Assume to the contrary that there is $w\in N(v)$ such that $uw\in E(G)$. Let $Q_w$ denote the union of all cliques $C$ of $G-N[v]$ such that $w$ is adjacent to some vertex in $C$. Note that $Q_w$ might be an empty set. Let $$
    Q_u =
        \begin{cases}
            G[\{u\}\cup C_i], & \text{if $u$ is of type 1};\\
            G[\{u\}\cup C_i\cup C_j], & \text{if $u$ is of type 2};\\
            G[\{u,u'\}\cup C_i], & \text{if $u$ is of type 3}.
        \end{cases}
    $$ So $Q_u$ is isomorphic to $P_3$. Let $H=G[Q_u\cup Q_w \cup \{w\}]$. Note that $H$ is connected, which is neither a clique nor isomorphic to $P_3$. Moreover, $G-H$ is also connected, as it contains the vertex $v$. It follows from~\Cref{claim:H+G-H} that $G-H$ is either isomorphic to $P_3$ or a clique. 

    We first claim that $G-H$ is not isomorphic to $P_3$. Assume not. Since $v\in V(G-H)$, $\Delta=d_G(v)\leq 4$. If $|Q_w|\geq 3$, then by~\Cref{claim:u+C_i}\ref{1-C_i+u} the vertex $w$ is adjacent to all the vertices of $Q_w$, which contradicts the fact that $|Q_w|+2\leq d_G(w)\leq \Delta=4 $; Otherwise, $|Q_w|\leq 2$, and so $|G|=|H|+|G-H|\leq 6+3=9$, a contradiction.  

    In what follows, we assume that $G-H$ is a clique. We consider three cases based on the types of $u$.

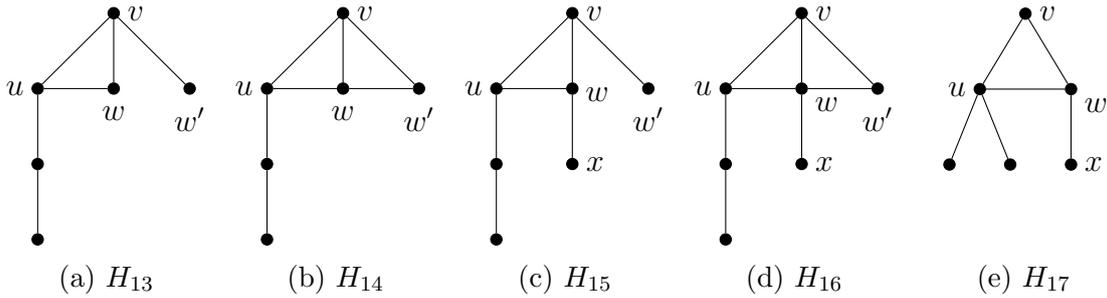
\begin{figure}[htbp]
\centering 
\begin{subfigure}[t]{.18\textwidth}
\centering  
\begin{tikzpicture}[every node/.style={draw,circle,fill=black,inner sep=1.5pt}]
\node (v)  at (0,2) {};
\node (a)  at (-1,1) {};
\node (b)  at (1,1) {};
\node (c)  at (0,1) {};
\node (d1) at (-1,0) {};
\node (d2) at (-1,-1) {};
\draw (v)--(a)--(c)--(v);
\draw (v)--(b);
\draw (a)--(d1)--(d2);
\node[draw=none, fill=none, right=2pt] at (v) {$v$};
\node[draw=none, fill=none, left=2pt] at (a) {$u$};
\node[draw=none, fill=none, below=2pt] at (c) {$w$};
\node[draw=none, fill=none, below=2pt] at (b) {$w'$};
\end{tikzpicture}
\caption{$H_{13}$}
\label{fig:newH13} 
\end{subfigure}
\begin{subfigure}[t]{.18\textwidth}
\centering  
\begin{tikzpicture}[every node/.style={draw,circle,fill=black,inner sep=1.5pt}]
\node (v)  at (0,2) {};
\node (a)  at (-1,1) {};
\node (b)  at (1,1) {};
\node (c)  at (0,1) {};
\node (d1) at (-1,0) {};
\node (d2) at (-1,-1) {};
\draw (v)--(a)--(c)--(v);
\draw (v)--(b)--(c);
\draw (a)--(d1)--(d2);
\node[draw=none, fill=none, right=2pt] at (v) {$v$};
\node[draw=none, fill=none, left=2pt] at (a) {$u$};
\node[draw=none, fill=none, below=2pt] at (c) {$w$};
\node[draw=none, fill=none, below=2pt] at (b) {$w'$};
\end{tikzpicture}
\caption{$H_{14}$}
\label{fig:newH14} 
\end{subfigure}
\begin{subfigure}[t]{.18\textwidth}
\centering  
\begin{tikzpicture}[every node/.style={draw,circle,fill=black,inner sep=1.5pt}]
\node (v)  at (0,2) {};
\node (a)  at (-1,1) {};
\node (b)  at (0,1) {};
\node (c)  at (1,1) {};
\node (d1) at (-1,0) {};
\node (d2) at (-1,-1) {};
\node (e)  at (0,0) {};
\draw (v)--(a)--(b)--(v);
\draw (v)--(c);
\draw (a)--(d1)--(d2);
\draw (b)--(e);
\node[draw=none, fill=none, right=2pt] at (v) {$v$};
\node[draw=none, fill=none, left=2pt] at (a) {$u$};
\node[draw=none, fill=none, below=2pt,right=2pt] at (b) {$w$};
\node[draw=none, fill=none, below=2pt] at (c) {$w'$};
\node[draw=none, fill=none, right=2pt] at (e) {$x$};
\end{tikzpicture}
\caption{$H_{15}$}
\label{fig:H16} 
\end{subfigure}
\begin{subfigure}[t]{.18\textwidth}
\centering  
\begin{tikzpicture}[every node/.style={draw,circle,fill=black,inner sep=1.5pt}]
\node (v)  at (0,2) {};
\node (a)  at (-1,1) {};
\node (b)  at (1,1) {};
\node (c)  at (0,1) {};
\node (d1) at (-1,0) {};
\node (d2) at (-1,-1) {};
\node (e)  at (0,0) {};
\draw (v)--(a)--(c)--(v);
\draw (v)--(b)--(c);
\draw (a)--(d1)--(d2);
\draw (c)--(e);
\node[draw=none, fill=none, right=2pt] at (v) {$v$};
\node[draw=none, fill=none, left=2pt] at (a) {$u$};
\node[draw=none, fill=none, below=6pt, right=2pt] at (c) {$w$};
\node[draw=none, fill=none, below=2pt] at (b) {$w'$};
\node[draw=none, fill=none, right=2pt] at (e) {$x$};
\end{tikzpicture}
\caption{$H_{16}$}
\label{fig:H17} 
\end{subfigure}
\begin{subfigure}[t]{.18\textwidth}
\centering
\begin{tikzpicture}[every node/.style={draw,circle,fill=black,inner sep=1.5pt}]
\node (v)  at (0,2) {};
\node (a)  at (-0.6,1) {};
\node (b)  at (0.6,1) {};
\node (a1) at (-1,0) {};
\node (a2) at (-0.2,0) {};
\node (b1) at (0.6,0) {};
\draw (v)--(a);
\draw (v)--(b);
\draw (a)--(b);
\draw (a2)--(a)--(a1);
\draw (b)--(b1);
\node[draw=none, fill=none, right=2pt] at (v) {$v$};
\node[draw=none, fill=none, left=2pt] at (a) {$u$};
\node[draw=none, fill=none, below=6pt,right=2pt] at (b) {$w$};
\node[draw=none, fill=none, right=2pt] at (b1) {$x$};
\node[draw=none, fill=none, left=2pt] at (a1) {};
\node[draw=none, fill=none, right=2pt] at (a2) {};
\node[draw=none, fill=none, left=2pt] at (-1,-1) {};
\end{tikzpicture}
\caption{$H_{17}$}
\label{fig:H18}
\end{subfigure}
\caption{The configurations in~\Cref{claim:NonAdjacentP3}}
\label{fig:uNOu'}
\end{figure}

\noindent
(1) \emph{Assume that $u$ is of type 1}. Note that $|G-H|\geq 2$, as otherwise $\Delta\leq 3$, then $|Q_w|\leq 1$ and $|G|\leq|Q_u|+|Q_w|+|\{w\}|+2\leq 7$, a contradiction. Moreover, there exists a vertex $w'\in N(v)$ not adjacent to $u$, as otherwise we have $d_G(u)=d_G(v)+1>\Delta$, a contradiction. In this case, $V(G-H)\subset N[v]$.

If $|Q_w|=0$, then we consider the induced subgraph $H':=G[Q_u\cup \{v,w,w'\}]$. Note that $H'$ is isomorphic to $H_{13}$ if $w$ is not adjacent to $w'$, and is isomorphic to $H_{14}$ otherwise. See~\Cref{fig:newH13} and \Cref{fig:newH14}. By~\Cref{table:3neg-energy}, $\mathcal{E}_3^{-}(H')\geq \min\{8.068, 8.499\}>7$. Note that $G-H'$ is connected. By~\Cref{thm: additivity}, $\mathcal{E}_3^{-}(G)\geq\mathcal{E}_3^{-}(H')+\mathcal{E}_3^{-}(G-H')> 7+(n-6-1)=n$, a contradiction.

If $|Q_w|\geq 1$, then let $x\in V(Q_w)\cap N(w)$. We consider the induced subgraph $H':=G[Q_u\cup \{v,w,w',x\}]$. Note that $H'$ is isomorphic to $H_{15}$ if $w$ is not adjacent to $w'$, and is isomorphic to $H_{16}$ otherwise. See~\Cref{fig:H16} and \Cref{fig:H17}. 
By~\Cref{table:3neg-energy}, $\mathcal{E}_3^{-}(H')\geq \min \{10.679,~11.074\}>9$. Moreover, since $G-H$ is a clique, $G-H'$ has at most two connected components, and thus by~\Cref{claim:connected-components}, $\mathcal{E}_3^{-}(G-H')\geq (n-7)-2=n-9$. By~\Cref{thm: additivity}, $\mathcal{E}_3^{-}(G)\geq\mathcal{E}_3^{-}(H')+\mathcal{E}_3^{-}(G-H')> 9+(n-9)=n$, a contradiction.  

\medskip
\noindent
(2) \emph{Assume that $u$ is of type 2}.

If $|Q_w|=0$, then $G$ contains $H':=G[\{w\}\cup Q_u]$ as an induced subgraph, which is isomorphic to $K_{1,3}$; Moreover, $G-H'$ is connected.  By~\Cref{table:3neg-energy}, $\mathcal{E}_3^{-}(H')\geq 5$. By~\Cref{thm: additivity}, $\mathcal{E}_3^{-}(G)\geq\mathcal{E}_3^{-}(H')+\mathcal{E}_3^{-}(G-H')> 5+(n-4-1)=n$, a contradiction.
       
If $|Q_w|\geq 1$, then let $x\in V(Q_w)\cap N(w)$. So $H':=G[\{v,w,x\}\cup Q_u]$ is an induced subgraph of $G$, which is isomorphic to $H_{17}$ (shown in~\Cref{fig:H18}). By~\Cref{table:3neg-energy}, $\mathcal{E}_3^{-}(H')\approx 9.136\geq 9$. Moreover, since $G-H$ is a clique, $G-H'$ has at most two connected components, and thus by~\Cref{claim:connected-components}, $\mathcal{E}_3^{-}(G-H')\geq (n-6)-2=n-8$. By~\Cref{thm: additivity}, $\mathcal{E}_3^{-}(G)\geq\mathcal{E}_3^{-}(H')+\mathcal{E}_3^{-}(G-H')\geq 9+n-8=n+1$, a contradiction. 

\medskip
\noindent
(3) \emph{Assume that $u$ is of type 3}. So $G$ contains $H':=G[\{v\}\cup Q_u]$ as an induced subgraph, which is isomorphic to $C_4$. Moreover, since $G-H$ is a clique, $G-H'$ has at most two connected components. Hence, by~\Cref{thm: additivity}, $\mathcal{E}_3^{-}(G)\geq\mathcal{E}_3^{-}(H')+\mathcal{E}_3^{-}(G-H')> 8+(n-4-2)>n$, a contradiction. 
\end{proof*}

\medskip
It follows from \Cref{claim:uniqueP3} and \Cref{claim:NonAdjacentP3} that in $G-v$, any two $P_3$'s each containing a vertex of some type $k \in [3]$ are vertex-disjoint; Moreover, these $P_3$'s are not adjacent to any other connected components in $G-v$.

The next claim demonstrates that for any vertex $u \in N(v)$, if $u\in S_i$ and $Q_u=G[S_i\cup C_i]$ is a clique, then $u$ is not adjacent to every other vertex $u'\in N(v)\setminus S_i$.

\begin{claim}\label{claim:last claim}
Assume that $K=G[S_i\cup C_i]$ is a clique in $G-v$ with $S_i=\{u_{i_1}, \ldots, u_{i_t}\}$.  Then $u_{i_j}$ is not adjacent to any of $u'\in N(v)\setminus S_i$, unless $|C_i|=1$, $|S_i|=1$, and $u'$ is an isolated vertex in $G-(V(K)\cup \{v\})$.  
\end{claim}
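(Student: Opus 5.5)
We argue by contradiction, in the same way as the previous claims. Suppose some $u:=u_{i_j}\in S_i$ is adjacent to a vertex $u'\in N(v)\setminus S_i$. The first observation is a degree count using the maximality of $d_G(v)=\Delta$. The vertex $u$ is adjacent to $v$, to $u'$, to every vertex of $C_i$ (since $K$ is a clique) and to every other vertex of $S_i$. Hence
\[
d_G(u)\ge 1+1+|C_i|+(|S_i|-1),
\]
and $u$ has at least $|C_i|$ neighbours outside $N[v]$. Since $d_G(u)\le\Delta$, there must be at least $|C_i|+1$ vertices of $N(v)$ that are not adjacent to $u$ (the extra one accounts for $u$ itself). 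We will use such non-neighbours to build small induced configurations. Note also that by \Cref{claim:NonAdjacentP3}, $u'$ is not of any type $k\in[3]$. So $u'$ lies either in $N(v)\setminus\bigcup_i S_i$ or in some $S_j$ with $j\ne i$ and $G[S_j\cup C_j]$ a clique.

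Next, set $H:=G[V(K)\cup\{u'\}\cup Q_{u'}]$, where $Q_{u'}$ is the union of the cliques $C$ of $G-N[v]$ that $u'$ sees. Then $H$ is connected and, since $u'$ has no neighbour in $C_i$, it is not a clique. Unless $|H|=3$, it is also not a $P_3$. The graph $G-H$ contains $v$ and is therefore connected. By \Cref{claim:H+G-H}, $G-H$ is a clique or a $P_3$. The $P_3$ case is excluded by the usual size count: it forces $\Delta\le 4$, and together with \Cref{claim:u+C_i} this bounds $|G|\le 10$. So $G-H$ is a clique. Its vertices all lie in $N[v]$ apart from possibly other cliques $C$, but since $G-H$ is complete and contains $v$, in fact $V(G-H)\subset N[v]$.

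Now we build the configuration. Take $c\in C_i$ and a vertex $w'\in N(v)$ not adjacent to $u$, which exists by the degree count.
\begin{itemize}
\item If $|C_i|\ge 2$, or $|S_i|\ge 2$, or $u'$ has a neighbour $x$ in some clique of $G-N[v]$ or in $N(v)$ other than $u,v$, we take an induced subgraph $H'$ on about $5$--$7$ vertices, such as $G[\{v,u,u',c,w'\}]$ possibly with $x$ or a second vertex of $K$ added. We expect these to reproduce graphs already in \Cref{table:3neg-energy} ($H_{13}$--$H_{16}$, $H_9$, $H_{10}$, $C_4$-containing graphs). Each has $\mathcal{E}_3^-(H')\ge |H'|+c(G-H')$.
\item Since $G-H$ is a clique, $G-H'$ has at most two or three components, each either a clique or $u'$'s remaining clique. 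Then \Cref{claim:connected-components} and \Cref{thm: additivity} give $\mathcal{E}_3^-(G)\ge n$, a contradiction.
\end{itemize}

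The residual case is $|C_i|=|S_i|=1$ with $u'$ having no neighbours besides $u$ and $v$. This is exactly the allowed exception: $u'$ is isolated in $G-(V(K)\cup\{v\})$.

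The main obstacle is the case bookkeeping in this last step. We must check, for each combination (whether $u'$ is in some $S_j$, whether $w'$ is adjacent to $u'$, and whether $|C_i|\ge2$), that the chosen $H'$ has large enough $\mathcal{E}_3^-$ relative to the number of components of $G-H'$. If a configuration is missing from \Cref{table:3neg-energy}, we first try replacing it by an induced $C_4$ such as $v\,u'\,x\,\cdots$ or $K_{1,3}$ centred at $u$ (on $u',c,w'$), whose energies $8$ and $3\sqrt3$ usually suffice.
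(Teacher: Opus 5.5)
Your degree count (which yields some $w'\in N(v)\setminus\{u\}$ with $uw'\notin E(G)$) and your classification of $u'$ are correct. However, the proposal stops where the actual work begins. The configuration step is only an expectation, and the part you do specify does not suffice.

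Since $u',w'\notin S_i$, your basic graph $H'=G[\{v,u,u',c,w'\}]$ is forced to be $H_4$ (if $u'w'\notin E(G)$) or $H_8$ (if $u'w'\in E(G)$). So $\mathcal{E}_3^-(H')<7=|H'|+2$, and $H'$ helps only if $G-H'$ is connected. But $G-H'$ may consist of three cliques, namely $(G-H)-\{v,w'\}$, $K-\{u,c\}$ and $Q_{u'}$.

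This occurs in a case the claim must exclude: $u'\in S_j$ with $S_i=\{u\}$, $S_j=\{u'\}$, and $u,u'$ having no neighbours in $G-H$ other than $v$. Then $G$ is the triangle $uu'v$ with three pairwise non-adjacent cliques hung at its vertices. This is a block graph with no induced $C_4$, $K_{1,3}$, diamond or $P_5$. Consequently:
\begin{itemize}
\item none of $H_9,H_{10},H_{13},\dots,H_{16}$ occurs in $G$, since each contains a diamond or an induced $P_5$;
\item your $H'\cong H_4$ gives only $\mathcal{E}_3^-(G)\ge n-1.55$;
\item the fallback ``$K_{1,3}$ centred at $u$ on $\{u',c,w'\}$'' cannot exist, since $w'$ was chosen non-adjacent to $u$.
\end{itemize}
The paper handles this case by recognising the whole graph: \Cref{lem:ThreeCliques}, built on $H_4$ and $H_{12}$ with a global component count. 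It closes the other rigid endings the same way: \Cref{lem:UnionOfCliques} when $u'$ sees all of $N(v)\setminus\{u\}$ (so $G-u$ is two cliques), and \Cref{lem:K_(n-2)+K_2} when $|C_i|\ge 3$. It reaches these by splitting on whether $u'$ sees a clique of $G-N[v]$. In Case~1 it uses $H=G[C_i\cup C_j\cup\{u,u'\}]$ and a degree comparison forcing $|C_j|=1$ before finding $H_4$ or $H_5$. In Case~2 it first proves $S_i=\{u\}$ (via $H_{10}$) and then finds a vertex of $N(v)$ non-adjacent to $u'$. None of this is present in your plan.

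Your set-up also contains two errors. First, your $H$ contains all of $S_i$, so the case $G-H\cong P_3$ only gives $\Delta\le |S_i|+3$, and nothing bounds $|S_i|$. For example, take $|C_i|=2$, $K\cong K_{s+2}$, $u'$ adjacent only to $u$ and $v$, and $N(v)=S_i\cup\{u',a,b\}$ with $a,b$ pendant at $v$. Then $G-H\cong P_3$ while $\Delta=s+3$ and $n=s+6$. The paper keeps $S_i\setminus\{u\}$ out of $H$ precisely so that $N(v)\setminus V(G-H)\subseteq\{u,u'\}$, which gives $\Delta\le 4$.

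Second, suppose $|C_i|=|S_i|=1$ and $Q_{u'}=\emptyset$, but $u'$ has a neighbour $x\in N(v)\setminus\{u\}$. You are not in the exceptional case, yet $H\cong P_3$. So \Cref{claim:H+G-H} gives nothing, and your later ``since $G-H$ is a clique'' is unjustified. You must first enlarge $H$ (e.g.\ by $x$ and the cliques $x$ sees) and redo the argument. The paper's Case~2 is equally quick here: $G[\{u,u'\}\cup C_i]$ is a $P_3$ whenever $|C_i|=1$, not only in the exceptional case. So this subcase needs such an extra argument in any complete proof.
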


\begin{proof*}
Assume to the contrary that $u\in S_i$ is adjacent to some other $u'\in N(v)\setminus S_i$ and the exceptional case does not hold. By~\Cref{claim:NonAdjacentP3}, $u'$ can not be a vertex of type $i$ for $i\in [3]$. Thus we consider the following two possibilities.

\smallskip
\noindent
{\bf Case 1}. \emph{The vertex $u'$ is adjacent to some vertex of some clique $C_j$ ($j\neq i$) in $G-N[v]$.}
\smallskip

Let $F=G[S_j\cup C_j]$ such that $u'\in S_j$. Note that by~\Cref{claim:Si+Ci} and \Cref{claim:NonAdjacentP3} $F$ is also a clique in $G-v$. Recalling that $u\in S_i$ and $u'\in S_j$, 
let $a \in C_i$ and $b \in C_j$ denote the neighbors of $u$ and $u'$, respectively. Then $ auu'b $ is an induced path of length $4$ of $G$. We consider the induced connected subgraph $H:=G[\ C_i\cup C_j\cup \{u,u'\}]$. Since $H$ has at least four vertices and contains $auu'b$ as an induced subgraph, it is neither a clique nor isomorphic to $P_3$. Moreover, since $G-H$ contains the vertex $v$, it is connected. By~\Cref{claim:H+G-H}, $G-H$ is isomorphic to either $P_3$ or a clique.

If $G-H$ is isomorphic to $P_3$, then $d_G(v)=\Delta \leq 4$. Since $u$ is adjacent to all vertices of $C_i$ and $u'$ is adjacent to all vertices of $C_j$, $|C_i|\leq 2$ and $|C_j|\leq 2$. Hence, $|G|=|G-H|+|C_i|+|C_j|+2\leq 3+2+2+2\leq 9$, a contradiction.

If $G-H$ is a clique, then $V(G-H)=N[v]\setminus \{u,u'\}$. Recall that by~\Cref{claim:u+C_i}\ref{1-C_i+u} and~\Cref{claim:NonAdjacentP3}, each of $G[\{u\}\cup C_i]$ and $G[\{u'\}\cup C_j]$ is a clique. Since $G-N[v]$ is a disjoint union of cliques $G[C_1], \ldots, G[C_k]$ and there is no edge connecting any vertex of $C_i$ and $C_j$ for any distinct $i,j\in [k]$, $G[\{u\}\cup C_i]$ and $G[\{u'\}\cup C_j]$ are pairwise vertex-disjoint. We consider two subcases:
\begin{itemize}
    \item Assume that there exists $u''\in G-H$ that adjacent to some of $C_i$ or $C_j$, say $C_j$. 
    
    By~\Cref{claim:Si+Ci}\ref{1-C_i+S_i}, $u''$ is adjacent to all the vertices of $C_j$. In this case, $u''$ is not adjacent to any vertex of $C_i$ (in particular, $u\in C_i$), as otherwise by~\Cref{claim:Si+Ci} $G[\{u,u',u''\}\cup C_i\cup C_j]$ is a clique which is not possible. Moreover, $u''$ must be adjacent to $u'$, as otherwise, by~\Cref{claim:Si+Ci}, it must hold that $|C_j|=1$ and $u'$ is of type $3$ and adjacent to $u$, which contradicts~\Cref{claim:NonAdjacentP3}. Noting that $G-H$ is a clique, $d_G(v)=|G\setminus \{v\}|-|C_i|-|C_j|$, and $d_G(u'')\geq |G\setminus \{u'',u\}|-|C_i|$, we have that $\Delta=d_G(v)\geq d_G(u'')$, which implies that $|C_j|\leq 1$. Hence, $C_j=\{b\}$.

    Then if $|C_i|=1$, then $H':=G[\{v,u,u',a,b\}]$ is an induced subgraph of $G$ which is isomorphic to $H_4$. If $|C_i|\geq 2$, then $H'':=G[\{v,u,u',a, a',b\}]$ is an induced subgraph of $G$ which is isomorphic to $H_5$, where $a'$ is a vertex of $C_i\setminus \{a\}$. See~\Cref{fig:H4H5inCase1inClaim3.9}. 
\begin{figure}[ht]
\centering
\begin{subfigure}[t]{.45\textwidth}
\centering
\begin{tikzpicture}[every node/.style={draw,circle,fill=black,inner sep=1.5pt}]
\begin{scope}[xshift=0cm]
\node (v) at (-0.5, 2) {};  
\node[draw=none, fill=none, above=2pt] at (v) {$v$};
\node (1) at (-1, 1) {};
\node[draw=none, fill=none, left=2pt] at (1) {$u$};
\node (b) at (0, 1) {};
\node (3) at (-1, 0) {};
\node[draw=none, fill=none, left=2pt] at (3) {$a$};
\node (c) at (0, 0) {};
\draw (v) -- (1);
\draw (v) -- (b);
\draw (b) -- (c);
\draw (1) -- (b);
\draw (1) -- (3);
\node[draw=none, fill=none, right=2pt] at (b) {$u'$};
\node[draw=none, fill=none, right=2pt] at (c) {$b$};
\end{scope}
\end{tikzpicture}
\caption{$G[\{v,u,u',a,b\}]\cong H_4$}
\label{fig:H4inCase1}
\end{subfigure}
\begin{subfigure}[t]{.45\textwidth}
\centering
\begin{tikzpicture}[every node/.style={draw,circle,fill=black,inner sep=1.5pt}]
\begin{scope}
\node (v) at (-0.5, 2) {}; 
\node[draw=none, fill=none, above=2pt] at (v) {$v$};
\node (a) at (-1, 1) {};
\node[draw=none, fill=none, left=2pt] at (a) {$u$};
\node (a1) at (-1.5, 0) {};
\node (a2) at (-0.5, 0) {};
\node (b) at (0, 1) {};
\node (c) at (0, 0) {};
\node[draw=none, fill=none, right=2pt] at (b) {$u'$};
\node[draw=none, fill=none, right=2pt] at (c) {$b$};
\node[draw=none, fill=none, left=2pt] at (a1) {$a$};
\node[draw=none, fill=none, right=0.2pt] at (a2) {$a'$};
\draw (v) -- (a);
\draw (v) -- (b)--(a);
\draw (b) -- (c);
\draw (a) -- (a1);
\draw (a) -- (a2);
\draw (a1) -- (a2);
\end{scope}
\end{tikzpicture}
\caption{$G[\{v,u,u',a, a',b\}]\cong H_{5}$}
\label{fig:H5inCase1}
\end{subfigure}
\caption{Configurations in Case 1 of \Cref{claim:last claim}}
\label{fig:H4H5inCase1inClaim3.9}
\end{figure}
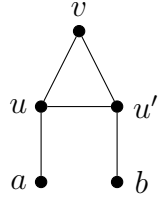
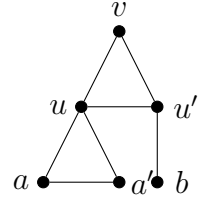
By~\Cref{table:3neg-energy}, we have that $\mathcal{E}_3^{-}(H')\geq 6=|H'|+1$ and $\mathcal{E}_3^{-}(H'')\geq 8=|H''|+2$. Note that $G-H'$ is connected if $|C_i|=1$, and $G-H''$ has at most two connected components if $|C_i|\geq 2$. By~\Cref{thm: additivity}, either $\mathcal{E}_3^{-}(G)\geq\mathcal{E}_3^{-}(H')+\mathcal{E}_3^{-}(G-H')\geq 6+\big( (n-5)-1\big)=n$, or $\mathcal{E}_3^{-}(G)\geq\mathcal{E}_3^{-}(H'')+\mathcal{E}_3^{-}(G-H'')\geq 8+\big( (n-6)-2\big)=n$, each leading to a contradiction.   
    
    \item Assume that there is no vertex of $G-H$ adjacent to any vertex in $C_i$ or $C_j$. Since $uu' \in E(G)$ and $u,u' \in N(v)$, the graph $G$ consists of the triangle $uu'v$ together with three cliques $G[\{u\}\cup C_i]$, $G[\{u'\}\cup C_j]$, and $G-H$, each containing one of $u$, $u'$, and $v$, respectively. These cliques are pairwise disjoint and have no connections between them, except for the edges $uu'$, $uv$, and $u'v$. By~\Cref{lem:ThreeCliques}, $\mathcal{E}_3^{-}(G)\geq n$.
\end{itemize}

\smallskip
\noindent
{\bf Case 2}. \emph{The vertex $u'$ is not adjacent to any vertex of any $C_i$ in $G-N[v]$.}


\smallskip

Since $u'$ is not connected to $C_i$, $H:=G[\{u,u'\}\cup C_i ]$ is not a clique. Note that $H$ is isomorphic to $P_3$ only if $|C_i|=1$, $|S_i|=1$, and $u'$ is an isolated vertex in $G-v$, which is the exceptional case. So we know that $H$ is not isomorphic to $P_3$ as well. Since $u'$ is not adjacent to any $C_j$, we know that $G-H$ is connected. Hence, by~\Cref{claim:H+G-H}, $G-H$ is either a clique or isomorphic to $P_3$. If $G-H$ is isomorphic to $P_3$, then $\Delta=d_G(v)\leq 4$. Since $u$ is adjacent to $u',v$ and all vertices of $S_i$ and $C_i$, we have that $|S_i\setminus\{u\}|+|C_i|+|\{u',v\}|\leq d(u)\leq 4$. Thus $|S_i\cup C_i|\leq 3$. Hence, $|G|=|P_3|+|S_i\cup C_i|+|N[u']|\leq 3+3+3=9$, a contradiction. From now on, we assume that $G-H$ is a clique. 
\begin{figure}[ht]
\centering
\begin{subfigure}[t]{.4\textwidth}
\centering
\begin{tikzpicture}[every node/.style={draw,circle,fill=black,inner sep=1.5pt}]
\begin{scope}[xshift=5cm]
\node (1) at (0,0) {};
\node (2) at (1,0) {};
\node (3) at (0.5,-1) {};
\node (5) at (2,0) {};
\node (4) at (1,1) {};
\draw (1) -- (2);
\draw(1)--(3) -- (2);
\draw (1) -- (4);
\draw (2) -- (4);
\draw (2) -- (5)--(4);
\node[draw=none, fill=none, below=5pt] at (2) {$u$};
\node[draw=none, fill=none, above=2pt] at (4) {$v$};
\node[draw=none, fill=none, below=2pt] at (1) {$u_1$};
\node[draw=none, fill=none, below=2pt] at (3) {$C_i$};
\node[draw=none, fill=none, below=2pt] at (5) {$u'$};
\end{scope}
\end{tikzpicture}
\caption{$G[\{u_1,u,u',v\}\cup C_i]\cong H_{10}$}
\label{fig:H10inCase2ofClaim3.9}
\end{subfigure}
\begin{subfigure}[t]{.5\textwidth}
\centering
\begin{tikzpicture}[every node/.style={draw,circle,fill=black,inner sep=1.5pt}]
\begin{scope}[xshift=0cm]
\node (v) at (0, 2) {};  
\node[draw=none, fill=none, above=2pt] at (v) {$v$};
\node (1) at (-1, 1) {};
\node[draw=none, fill=none, left=2pt] at (1) {$u$};
\node (b) at (0, 1) {};
\node (3) at (-1, 0) {};
\node (c) at (1, 1) {};
\draw (v) -- (1);
\draw (v) -- (b);
\draw (v) -- (c);
\draw (1) -- (b);
\draw (1) -- (3);
\node[draw=none, fill=none, below=2pt] at (b) {$u'$};
\node[draw=none, fill=none, below=2pt] at (c) {$c$};
\end{scope}
\begin{scope}[xshift=4cm]
\node (v) at (0, 2) {}; 
\node[draw=none, fill=none, above=2pt] at (v) {$v$};
\node (a) at (-1, 1) {};
\node[draw=none, fill=none, left=2pt] at (a) {$u$};
\node (a1) at (-1.5, 0) {};
\node (a2) at (-0.5, 0) {};
\node (b) at (0, 1) {};
\node[draw=none, fill=none, below=2pt] at (b) {$u'$};
\node (c) at (1, 1) {};
\node[draw=none, fill=none, below=2pt] at (c) {$c$};
\draw (v) -- (a);
\draw (v) -- (b)--(a);
\draw (v) -- (c);
\draw (a) -- (a1);
\draw (a) -- (a2);
\draw (a1) -- (a2);
\node[draw=none, fill=none, below=2pt] at (0.5, -0.5) {};
\end{scope}
\end{tikzpicture}
\caption{$G[\{u,u',c,v\}\cup C_i]\cong H_{4}~\text{or}~H_{5}$}
\label{fig:H4H5inClaim3.9}
\end{subfigure}
\caption{Configurations in Case 2 of \Cref{claim:last claim}}
\label{fig:LastCase}
\end{figure}

We first claim that $S_i=\{u\}$. Otherwise, assume that $S_i\setminus \{u\}\neq\emptyset$. If there is a vertex $u^*\in S_i\setminus \{u\}$ is adjacent to $u'$, then $u^*$ has the same degree as $v$ in $G-G'$, but $u$ is connected to $C_i$ in $G$, so  $d_G(u^*)>d_G(v)=\Delta$, a contradiction. So none of $ S_i\setminus \{u\}$ is adjacent to $u'$. In this case, $|C_i|=1$, as otherwise, $d_G(u^*)>d_G(v)=\Delta$, again a contradiction. Let $u_1\in S_i\setminus \{u\}$ be an arbitrary vertex. So $u_1$ is not adjacent to $u'$. Now the induced subgraph $H':=G[\{u_1,u,u',v\}\cup C_i]$ is isomorphic to $H_{10}$ as shown in \Cref{fig:H10inCase2ofClaim3.9}. By~\Cref{table:3neg-energy}, we have $\mathcal{E}_3^{-}(H')\approx 7.53>7$. Note that $G-H'$ is a clique. Hence, by~\Cref{thm: additivity}, $
\mathcal{E}_3^{-}(G) \geq \mathcal{E}_3^{-}(H') + \mathcal{E}_3^{-}(G-H') \geq 7+(n-5-1)> n$, a contradiction. 

Thus we have that $S_i=\{u\}$. We then claim that there exists a vertex $c\in N(v)\setminus \{u\}$ that is not adjacent to $u'$. Otherwise, since $u'$ is adjacent to all vertices of $N(v)\setminus \{u\}$, recalling that $G-G[\{u,u'\}\cup C_i]$ is a clique, $G-u$ is a union of two distinct cliques. By~\Cref{lem:UnionOfCliques},  $
\mathcal{E}_3^{-}(G)\geq n$, a contradiction. We consider two subcases. 

\begin{itemize}
    \item If $|C_i|\leq 2$, then $G_1:=G[\{u,u',c,v\}\cup C_i]$ is isomorphic to either $H_4$ (when $|C_i|=1$) or $H_5$ (when $|C_i|=2$), as shown in~\Cref{fig:H4H5inClaim3.9}. By~\Cref{table:3neg-energy},  $\mathcal{E}_3^{-}(H_4)\approx 6.447$ and $\mathcal{E}_3^{-}(H_5)\approx 8.026$; and so $\mathcal{E}_3^{-}(G_1)\geq |G_1|+1$. Since $G-G_1$ is obtained from $G-H$ by deleting two vertices $c$ and $v$, noting that $G-H$ is a clique, $G-G_1$ is connected. Hence, by~\Cref{thm: additivity}, $\mathcal{E}_3^{-}(G) \geq \mathcal{E}_3^{-}(G_1) + \mathcal{E}_3^{-}(G-G_1) \geq (|G_1|+1)+(n-|G_1|-1) = n$, a contradiction. 

    \item If $|C_i|\geq 3$, then $G_2:=G[\{u,u',v\}\cup C_i]$ is a graph consisting of a triangle $abv$ together with a clique of size at least $4$ containing $a$ such that $\{v,u'\}$ is not connected to any other vertex of this clique. Thus $|G_2|\geq 6$. By~\Cref{lem:K_(n-2)+K_2}, $\mathcal{E}_3^{-}(G_2)\geq |G_2|+1$. Since $G-H$ is a clique, $G-G_2$ is also a clique. Hence, by~\Cref{thm: additivity}, $\mathcal{E}_3^{-}(G)\geq\mathcal{E}_3^{-}(G_2)+\mathcal{E}_3^{-}(G-G_2)\geq (|G_2|+1)+(n-|G_2|-1)=n$, a contradiction.
\end{itemize}
This completes the proof of the claim.
\end{proof*}

\medskip
What remains is to show that  all these structures are ``vertex-disjoint". 

By~\Cref{claim:NonAdjacentP3}, $u\in N(v)$ in a $P_3$ of any type $i$ (for $i\in [3]$) is not adjacent to another $u'\in N(v)$; and thus such a vertex $u$ cannot be in any subgraph of the form $G[S_i\cup C_i]$ or cliques formed by vertices not adjacent to any of $C_i$'s. There are two other kinds of $P_3$'s: $G[\{u,u',a\}]$ and $G[\{a,a', a''\}]$ where $u,u'\in N(v)$ and $a', a', a''$ are vertices not adjacent to any vertex of any $C_i$. See~\Cref{fig:G-v}. 

For $G[\{u,u',a\}]$ isomorphic to $P_3$ with $ua\in E(G)$, we apply~\Cref{claim:last claim} to $K=G[\{S_{i_a}\}\cup \{C_{i_a}\}]$ with $S_{i_a}$ containing $u$ and $C_{i_a}$ containing $a$. Then we know that $|S_{i_a}|=1$, $|C_{i_a}|=1$, and $u'$ is an isolated vertex in $G-v$. Hence, it follows from \Cref{claim:last claim} that neither $u$ nor $u'$ is adjacent to any other $u''\in N(v)$; thus, neither $u$ nor $u'$ is in any subgraph of the form $G[S_i\cup C_i]$ or cliques formed by vertices not adjacent to any of $C_i$'s.

For $G[\{a,a', a''\}]$ isomorphic to $P_3$ with $aa', a'a''\in E(G)$, recall that $a', a', a''$ are vertices not adjacent to any vertex of any $C_i$. Then none of $a,a',$ and $a''$ is in any clique of the form $G[S_i\cup C_i]$. Moreover, by~\Cref{claim:non-S_i-vertices}, they are not in any clique formed by vertices not adjacent to any of $C_i$'s.

Therefore, each clique in $G-v$ is vertex disjoint from any kind of induced $P_3$ in $G-v$.

By~\Cref{claim:non-S_i-vertices}, cliques in $N(v)$ are vertex disjoint; and by~\Cref{claim:last claim}, cliques of the form $G[S_i\cup C_i]$ are pairwise vertex disjoint. If a vertex $u\in N(v)$ is contained in both of $G[S_i\cup C_i]$ for some $i$ and a clique $X$ (with $|X|\geq 2$) induced only by vertices in $N(v)$, then $u$ is adjacent to another vertex $u'\in V(X)\setminus \{u\}$, a contradiction to~\Cref{claim:last claim}. 

We now conclude that $G-v$ is a disjoint union of some $P_3$'s and some cliques. By~\Cref{lem:P3+cliques}, $\mathcal{E}_3^{-}(G) \geq n$, a contradiction. We complete the proof of the main theorem.  
\end{proof}

\section*{Acknowledgments}
Z. Wang is partially supported by National Natural Science Foundation of China (No. 12301444) and the Fundamental Research Funds for the Central Universities, Nankai University.
X.-D. Zhang is partially supported by  the National Natural Science Foundation of China (Nos. 12371354, 12161141003) and Science and Technology Commission of Shanghai Municipality (No. 22JC1403600), and the Fundamental Research Funds for the Central Universities. 

\bibliographystyle{plain}  
\bibliography{reference}

\newpage
\section*{Appendix}

\noindent
{\bf \Cref*{lem:K_{n-1}+v}.}
\emph{Let $n$ be a positive integer with $n\geq 4$. If $G$ is a graph formed from $K_{n-1}$ by attaching a pendant vertex to one of its vertices, then $\mathcal{E}_3^{-}(G)\geq n$.}

\begin{proof}
Assume that $n\geq 4$. 
Let $V(G)=\{v_1,v_2,\ldots,v_{n-1},u\},$
where $G[\{v_1,\ldots,v_{n-1}\}]$ is isomorphic to $K_{n-1}$ and $u$ is a pendant vertex adjacent only to $v_1$.
We consider the vertex partition $\mathcal{P}=\{ \{u\},\{v_1\},\{v_2,\ldots,v_{n-1}\} \}.$
Let $M_{\mathcal{P}}$ denote the corresponding quotient matrix of $G/\mathcal{P}$. We have that 
$$
M_{\mathcal{P}}=
\begin{pmatrix}
0 & 1 & 0\\
1 & 0 & n-2\\
0 & 1 & n-3
\end{pmatrix}.
$$ A direct computation yields
$$f_n(\lambda):=\det(\lambda I-M_\mathcal{P})
=\lambda^3-(n-3)\lambda^2-(n-1)\lambda+(n-3).$$
This cubic polynomial has exactly one negative root $\lambda_0$. 

Observe that the induced subgraph $G[\{v_2,\ldots,v_{n-1}\}]$ is isomorphic to $K_{n-2}$. Hence, $-1$ is an eigenvalue of the adjacency matrix of $G$ with multiplicity $n-3$.
Therefore, the negative eigenvalues of $G$ are $-1$ (with multiplicity $n-3$) and $\lambda$.
Consequently, $$\mathcal{E}_3^{-}(G)=n-3+|\lambda|^3.$$
It suffices to show that $|\lambda_0|^3\geq 3$ for $n\geq 4$.

On the one hand, $f_n(-\sqrt[3]{3})= (\sqrt[3]{3} - \sqrt[3]{9} + 1)n + 3\sqrt[3]{9} - \sqrt[3]{3} - 6,$ where $\sqrt[3]{3} - \sqrt[3]{9} + 1\approx 0.362>0$. So $f_n(-\sqrt[3]{3})$ increases as $n$ increases. In particular, we note that $f_4(-\sqrt[3]{3})=3\sqrt[3]{3} - \sqrt[3]{9} - 2 \approx 0.246 > 0$. So $f_n(-\sqrt[3]{3})>0$ for every $n\geq 4$.

On the other hand, for a fixed $n\geq 4$, $\lim_{\lambda \to -\infty} f_n(\lambda) = -\infty.$ By continuity, the negative root $\lambda_0$ of $f(\lambda)$ satisfies
$\lambda_0 < -\sqrt[3]{3},$
which implies $|\lambda_0| > \sqrt[3]{3}$. Therefore, $|\lambda_0|^3 > 3$. This completes the proof.
\end{proof}

\noindent
{\bf \Cref*{lem:K_(n-2)+K_2}.}
\emph{Let $n$ be a positive integer with $n\geq 6$. If $G$ is a graph formed from $K_{n-2}$ by adding an extra edge $uv$ and connecting $u$ and $v$ to one same vertex of $K_{n-2}$, then $\mathcal{E}_3^{-}(G)>n+1$.}

\begin{proof}
Let $y$ be the vertex of $K_{n-2}$ adjacent to both $u$ and $v$.
We consider the vertex partition $
\mathcal{P}=\{\{u\},\{v\},\{y\},V(K_{n-2})\setminus\{y\}\}.$
This partition is equitable, and let $M_{\mathcal{P}}$ denote the corresponding quotient matrix. We have that  
$$M_{\mathcal{P}}=
\begin{pmatrix}
0 & 1 & 1 & 0\\
1 & 0 & 1 & 0\\
1 & 1 & 0 & n-3\\
0 & 0 & 1 & n-4
\end{pmatrix}.$$ 
A direct computation gives
$$f_n(\lambda):=\det(\lambda I-M_\mathcal{P})
=\lambda^4-(n-4)\lambda^3-n\lambda^2+(3n-14)\lambda+(3n-11).$$
One may check that $f_n(-1)=0$ and the derivative $f'_n(-1)=2n-6>0$ for $n\geq 6$, so $-1$ is a simple root of $f(\lambda)$. Since $G[V(G)\setminus\{y\}]\cong K_{n-3}\cup K_2$, the eigenvalue $-1$ appears in $\operatorname{Spec}(G)$ with multiplicity $n-3$. Thus the negative eigenvalues of $G$ consist of $-1$ (with multiplicity $n-3$) and one additional eigenvalue $\lambda_0$ which is the negative root of $f(\lambda)$. Hence,
$$\mathcal{E}_3^{-}(G)=n-3+|\lambda_0|^3.$$
It suffices to show that $|\lambda_0|^3>4$ for $n\geq 6$.

On the one hand, $f_n(-\sqrt[3]{4})=(7-\sqrt[3]{4^2}-3\sqrt[3]{4})n+18\sqrt[3]{4}-27$, where $7-\sqrt[3]{4^2}-3\sqrt[3]{4}<0$. So the function $f_n(-\sqrt[3]{4})$ (in terms of $n$) decreases as $n$ increases. In particular, we note that $f_6(-\sqrt[3]{4})=15-6\sqrt[3]{16}\approx -0.118  <0$. So $f_n(-\sqrt[3]{4})<0$ for every $n\geq 6$.

On the other hand, for a fixed $n\geq 6$, $\lim_{\lambda \to -\infty} f_n(\lambda) = +\infty.$ By continuity, the negative root $\lambda_0$ of $f(\lambda)$ satisfies $\lambda_0 < -\sqrt[3]{4},$ which implies $|\lambda_0| > \sqrt[3]{4}$. Therefore, $|\lambda_0|^3 > 4$. This completes the proof.
\end{proof}

\noindent
{\bf \Cref*{lem:star-plus}.}
\emph{Let $n$ be a positive integer. If $G$ is a graph formed from $K_{1,n}$ by subdividing $t$ edges each once, then $\operatorname{Spec}(G)=\{\pm\sqrt{x_1},\pm\sqrt{x_2},0^{n-t-1},1^{t-1},(-1)^{t-1}\}$, where $x_1$ and $x_2$ are the roots of $x^2-(n+1)x+(n-t)=0$ with $x_1>x_2>0$.}

\begin{proof}
Let $v$ be the center of the original $K_{1,n}$.
Let $P$ be the set of $t$ leaves whose incident edges are subdivided,
$Q$ be the set of the remaining $n-t$ leaves, and
$R$ be the set of those subdivided vertices.
We consider the vertex partition
$\mathcal{P}=\{\{v\},P,R,Q\}.$
This partition is equitable, and let $M_{\mathcal{P}}$ denote the corresponding quotient matrix. We have that 
$$
A(G)=
\begin{pmatrix}
0 & 1 &\cdots & 1 & 0 & \cdots & 0 & 1 &\cdots & 1\\
1 & 0 &\cdots & 0 & 1 & \cdots & 0 & 0 &\cdots & 0\\
\vdots & \vdots & \ddots & \vdots & \vdots & \ddots & \vdots 
& \vdots & \ddots & \vdots \\
1 & 0 &\cdots & 0 & 0 & \cdots & 1 & 0 &\cdots &0\\
0 & 1 &\cdots & 0 & 0 & \cdots & 0 & 0 &\cdots &0\\
\vdots &\vdots & \ddots & \vdots & \vdots & \ddots & \vdots & \vdots & \ddots & \vdots \\
0 & 0 & \cdots & 1 & 0 &\cdots & 0 & 0 &\cdots &0\\
1 & 0 &\cdots & 0 & 0 & \cdots & 0 & 0 &\cdots &0\\
\vdots & \vdots & \ddots & \vdots & \vdots & \ddots & \vdots & \vdots & \ddots & \vdots \\
1 & 0 &\cdots & 0 & 0 & \cdots & 0& 0 &\cdots & 0\\
\end{pmatrix}
\text{~~and~~} M_{\mathcal{P}}=
\begin{pmatrix}
0 & t & 0 & n-t\\
1 & 0 & 1 & 0\\
0 & 1 & 0 & 0\\
1 & 0 & 0 & 0
\end{pmatrix}.$$
A direct computation yields
$$\det(\lambda I-M_{\mathcal{P}})
=\lambda^4-(n+1)\lambda^2+(n-t).$$
This polynomial has four roots, denoted by
$\pm \sqrt{x_1}$ and $\pm \sqrt{x_2},$ where $x_1$ and $x_2$ are the two roots of 
$x^2-(n+1)x+(n-t)=0$ with $x_1>x_2>0$.

Moreover, for the adjacency matrix $A(G)$, we may observe that $$(0,\ldots,0,1,-1,0,\ldots,0), (0,\ldots,0,1,0,-1,\ldots,0),      \ldots, \text{~and~} (0,\ldots,0,1,0,0,\ldots,-1))$$ (with the value $1$ fixed in the $2t+2$-th coordinate) are the eigenvectors of the eigenvalue $0$ (with multiplicity $n-t-1$), 
$$(0,1,-1,0,\ldots,0,1,-1,0,\ldots,0,0,\ldots,0),(0,1,0,-1,\ldots,0,1,0,-1,\ldots,0,0,\ldots,0),\ldots, $$ $$\text{~and~}(0,1,0,\ldots,-1,1,0,\ldots,-1,0,\ldots,0)$$ (with two $1$'s fixed in the second and the $t+2$-th coordinates) are the eigenvectors of the eigenvalue $1$ (with multiplicity $t-1$), and 
$$(0,1,-1,0,\ldots,0,-1,1,0,\ldots,0,0,\ldots,0),(0,1,0,-1,\ldots,0,-1,0,1,\ldots,0,0,\ldots,0),\ldots, $$ $$\text{~and~}(0,1,0,\ldots,-1,-1,0,\ldots,1,0,\ldots,0)$$ (with the first $1$ fixed in the second coordinate and the second $-1$ fixed in the $2t+2$-th coordinate) are the eigenvectors of the eigenvalue $-1$ (with multiplicity $t-1$). 

Therefore, $\operatorname{Spec}(G)=\{\pm\sqrt{x_1},\pm\sqrt{x_2},0^{\,n-t-1},1^{\,t-1},(-1)^{\,t-1}\}.$
\end{proof}
\end{document}